

\documentclass[12pt, leqno]{amsart}

\overfullrule=0pt
\setlength{\textwidth}{15.5cm} \setlength{\textheight}{21.6cm}
\setlength{\oddsidemargin}{0.0cm} \setlength{\evensidemargin}{0.0cm}

\usepackage{graphicx}
\usepackage{amssymb,amsmath,amsthm,amscd}
\usepackage{mathrsfs}
\usepackage{enumerate}
\usepackage[usenames,dvipsnames]{color}
\usepackage[colorlinks=true, pdfstartview=FitV,
 linkcolor=blue,citecolor=blue,urlcolor=blue]{hyperref}
\usepackage[all]{xy}

\usepackage{verbatim}
\usepackage{oldgerm}
\usepackage[normalem]{ulem}  
\usepackage{marginnote}
\usepackage{xspace}
\numberwithin{equation}{section}
\allowdisplaybreaks[4]


\hyphenation{di-men-sional}


\newcommand{\nc}{\newcommand}
\nc{\op}{\operatorname}

\theoremstyle{plain}
\newtheorem{lemma}{Lemma}[subsection]
\newtheorem{prop}[lemma]{Proposition}
\newtheorem{theorem}[lemma]{Theorem}
\newcommand{\Prop}{\begin{prop}}
\newcommand{\enprop}{\end{prop}}
\newcommand{\Lemma}{\begin{lemma}}
\newcommand{\enlemma}{\end{lemma}}
\newcommand{\Th}{\begin{theorem}}
\newcommand{\enth}{\end{theorem}}
\newtheorem{corollary}[lemma]{Corollary}
\newcommand{\Cor}{\begin{corollary}}
\newcommand{\encor}{\end{corollary}}
\newtheorem{definition}[lemma]{Definition}

\newcommand{\Def}{\begin{definition}}
\newcommand{\edf}{\end{definition}}
\newtheorem{sublemma}[lemma]{Sublemma}
\newcommand{\Sublemma}{\begin{sublemma}}
\newcommand{\ensub}{\end{sublemma}}

\theoremstyle{definition}
\newtheorem{remark}[lemma]{Remark}

\newtheorem{Convention}[lemma]{Convention}
\newcommand{\Conv}{\begin{Convention}}
\newcommand{\enconv}{\end{Convention}}
\newcommand{\Rem}{\begin{remark}}
\newcommand{\enrem}{\end{remark}}

\newcommand{\C}{\mathbb {C}}
\newcommand{\Q}{\mathbb {Q}}

\newcommand{\Z}{{\mathbb Z}}
\newcommand{\B}{{\mathbf{B}}}

\newcommand{\R}{{\rm R}}

\newcommand{\CC}{{\mathscr{C}}}
\newcommand{\one}{{\bf{1}}}
\newcommand{\seteq}{\mathbin{:=}}

\newcommand{\hd}{{\operatorname{hd}}}

\newcommand{\g}{{\mathfrak{g}}}

\newcommand{\Hom}{\operatorname{Hom}}

\newcommand{\isoto}[1][]{\mathop{\xrightarrow%
[{\raisebox{.3ex}[0ex][.3ex]{$\scriptstyle{#1}$}}]%
{{\raisebox{-.6ex}[0ex][-.6ex]{$\mspace{2mu}\sim\mspace{2mu}$}}}}}
\newcommand{\tensor}{\otimes}

\newenvironment{myequation}
{\relax\setlength{\arraycolsep}{1pt}\begin{eqnarray}}
{\end{eqnarray}}
\newenvironment{myequationn}
{\relax\setlength{\arraycolsep}{1pt}\begin{eqnarray*}}
{\end{eqnarray*}}

\nc{\eq}{\begin{myequation}}
\nc{\eneq}{\end{myequation}}
\nc{\eqn}{\begin{myequationn}}
\nc{\eneqn}{\end{myequationn}}
\newcommand{\on}{\operatorname}

\newcommand{\bni}{\be[{\rm(i)}]}
\newcommand{\bna}{\be[{\rm(a)}]}

\newcommand{\QED}{\end{proof}}
\newcommand{\Proof}{\begin{proof}}

\newcommand{\soplus}{\mathop{\mbox{\normalsize$\bigoplus$}}\limits}

\newcommand{\To}[1][{\hs{2ex}}]{\xrightarrow{\,#1\,}}
\newcommand{\id}{\on{id}}
\newcommand{\ba}{\begin{array}}
\newcommand{\ea}{\end{array}}

\newcommand{\bi}{\begin{enumerate}[{\rm(i)}]}

\newcommand{\monoto}{\rightarrowtail}

\newcommand{\set}[2]{\left\{#1 \mathbin{;} #2 \right\}}

\newcommand{\Mod}{\operatorname{Mod}}
\newcommand{\Modg}{\operatorname{{Mod}_{\mathrm{gr}}}}
\newcommand{\hs}{\hspace*}

\newcommand{\eqsub}{\begin{subequations}\begin{eqnarray}}
\newcommand{\eneqsub}{\end{eqnarray}\end{subequations}}

\newcommand{\ol}{\overline}

\nc{\univ}{\mathrm{univ}}
\nc{\la}{\lambda}
\nc{\lam}{\lambda}
\nc{\U}[1][\g]{U_q(#1)}
\nc{\te}{\tilde{e}}
\nc{\tei}{\tilde{e}_i}
\nc{\tf}{\tilde{f}}
\nc{\tfi}{\tilde{f}_i}
\nc{\tU}{\widetilde U_q(\g)}
\nc{\tE}{\tilde{E}}
\nc{\tF}{\widetilde{\F}}

\nc{\tk}{\tilde{k}}
\nc{\tkone}{\tk_{\ol{1}}}
\nc{\teone}{\tilde{e}_{\ol{1}}}
\nc{\tfone}{\tilde{f}_{\ol{1}}}

\nc{\teibar}{\tilde{e}_{\ol{i}}} \nc{\tfibar}{\tilde{f}_{\ol{i}}}
\nc{\tki}{{\tk}_{\ol {i}}}

\nc{\BZ}{{\mathbb{Z}}}
\nc{\al}{\alpha}
\nc{\qs}{{q_s}}
\nc{\qt}{{q_t}} 
\nc{\pstar}{{p^*}} 
\nc{\lan}{\langle}
\nc{\ran}{\rangle}
\nc{\re}{{\mathrm{re}}}
\nc{\wt}{\operatorname{wt}}
\nc{\ch}{\operatorname{ch}}
\nc{\Uf}[1][\g]{U^-_q(#1)}
\nc{\Ue}{U^+_q(\g)}
\nc{\eps}{\varepsilon}
\nc{\vphi}{\varphi}
\nc{\sphi}{\varphi^*}
\nc{\seps}{\varepsilon^*}

\nc{\nn}{\nonumber}

\nc{\vp}{\varpi}
\nc{\cls}{{\operatorname{cl}}}
\nc{\Wt}{{\operatorname{Wt}}}
\nc{\Us}{U'_q(\g)}
\nc{\La}{\Lambda}
\nc{\ro}{{\rm(}}
\nc{\rf}{{\rm)}}
\nc{\norm}{{\mathrm{norm}}}
\nc{\qbox}{\quad\mbox}
\nc{\braid}{{\mathfrak{B}}}
\nc{\Ad}{\operatorname{Ad}}
\nc{\Aut}{\operatorname{Aut}}
\nc{\dt}[1]{\tilde{\tilde #1}}
\nc{\Sn}{S^{{\mathrm{norm}}}}
\nc{\aff}{{\rm{aff}}}
\nc{\rk}{{\mathrm{rk}}}
\nc{\tP}{\widetilde{P}}
\nc{\tW}{\widetilde{W}}
\nc{\Dyn}{\mathrm{Dyn}}
\nc{\tD}{\widetilde{\Delta}}
\nc{\height}{{\operatorname{ht}}}
\nc{\bl}{\bigl(}
\nc{\br}{\bigr)}
\nc{\Hecke}{\mathrm{H}}
\nc{\HA}{\Hecke^{\mathrm{A}}}
\nc{\HB}{\Hecke^{\mathrm{B}}}
\newcommand{\scbul}{{\,\raise1pt\hbox{$\scriptscriptstyle\bullet$}\,}}
\nc{\vac}{{\phi}}
\nc{\Bt}{\B_\theta(\g)}
\nc{\be}{\begin{enumerate}}
\nc{\ee}{\end{enumerate}}
\nc{\low}{{\mathrm{low}}}
\nc{\upper}{{\mathrm{up}}}
\nc{\Zodd}{\Z_{\mathrm{odd}}}
\nc{\KA}{\on{K}^{\mathrm{A}}}
\nc{\KB}{\on{K}^{\mathrm{B}}}
\nc{\Res}{\on{Res}}
\nc{\Fc}[1][{n,m}]{\mathbf{F}_{#1}}
\nc{\tphi}{\tilde{\varphi}}
\nc{\CO}{\mathscr{O}}
\nc{\inte}{\mathrm{int}}
\nc{\Oint}{\mathcal{O}^{\ge0}_{\inte}}
\nc{\vs}{\vspace}
\nc{\tL}{\widetilde{L}}
\nc{\tu}{\tilde{u}}
\nc{\noi}{\noindent}
\nc{\heigh}{\mathfrak{t}}
\nc{\lowest}{\mathfrak{l}}
\nc{\rootl}{\mathsf{Q}}
\nc{\cl}{{\rm{cl}}}
\nc{\af}{{\rm{af}}}
\nc{\uqpg}{U'_q(\mathfrak g)}
\nc{\Oh}{\widehat{\mathcal{O}}}

\nc{\hV}{\widehat{V}}

\nc{\rd}{{}^*\mspace{-1mu}}

\newenvironment{rouge}
{\color{red}}
{}

\newcommand{\berm}{\begin{rouge}{}\marginnote{\fbox{\scshape\lowercase{M}}}{}}
\newcommand{\bermh}{\begin{rouge}{}\marginnote{\fbox{\scshape\lowercase{MH}}}{}}

\nc{\cmtm}[1]{\fbox{\ber{\rm M:}{\em{#1}}\er}}
\nc{\cmtmh}[1]{\fbox{\ber{\rm MH:}{\em{#1}}\er}}
\newenvironment{bleu}
{\color{blue}}
{}

\nc{\beb}{\begin{bleu}}
\nc{\eb}{\end{bleu}}

\nc{\KLR}{quiver Hecke algebra}
\nc{\KLRs}{quiver Hecke algebras}
\nc{\cor}{\mathbf{k}}
\nc{\cora}{{\cor(A)}}
\nc{\haut}{\mathrm{ht}}
\nc{\tens}{\mathop\otimes}
\nc{\gmod}{\mbox{-$\mathrm{gmod}$}}
\nc{\proj}{\mbox{-$\mathrm{proj}$}}
\nc{\gproj}{\mbox{-$\mathrm{gproj}$}}
\nc{\smod}{\mbox{-$\mathrm{mod}$}}
\nc{\nmod}{\mbox{-$\mathrm{nilmod}$}}

\nc{\h}{\mathfrak h}
\nc{\Rnorm}{R^{\rm{norm}}}
\nc{\Runiv}{R^{\rm{univ}}}
\nc{\Rren}{R^{\rm{ren}}} 
\nc{\Vhat}{\widehat{V}}
\nc{\F}{\mathcal{F}}
\def\AA{{\mathcal A}}

\def\T{{\mathcal T}}

\nc{\fd}[1][A]{\on{\mathrm{flat.dim}_{#1}}}
\nc{\bP}{{\mathbb{P}}}
\nc{\bPh}{\widehat{\mathbb{P}}}
\nc{\bK}{\widehat{\mathbb{K}}}
\nc{\bV}[1][{n}]{\widehat{V}^{\otimes{#1}}}
\nc{\bVK}[1][{n}]{\widehat{V}^{\otimes{#1}}_K}
\nc{\opp}{\mathrm{opp}}
\nc{\col}{\colon}
\nc{\bnum}{\be[{\rm(i)}]}
\nc{\oep}{\epsilon}
\nc{\qtext}{\quad\text}
\nc{\qtextq}[1]{\quad\text{#1}\quad}
\nc{\longtwoheadrightarrow}[1][]{\xymatrix{\ar@{->>}[r]^-{{#1}}&}}
\nc{\epiTo}[1][]{\longtwoheadrightarrow[{#1}]}
\nc{\epito}{\twoheadrightarrow}
\nc{\monoTo}[1][]{\xymatrix{\ar@{>->}[r]^-{{#1}}&}}
\nc{\sym}{\mathfrak{S}}
\nc{\inp}[1]{{({#1})_{\mathrm{n}}}}
\nc{\rtl}{\rootl}
\nc{\wtd}{\widetilde}
\nc{\etens}{\boxtimes}
\nc{\ds}[1]{\mathrm{d}(#1)}
\nc{\rmat}[1]{{\mathbf r}_{\mspace{-2mu}\raisebox{-.5ex}{${\scriptstyle{#1}}$}}}
\nc{\shc}{\mathcal{C}}
\nc{\Fct}{{\on{Fct}}}
\nc{\tC}{\widetilde{\shc}}
\nc{\Zp}{\Z_{\ge0}}
\nc{\tPhi}{\widetilde{\Phi}}
\nc{\tT}{{\tilde{\T}}}
\nc{\Ob}{\on{Ob}}
\nc{\bwr}{\mbox{\large$\wr$}}
\nc{\Img}{\on{Im}}
\nc{\Ab}{\mathcal{A}^{\mathrm{big}}}
\nc{\Sb}{\mathcal{S}^{\mathrm{big}}}
\nc{\As}{\mathcal{A}}
\nc{\Ss}{\mathcal{S}}
\nc{\ntens}{\widetilde{\otimes}}
\nc{\hR}{\widehat{R}}
\nc{\nconv}{\star}
\nc{\ts}{\tilde{s}}
\nc{\sho}{\mathcal{O}}
\nc{\bc}{\begin{cases}}
\nc{\ec}{\end{cases}}

\nc{\UA}{U_q'(A^{(1)}_{N-1})}
\nc{\UAtwo}{U_q'(A^{(2)}_{N-1})}
\nc{\KR}{R_K}
\nc{\cQ}{\mathcal{Q}}
\nc{\Irr}{\mathcal{I}rr}
\nc{\tQ}{\widetilde{\cQ}}
\nc{\bs}{\mathbf{s}}
\nc{\bL}{\mathbb{L}}
\nc{\KP}{{\mathrm{KP}}}
\nc{\db}{\mathsf{b}^*}
\nc{\bfa}{{\mathbf{a}}}
\nc{\bfc}{{\mathbf{c}}}
\nc{\Po}{(P_\cl)_0}
\renewcommand{\Im}{\op{Im}}
\nc{\mono}{\rightarrowtail}
\nc{\tr}{\on{tr}}
\nc{\K}{\on{K}}
\nc{\FQ}[1]{\F_Q^{(#1)}}
\nc{\bQ}{\ol{Q}}
\nc{\conv}{\mathbin{\mbox{\large $\circ$}}}
\nc{\uqm}{\uqpg\smod}

\nc{\Ft}{\mathcal{F}^{\rm{T}}}
\nc{\Pt}{P^{\rm{T}}}

\newcommand{\hconv}{\mathbin{\scalebox{.9}{$\nabla$}}}
\newcommand{\sconv}{\mathbin{\scalebox{.9}{$\Delta$}}}
\nc{\de}{\on{\textfrak{d}}}
\nc{\Rm}{R^{\mathrm{ren}}}
\nc{\ang}[1]{\lan{#1}\ran}
\nc{\ms}{\mspace}
\nc{\alj}{{\al^J}\ms{-3mu}}
\nc{\dcf}{duality coefficient\xspace}
\nc{\dcfs}{duality coefficients\xspace}
\nc{\snoi}{\smallskip\noindent}
\nc{\vpi}{\varpi}

\nc{\duality}{KLR-type quantum affine Schur-Weyl duality\ }

\newlength{\mylength}
\setlength{\mylength}{\textwidth}
\addtolength{\mylength}{-18ex}

\title[Quantum affine algebras of type A and B]{Monoidal categories of modules over quantum affine algebras of type A and B}

\author[Masaki Kashiwara]{Masaki Kashiwara}
\thanks{The research of M.\ Kashiwara
was supported by Grant-in-Aid for Scientific Research (B)
15H03608, Japan Society for the Promotion of Science.}
\address[Masaki Kashiwara]{Research Institute for Mathematical Sciences, Kyoto University,
Kyoto 606-8502, Japan \& Korea Institute for Advanced Study, Seoul 02455, Korea }
\email[Masaki Kashiwara]{masaki@kurims.kyoto-u.ac.jp}

\author[Myungho Kim]{Myungho Kim}
\address[Myungho Kim]{Department of Mathematics, Kyung Hee University, Seoul 02447, Korea}
\email[Myungho Kim]{mkim@khu.ac.kr}
\thanks{The research of M.\ Kim was supported by the National Research Foundation of
Korea(NRF) Grant funded by the Korea government(MSIP) (NRF-2017R1C1B2007824).}

\author[Se-jin Oh]{Se-jin Oh}
\thanks{ The research of S.-j.\ Oh was supported by the National Research Foundation of
Korea(NRF) Grant funded by the Korea government(MSIP) (NRF-2016R1C1B1010721).}
\address[Se-jin Oh]{Department of Mathematics, Ewha Womans University, Seoul 120-750, Korea}
\email[Se-jin Oh]{sejin092@gmail.com}

\keywords{Quantum affine algebra, Quiver Hecke algebra, Quantum group,
Schur-Weyl duality}

\subjclass[2010]{81R50, 16G, 16T25,17B37}

\begin{document}

\begin{abstract}
We construct an exact tensor functor from the category $\As$ of finite-dimensional graded modules over the quiver Hecke algebra of type $A_\infty$ 
to the category
$\mathscr C_{B^{(1)}_n}$ 
  of finite-dimensional integrable modules over the quantum affine algebra of type $B^{(1)}_n$. 
It factors through the category $\mathcal T_{2n}$, 
which is a localization of  $\As$.
As a result, this functor induces a ring isomorphism from the Grothendieck ring of $\mathcal T_{2n}$ (ignoring the gradings)
 to the Grothendieck ring of a subcategory $\mathscr C^{0}_{B^{(1)}_n}$ of $\mathscr C_{B^{(1)}_n}$.
 Moreover, it induces a bijection between the classes of simple objects. 
 Because the category $\mathcal T_{2n}$ is related to  categories  $\mathscr C^{0}_{A^{(t)}_{2n-1}}$ $(t=1,2)$  
 of the quantum affine algebras of type $A^{(t)}_{2n-1}$,
  we obtain an interesting connection between those categories of modules over quantum affine algebras of type $A$ and type $B$. 
  Namely, for each $t =1,2$,  there exists an isomorphism  between the Grothendieck ring of $\mathscr C^{0}_{A^{(t)}_{2n-1}}$  and the Grothendieck ring of $\mathscr C^{0}_{B^{(1)}_n}$, which induces a bijection between the classes of simple modules.
\end{abstract}

\maketitle
\tableofcontents

\section*{Introduction}
Let $\g$ be a Kac-Moody algebra of affine type and  let $\uqpg$ be the corresponding quantum affine algebra. 
Since the category  $\CC_\g$ of finite-dimensional integrable representations of $\uqpg$  has a rich structure, 
 it has been extensively investigated with various approaches (see for example, \cite{CP95,FR99,Her101,Kas02,Nak041}). 
In particular, when $\uqpg$ is the quantum affine algebra of type $A_{N-1}^{(1)}$, 
 there is a  functor, so called the  \emph{quantum affine Schur-Weyl duality functor}, 
  from the category of finite-dimensional modules over the affine Hecke algebra  to the category $\CC_{A_{N-1}^{(1)}}$ (\cite{CP96B, Che, GRV94}).
The \emph{KLR-type quantum affine Schur-Weyl duality}, which is introduced and developed in \cite{KKK13A, KKK13B, KKKO15, KKKO16, KO17},  is a wide generalization of the quantum affine Schur-Weyl duality.  
It provides a general procedure to obtain a functor 
  from the category of modules over a symmetric quiver Hecke algebra to the category $\CC_\g$ for the quantum affine algebra $\uqpg$ of \emph{arbitrary type}. 
Recall that a symmetric quiver Hecke algebra, also called a symmetric Khovanov-Lauda-Rouquier algebra, is a family of graded  algebras associated with
a quiver without loops. 
It is introduced as a generalization of the affine Hecke algebra in the context of categorification of quantum groups ({\cite{KL09, R08}}). 
In the present paper, we consider the quantum  affine algebra of type $B_n^{(1)}$ and 
we apply the general procedure of the  \duality  in order to obtain a functor from 
the category of finite-dimensional graded modules over the symmetric quiver Hecke algebra of type $A_\infty$ to the category $\CC_{B_n^{(1)}}$. 
As a consequence, we have a close relation between the categories of finite-dimensional integrable modules over quantum affine algebras of type $A$ and that of type $B$.

\medskip
For a detailed explanation of  our results, let us briefly recall the construction of \duality functors.
Suppose that an index set $J$  and a family $\{(V_j)_{X(j)}\}_{j \in J}$ of evaluation modules of good modules over $\uqpg$ are given. 
Then one can form a quiver $\Gamma^J$ such that the set of vertices is $J$ and the set of arrows is given by the order
of poles of the normalized $R$-matrices between the modules in the family. 
Then  one obtains a symmetric quiver Hecke algebra $R^J$ 
associated with the quiver $\Gamma^J$, 
and  constructs 
a tensor functor from the category of finite-dimensional graded modules over $R^J$ to the category $\CC_\g$. 
We call it the \emph{\duality functor}.

The case when $\g$ is of type $A_{N-1}^{(1)}$ and the family is given as $\{V(\vp_1)_{q^{2k}}\}_{k\in \Z}$ is thoroughly analyzed  in \cite{KKK13A}.
In this cases, the quiver $\Gamma^J$ is of type $A_\infty$ with the set of vertices $J=\Z$ and hence the corresponding  symmetric quiver Hecke algebra $R^J$ is of type $A_\infty$. 
Let $\As$ be the category of finite-dimensional graded modules over the symmetric quiver Hecke algebra of type $A_\infty$.
One of the main features of this case is that there exists a localization $\mathcal T_N$ of the category $\As$ such that  the duality functor factors through the category $\mathcal T_N$. 
Moreover, the category $\mathcal T_N$, 
as well as $\CC_{A_{N-1}^{(1)}}$, is a rigid tensor category. 
If we consider the smallest Serre subcategory $\CC_{A_{N-1}^{(1)}}^0$ of $\CC_{A_{N-1}^{(1)}}$  which is stable under taking tensor products and 
contains a sufficiently large family of fundamental representations in $\CC_{A_{N-1}^{(1)}}$ (for the precise definition see, Subsection \ref{subsec:HL category}), then the duality functor induces a ring isomorphism between the Grothendieck ring  $K(\mathcal T_N)_{q=1}$ and the Grothendieck ring $K(\CC_{A_{N-1}^{(1)}}^0)$ of the category $\CC_{A_{N-1}^{(1)}}^0$. 
Here the ring $K(\mathcal T_N)_{q=1}$ is obtained from the Grothendieck ring $K(\mathcal T_N) $ of the category $\mathcal T_N$ by ignoring the gradings.
Furthermore, the functor induces a bijection between 
 the sets of the classes of simple objects.

 In \cite{KKKO15},  it is shown  that the category $\mathcal T_N$ is related not only to the category $\CC_{A_{N-1}^{(1)}}^0$ but also to the category $\CC_{A_{N-1}^{(2)}}^0$ of the twisted quantum affine algebra $U_q\rq{} (A_{N-1}^{(2)})$ in a similar fashion. 
 More precisely, there is a dimension-preserving bijection between the fundamental representations in the category $\CC_{A_{N-1}^{(1)}}^{0}$ and those in the category $\CC_{A_{N-1}^{(2)}}^{0}$ 
which is induced from the \duality functors from the category $\As$ to the category $\CC_{A_{N-1}^{(t)}}^{0}$ $(t=1,2)$. Here the \duality functor
arises from the choice of the family $\{V(\vp_1)_{q^{2k}}\}_{k\in \Z}$ in $\CC_{A_{N-1}^{(t)}}^{0}$.
Moreover, this duality functor also factors through the category $\mathcal T_N$ and it induces a ring isomorphism between the Grothendieck rings  $K(\mathcal T_N)_{q=1}$  and $K(\CC_{A_{N-1}^{(2)}}^0)$, which is a bijection on the sets of classes of simple objects. 
Hence, in conclusion, the three Grothendieck rings $K(\CC_{A_{N-1}^{(1)}}^{0})$, $K(\CC_{A_{N-1}^{(2)}}^{0})$ and $K(\mathcal T_N)_{q=1}$ are isomorphic and those isomorphisms induce bijections between the classes of the simple objects.

\medskip
The main result of the present paper is that there is an exact tensor functor from the category $\mathcal T_{2n}$ to the category $\CC_{B_{n}^{(1)}}^0$ and  it induces a ring isomorphism between the Grothendieck rings, which is a bijection between the sets of  classes of simple objects. 
An interesting point  is that  the  same category $\mathcal T_{2n}$ plays a role, even we move from the quantum affine algebra of type $A$ to the quantum affine algebra of type $B$.
To obtain such a functor, first we present a family $\{ (V_j)_{X(j)}\}_{j \in \Z}$ of $U_q\rq{}(B_n^{(1)})$-modules, where $V_j$ is either $V(\varpi_1)$  or $V(\varpi_n)$ and $X:\Z \to \Q(q)^\times$ is a function such that the corresponding quiver $\Gamma$ is of type $A_\infty$ (for  the precise definition, see Subsection \ref{subsec:quiver}).
Since  each module $(V_j)_{X(j)}$ belongs to the category $\CC_{B_{n}^{(1)}}^0$,  the corresponding  \duality functor  $\F$ is a functor from the category $\As$  to the category $\CC_{B_{n}^{(1)}}^0$. 
Even though the choice of the family $\{ (V_j)_{X(j)}\}_{j \in \Z}$ is more complicated  in comparison to the case of the types $A_{N-1}^{(t)}$ $(t=1,2)$ , 
the duality functor $\F$ enjoys several similar properties: Recall that for each pair  $a \le b$ of integers, there is a simple module $L(a,b)$ over the symmetric quiver Hecke algebra of type $A_\infty$.  Then the functor  $\F$ sends the module $L(a,b)$  to zero if and only if $b-a+1 > 2n$, and  it sends $L(a,b)$ with $b-a+1=2n$ to the trivial representation of $U_q \rq{}(B_n^{(1)})$.
It follows that the kernel of the functor $\F$ coincides with the ones of the duality functors for type $A^{(t)}_{2n-1}$ $(t=1,2)$.
Moreover, we show that the functor $\F$ factors through the category $\mathcal T_{2n}$ again. 
To see this, it is enough to show that there exists a family $\{c_{i,j}(u,v) \}_{i,j \in J}$ of power series in two variables  satisfying certain conditions. 
We provide a general argument to construct such a family (Subsection \ref{subsec:factoring}), which is applicable to other cases including the cases of type $A_{N-1}^{(t)}$ $(t=1,2)$. 
We  further show that the resulting functor $\F\rq{} : \mathcal T_{2n} \to \CC_{B_{n}^{(1)}}^0$  induces a ring isomorphism from $K(\mathcal T_{2n})_{q=1}$ to $K(\CC_{B_{n}^{(1)}}^0)$, which is bijective between the sets of classes of simple objects.

\vs{1.5ex}
An immediate but  surprising consequence of the main result is that for each $t=1,2$, there exists a ring isomorphism between $K(\CC_{A_{2n-1}^{(t)}}^0)$ and $K(\CC_{B_{n}^{(1)}}^0)$, which induces a bijection between the classes of simple modules.  
We believe that the correspondence here between the quantum affine algebra $U_q\rq{}(B_n^{(1)})$ and $U_q\rq{}(A_{2n-1}^{(2)})$ is related  to the fact that they are  Langlands dual to each other; i.e.,  their  Dynkin diagrams are obtained from each other by changing the directions of arrows. 
This speculation is supported heuristically by the studies on some small subcategories of $\CC_\g^{0}$. 
Recall that in \cite{KKK13B, KKKO16}, certain tensor subcategories $\CC_Q^{(t)}$ of $\CC_\g$, where  $\g=A_{2n-1}^{(t)}$ or $D_{n+1}^{(t)}$ $(t=1,2)$, are studied using the \duality functors and it turns out that for each $t=1,2$, there is a ring isomorphism between the Grothendieck rings $K(R^{\g_0} \gmod)_{q=1}$ and $K(\CC_Q^{(t)})$, where $R^{\g_0} \gmod$ is the category of finite-dimensional graded modules over the symmetric quiver Hecke algebra of type $\g_0= A_{2n-1}$ or $D_{n+1}$. 
It induces also a bijection between the sets of classes of simple modules. 
In \cite{KO17}, the Langlands dual version  $\CC_{\mathscr Q}$ of the category $\CC_{Q}$ is investigated. 
That is, there exist subcategories   $\CC_{\mathscr Q}$ of $\CC_\g$ where  $\g=B_n^{(1)}$ or $C_{n}^{(1)}$,  and there is a ring isomorphism between the Grothendieck rings $K(R^{\g_0} \gmod)_{q=1}$ and $K(\CC_{\mathscr Q})$, which induces a bijection between the sets of classes of simple modules.
As a consequence, we have ring isomorphisms between the Grothendieck rings $K(\CC_Q)$,  $K(\CC_{\mathscr Q})$, and $K(R^{\g_0} \gmod)_{q=1}$ which are bijective between the sets of classes of simple modules.
Thus the main result of the present paper can be regarded as a global version of \cite{KO17} in the case of type $A$ and type $B$ quantum affine algebras.

Recall that in  \cite{FH11A} and \cite{FR99}, some interesting connections between the categories of finite-dimensional integrable representations  over a quantum affine algebra and that of its  Langlands  dual are suggested and studied. Since the category $\CC_{\g}^{0}$ can be regarded as a skeleton of the category $\CC_{\g}$ of all the finite-dimensional integrable representations, 
the correspondence between the categories $\CC_{A_{2n-1}^{(2)}}^{0}$ and $\CC_{B_{n}^{(1)}}^{0}$ in the present paper  looks relevant to their works.
 It would be interesting to find  a connection between the result in this paper and those in  \cite{ FH11A, FR99}.

\medskip
This paper is organized as follows.
In the first section, we recall some necessary materials such as symmetric quiver Hecke algebras, quantum affine algebras, $R$-matrices and  \duality functors.
In the second section, we present and study the \duality functor $\F$ from  $\As$ to $\CC_{B_n^{(1)}}^{(0)}$. We show that $\F$ factors through the category $\mathcal T_{2n}$ and the resulting functor induces an isomorphism between the Grothendieck rings, which is a bijection between the sets of classes of simple objects.
In the last section, we record the images inside the category $\CC_{B_{n}^{(1)}}^{0}$ of the fundamental representations in $\CC_{A_{2n-1}^{(t)}}^{0}$ $(t=1,2)$ under the correspondence between the Grothendieck rings.

\section{Symmetric quiver Hecke algebras and quantum affine algebras} \label{sec:backgrounds}
\subsection{Cartan datum and quantum groups}\label{subsec:Cartan}
In this subsection, we recall the definition of quantum groups. Let $I$
be an index set. A \emph{Cartan datum} is a sextuple $\bl A,P,
\Pi,P^{\vee},\Pi^{\vee},(\cdot\,,\,\cdot)\br$ 
consisting of
\begin{enumerate}[(a)]
\item an integer-valued matrix $A=(a_{ij})_{i,j \in I}$,
called \emph{the symmetrizable generalized Cartan matrix},
 which satisfies
\bni
\item $a_{ii} = 2$ $(i \in I)$,
\item $a_{ij} \le 0 $ $(i \neq j)$,
\item $a_{ij}=0$ if $a_{ji}=0$ $(i,j \in I)$,
\end{enumerate}
\item a free abelian group $P$, called the \emph{weight lattice},
\item $\Pi= \{ \alpha_i \in P \mid \ i \in I \}$, called
the set of \emph{simple roots},
\item $P^{\vee}\seteq\Hom(P, \Z)$, called the \emph{co-weight lattice},
\item $\Pi^{\vee}= \{ h_i \ | \ i \in I \}\subset P^{\vee}$, called
the set of \emph{simple coroots},
\item a symmetric $\Q$-valued bilinear form $(\cdot\,,\,\cdot)$ on $P$,
\end{enumerate}

\noindent
satisfying the following properties:
\bnum
\item $\langle h_i,\alpha_j \rangle = a_{ij}$ for all $i,j \in I$,
\item $\Pi$ is linearly independent,
\item for each $i \in I$, there exists $\Lambda_i \in P$ such that
           $\langle h_j, \Lambda_i \rangle =\delta_{ij}$ for all $j \in I$,
\item  $(\al_i,\al_i)\in\Q_{>0}$ for any $i\in I$, 
\item for any $\lambda\in P $ and $i \in I$,
one has
$$\lan h_i,\lambda\ran=
\dfrac{2(\alpha_i,\lambda)}{(\alpha_i,\alpha_i)}.$$ 
\ee
We call $\Lambda_i$ the \emph{fundamental weights}.
The free abelian group $\rootl\seteq\soplus_{i \in I} \Z \alpha_i$ is called the
\emph{root lattice}. Set $\rootl^{+}= \sum_{i \in I} \Z_{\ge 0}
\alpha_i\subset\rootl$ and $\rootl^{-}= \sum_{i \in I} \Z_{\le0}
\alpha_i\subset\rootl$. For $\beta=\sum_{i\in I}m_i\al_i\in\rootl$,
we set
$|\beta|=\sum_{i\in I}|m_i|$.

Let $d$  be the smallest positive integer such that $d \dfrac{(\alpha_i,\alpha_i)}{2} \in \Z$ for all $i \in I$.
Let $q$ be an indeterminate. For each $i \in I$, set $q_i = q^{\frac{(\alpha_i,\alpha_i)}{2}}\in \Q(q^{1/d})$. 

\begin{definition} \label{def:qgroup}
The {\em quantum group} $U_q(\g)$  associated   with  a Cartan datum
$\bl A,  P,\Pi,P^{\vee}, \Pi^{\vee},(\cdot\,,\,\cdot)\br$ 
is the  algebra  over $\mathbb
Q(q^{1/d})$ generated by $e_i,f_i$ $(i \in I)$ and $q^{h}$ $(h \in
d^{-1}P^\vee)$ satisfying following relations:
\begin{equation*}
\begin{aligned}
& q^0=1,\quad q^{h} q^{h'}=q^{h+h'}\ \text{for $h,h' \in d^{-1} P^\vee$,}\\
& q^{h}e_i q^{-h}= q^{\lan h, \alpha_i\ran} e_i, \quad
        q^{h}f_i q^{-h} = q^{-\lan h, \alpha_i\ran} f_i \quad
        \text{for $ h \in d^{-1}P^\vee$, $i \in I$,} \\
& e_if_j - f_je_i = \delta_{ij} \dfrac{K_i -K^{-1}_i}{q_i- q^{-1}_i
}\quad\text{where $K_i=q^{\frac{(\alpha_i,\alpha_i)}{2} h_i}$, }\\
& \sum^{1-a_{ij}}_{r=0} (-1)^r
        e^{(1-a_{ij}-r)}_i e_j e^{(r)}_i
 =\sum^{1-a_{ij}}_{r=0} (-1)^r
        f^{(1-a_{ij}-r)}_if_j f^{(r)}_i=0 \quad \text{ if } i \ne j.
\end{aligned}
\end{equation*}
\end{definition}

Here, we set $[n]_i =\dfrac{ q^n_{i} - q^{-n}_{i} }{ q_{i} - q^{-1}_{i} },\quad
  [n]_i! = \prod^{n}_{k=1} [k]_i$, $e_i^{(m)}=\dfrac{e_i^n}{[n]_i!}$ and $f_i^{(n)}=\dfrac{f_i^n}{[n]_i!}$
  for all $n \in \Z_{\ge 0}$, $i \in I$.

\subsection{Quiver Hecke algebras}
We recall the definition of \KLRs\ associated with a given
Cartan datum $\bl A, P, \Pi, P^{\vee}, \Pi^{\vee},(\cdot\,,\,\cdot)\br$
such that $(\al_i,\al_i)\in2\Z$ for all $i\in I$. 

Let $\cor$ be a commutative ring.
 Let us take  a family of polynomials $(Q_{i,j})_{i,j\in I}$ in $\cor[u,v]$
which satisfies
\eq
&&\left\{\parbox{\mylength}{
\bna
\item $Q_{i,j}(u,v)=Q_{j,i}(v,u)$ for any $i,j\in I$,
\item $Q_{i,j}(u,v)=0$ for $i=j\in I$,
\item for $i\not=j$, we have
$$
Q_{i,j}(u,v) =
\sum\limits_{ \substack{ (p,q)\in \Z^2_{\ge0} 
\\ (\al_i , \al_i)p+(\al_j , \al_j)q=-2(\al_i , \al_j)}}
t_{i,j;p,q} u^p v^q
$$
with $t_{i,j;p,q}\in\cor$, $t_{i,j;p,q}=t_{j,i;q,p}$ and $t_{i,j:-a_{ij},0} \in \cor^{\times}$. \ee}\right.
\label{eq:Q}
\eneq

We denote by
$\sym_{n} = \langle s_1, \ldots, s_{n-1} \rangle$ the symmetric group
on $n$ letters, where $s_i\seteq (i, i+1)$ is the transposition of $i$ and $i+1$.
Then $\sym_n$ acts on $I^n$ by place permutations.

For $n \in \Z_{\ge 0}$ and $\beta \in \rootl^+$ such that $|\beta| = n$, we set
$$I^{\beta} = \set{\nu = (\nu_1, \ldots, \nu_n) \in I^{n}}%
{ \alpha_{\nu_1} + \cdots + \alpha_{\nu_n} = \beta }.$$

\begin{definition}
For $\beta \in \rootl^+$ with $|\beta|=n$, the {\em
quiver Hecke algebra}  $R(\beta)$  at $\beta$ associated
with a Cartan datum $\bl A,P, \Pi,P^{\vee},\Pi^{\vee},(\cdot\,,\,\cdot)\br$ 
 and a matrix
$(Q_{i,j})_{i,j \in I}$ is the $\cor$-algebra generated by
the elements $\{ e(\nu) \}_{\nu \in  I^{\beta}}$, $ \{x_k \}_{1 \le
k \le n}$, $\{ \tau_m \}_{1 \le m \le n-1}$ satisfying the following
defining relations:

\begin{align*} 
& e(\nu) e(\nu') = \delta_{\nu, \nu'} e(\nu), \ \
\sum_{\nu \in  I^{\beta} } e(\nu) = 1, \allowdisplaybreaks\\
& x_{k} x_{m} = x_{m} x_{k}, \ \ x_{k} e(\nu) = e(\nu) x_{k}, \allowdisplaybreaks\\
& \tau_{m} e(\nu) = e(s_{m}(\nu)) \tau_{m}, \ \ \tau_{k} \tau_{m} =
\tau_{m} \tau_{k} \ \ \text{if} \ |k-m|>1, \allowdisplaybreaks\\
& \tau_{k}^2 e(\nu) = Q_{\nu_{k}, \nu_{k+1}} (x_{k}, x_{k+1})
e(\nu), \allowdisplaybreaks\\
& (\tau_{k} x_{m} - x_{s_k(m)} \tau_{k}) e(\nu) = \begin{cases}
-e(\nu) \ \ & \text{if} \ m=k, \nu_{k} = \nu_{k+1}, \\
e(\nu) \ \ & \text{if} \ m=k+1, \nu_{k}=\nu_{k+1}, \\
0 \ \ & \text{otherwise},
\end{cases} \allowdisplaybreaks\\
& (\tau_{k+1} \tau_{k} \tau_{k+1}-\tau_{k} \tau_{k+1} \tau_{k}) e(\nu)\\
& =\begin{cases} \dfrac{Q_{\nu_{k}, \nu_{k+1}}(x_{k},
x_{k+1}) - Q_{\nu_{k}, \nu_{k+1}}(x_{k+2}, x_{k+1})} {x_{k} -
x_{k+2}}e(\nu) \ \ & \text{if} \
\nu_{k} = \nu_{k+2}, \\
0 \ \ & \text{otherwise}.
\end{cases}
\end{align*}
\end{definition}

Note that $R(\beta)$ is a  $\Z$-graded algebra provided with
\begin{equation*} \label{eq:Z-grading}
\deg e(\nu) =0, \quad \deg\, x_{k} e(\nu) = (\alpha_{\nu_k}
, \alpha_{\nu_k}), \quad\deg\, \tau_{l} e(\nu) = -
(\alpha_{\nu_l} , \alpha_{\nu_{l+1}}).
\end{equation*}
Let us denote  by 
$\Mod(R(\beta))$ the category of  $R(\beta)$-modules and by 
$\Modg (R(\beta))$ the category of graded  $R(\beta)$-modules.
The category of graded $R(\beta)$-modules which are finite-dimensional over $\cor$ is denoted by $R(\beta)\gmod$.

In this paper, an $R(\beta)$-module means a graded $R(\beta)$-module, unless stated otherwise.

 For a graded $R(\beta)$-module $M=\bigoplus_{k \in \Z} M_k$, we define
$qM =\bigoplus_{k \in \Z} (qM)_k$, where
 \begin{align*}
 (qM)_k = M_{k-1} & \ (k \in \Z).
 \end{align*}
We call $q$ the \emph{grading shift functor} on the category of
graded $R(\beta)$-modules.

\medskip
For $\beta, \gamma \in \rootl^+$ with $|\beta|=m$, $|\gamma|= n$,
 set
$$e(\beta,\gamma)=\displaystyle\sum_{\substack%
{\nu \in I^{m+n}, \\ (\nu_1, \ldots ,\nu_m) \in I^{\beta},\\
(\nu_{m+1}, \ldots ,\nu_{m+n}) \in I^{\gamma}}} e(\nu). $$ 
Then $e(\beta,\gamma)$ is an idempotent
element of $R(\beta+\gamma)$.
Let 
\eq
R( \beta)\tens R( \gamma  )\to e(\beta,\gamma)R(
\beta+\gamma)e(\beta,\gamma) \label{eq:embedding} \eneq be the
$\cor$-algebra homomorphism given by
\begin{equation*}
\begin{aligned}
& e(\mu)\tens e(\nu)\mapsto e(\mu*\nu) \ \ (\mu\in I^{\beta}),\\
& x_k\tens 1\mapsto x_ke(\beta,\gamma) \ \  (1\le k\le m), \\
& 1\tens x_k\mapsto x_{m+k}e(\beta,\gamma) \ \  (1\le k\le n), \\
& \tau_k\tens 1\mapsto \tau_ke(\beta,\gamma) \ \  (1\le k<m), \\
& 1\tens \tau_k\mapsto \tau_{m+k}e(\beta,\gamma) \ \  (1\le k<n), 
\end{aligned}
\end{equation*}
where  $\mu*\nu$ is the concatenation of $\mu$ and $\nu$; i.e.,
$\mu*\nu=(\mu_1,\ldots,\mu_m,\nu_1,\ldots,\nu_n)$.

\medskip
For an  $R(\beta)$-module $M$ and  an   $R(\gamma)$-module $N$,
we define the \emph{convolution product}
$M\conv N$ by
$$M\conv N \seteq R(\beta + \gamma) e(\beta,\gamma)
\tens_{R(\beta )\otimes R( \gamma)}(M\otimes N). $$

\subsection{R-matrices for quiver Hecke algebra}
 For $|\beta|=n$ and $1\le a<n$,  we define $\vphi_a\in R( \beta)$
by
\eq&&\ba{l} \label{eq:intertwiner}
  \vphi_a e(\nu)=
\begin{cases}
  \bl\tau_ax_a-x_{a}\tau_a\br e(\nu) & \text{if $\nu_a=\nu_{a+1}$,} \\[2ex]
\tau_ae(\nu)& \text{otherwise.}
\end{cases}
\label{def:int} \ea \eneq
They are called the {\em intertwiners}.
Since $\{\vphi_k\}_{1\le k\le n-1}$ satisfies the braid relation,
we have a well-defined element $\vphi_w \in R(\beta)$ for each $w \in \sym_n$.

 For $m,n\in\Z_{\ge0}$,
we set
\eqn
&&\sym_{m,n} \seteq \set{w\in\sym_{m+n}}{\text{ $w(i)<w(i+1)$ for any $i\not=m$}}.
\eneqn
For example,
\eqn
&&w[{m,n}](k)=\begin{cases}k+n&\text{if $1\le k\le m$,}\\
k-m&\text{if $m<k\le m+n$}\end{cases}
\eneqn
is an element in $\sym_{m,n}$.

Let $\beta,\gamma\in \rtl^+$ with $|\beta|=m$, $|\gamma|=n$ and let
$M$ be an 
 $R(\beta)$-module and let $N$ be an $R(\gamma)$-module. 
 Then the
map
$$M\tens N\to q^{(\beta,\gamma)-2\inp{\beta,\gamma}}N\conv M$$ given by
$$u\tens v\longmapsto \vphi_{w[n,m]}(v\tens u)$$
is an  $R( \beta,\gamma )$-module homomorphism by \cite[Lemma 1.3.1]{KKK13A}, and it extends
to an $R( \beta +\gamma)$-module  homomorphism \eq &&\R_{M,N}\col
M\conv N\To q^{(\beta,\gamma)-2\inp{\beta,\gamma}}N\conv M, \eneq
where
the symmetric bilinear form $\inp{{\scbul,\scbul}}$ on $\rtl$ is given by
$\inp{\al_i,\al_j}=\delta_{ij}.$

\Def A \KLR\ $R( \beta )$ is {\em symmetric} if
$Q_{i,j}(u,v)$ is a polynomial in $\cor[u-v]$ for all $i,j\in I$.
\edf

\medskip
{\em From now on, we assume that \KLRs\ are symmetric.}
Then, the generalized Cartan matrix $A=(a_{ij})_{i,j \in I}$ is symmetric.
We assume then $(\al_i,\al_j)=a_{i,j}$.
Let $z$ be an indeterminate  which is  homogeneous of degree $2$, and
let $\psi_z$ be the algebra homomorphism
\eqn
&&\psi_z\col R( \beta )\to \cor[z]\tens R( \beta )
\eneqn
given by
$$\psi_z(x_k)=x_k+z,\quad\psi_z(\tau_k)=\tau_k, \quad\psi_z(e(\nu))=e(\nu).$$

For an $R( \beta )$-module $M$, we denote by $M_z$ the
$\bl\cor[z]\tens R( \beta )\br$-module $\cor[z]\tens M$ with the
action of $R( \beta )$ twisted by $\psi_z$. Namely,
\begin{equation}\label{eq:spt1}
\begin{aligned}
& e(\nu)(a\tens u)=a\tens e(\nu)u, \\
& x_k(a\tens u)=(za)\tens u+a\tens (x_ku), \\
& \tau_k(a\tens u)=a\tens(\tau_k u)
\end{aligned}
\end{equation}
for  $\nu\in I^\beta$,  $a\in \cor[z]$ and
$u\in M$.
For $u\in M$, we sometimes denote by $u_z$ the corresponding element $1\tens u$ of
the $R( \beta )$-module  $M_z$.

For a non-zero $R(\beta)$-module $M$ and a non-zero $R(\gamma)$-module $N$,
\eq&&\parbox{71ex}{%
let $s$ be the order of  zero of $R_{M_z,N_{z'}}\col  M_z\conv
N_{z'}\To q^{(\beta,\gamma)-2\inp{\beta,\gamma}}N_{z'}\conv M_z$,
i.e., the largest non-negative integer such that the image of
$R_{M_z,N_{z'}}$ is contained in $(z'-z)^s
q^{(\beta,\gamma)-2\inp{\beta,\gamma}}N_{z'}\conv M_z$.}
\label{def:s} \eneq Note that \cite[Proposition 1.4.4 (iii)]{KKK13A} shows
that
 such an $s$  exists.

\Def For a non-zero $R(\beta)$-module $M$ and a non-zero $R(\gamma)$-module
$N$,
we set $$\Lambda(M,N)\seteq - (\beta,\gamma)+2\inp{\beta,\gamma}-2s, \qquad \Rren_{M_z,N_{z'}}:= (z'-z)^{-s}R_{M_z,N_{z'}}$$ 
and define
$$\rmat{M,N}\col M\conv N\to q^{-\Lambda(M,N)}N\conv M$$
by 
 $$\rmat{M,N} = \bl \Rren_{M_z,N_{z'}} \br\vert_{z=z'=0}.$$
\edf
 By \cite[Proposition 1.4.4 (ii)]{KKK13A}, the morphism $\rmat{M,N}$ does not vanish
if $M$ and $N$ are non-zero.

\Lemma [\cite{KKKO15}] \label{lem:even}
Let $M$ and $N$ be simple $R$-modules.
Then we have
\bnum
   \item $\Lambda(M,N)+\Lambda(N,M) \in 2 \Z_{\ge 0}.$
   \item If $\Lambda(M,N)+\Lambda(N,M) =2m$ for some $m \in \Z_{\ge 0}$, then
  \eqn
\Rm_{M_z,N} \circ \Rm_{N,M_z}=z^m \id_{N \circ M_z} \quad
\text{and} \quad
\Rm_{N,M_z} \circ \Rm_{M_z,N}=z^m \id_{M_z \circ  N}
  \eneqn
  up to constant multiples.
\end{enumerate}
\enlemma

For simple modules $M$ and $N$, we set
\eqn
\de(M,N) = \frac{1}{2}\bl\Lambda(M,N)+\Lambda(N,M)\br\in\Z_{\ge0}.
\eneqn

A simple module $M \in R\gmod$ is called \emph{real} if $M\conv M$ is simple.

\Lemma[\cite{KKKO14}] \label{lem:commute_equiv}
 Let $M$ and $N$ be simple modules in $R \gmod$, and assume that one of them is real. Then 
\bnum
  \item $M \conv N$ and $N \conv M$ have simple socles and simple heads.
  \item Moreover, $\Im(\rmat{M,N})$ is equal to the head of $M \conv N$
and the socle of $N \conv M$.
Similarly,
$\Im(\rmat{N,M})$ is equal to the head of $N \conv M$
and the socle of $M \conv N$  \ro up to grading shifts\rf.
\end{enumerate}
\enlemma
For $M, N\in  R\gmod$, 
we denote  by $M \hconv N$ and $M \sconv N$ the head and the socle
of $M \conv N$, respectively. 

The number $\de(M,N)$ measures the degree of complexity of $M\conv N$
as seen in the following lemma.

\Lemma[{\cite[Proposition 4.14]{KKKO15mm}}]\label{lem:linked}
Let $M$ and $N$ be simple modules in $R \gmod$, and assume that one of them is real.

\bnum
\item $M\conv N$ is simple if and only if
$\de(M,N)=0$.
\item If $\de(M,N)=1$, then $M\conv N$ has length $2$, and there exists an exact sequence
$$0\to M\sconv N\to M\conv N\to M\hconv N\to 0.$$
\ee
\enlemma

\subsection{Quantum affine algebras}\label{sec:quantum affine algebras}
In this subsection, we briefly review the representation theory of finite-dimensional integrable modules over 
quantum affine algebras following \cite{AK, Kas02}. When concerned
with quantum affine algebras, we take the algebraic closure of
$\C(q)$ in $\cup_{m >0}\C((q^{1/m}))$ as  the  base field
$\cor$.

Let $I$ be an index set and  let $A=(a_{ij})_{i,j\in I}$ 
be a generalized Cartan matrix of affine type. 
 We choose $0\in I$ as the leftmost vertices in the tables
in \cite[{pages 54, 55}]{Kac} except $A^{(2)}_{2n}$-case in which
case we take the longest simple root as $\al_0$. Set $I_0
=I\setminus\{0\}$.

The weight lattice $P$ is given by
\begin{align*}
  P = \Bigl(\soplus_{i\in I}\Z \La_i\Bigr) \oplus \Z \delta,
\end{align*}
and the simple roots are given by
$$\al_i=\sum_{j\in I}a_{ji}\La_j \ \text{for } \ i \in I_0, \quad \text{and} \quad
\al_0=\sum_{j\in I}a_{j0}\La_j+\delta.$$
The weight $\delta$ is called the \emph{imaginary root}.
There exist $d_i\in\Z_{>0}$ such that
$$\delta=\sum_{i\in I}d_i\al_i.$$
Note that $d_i=1$ for $i=0$.
The simple coroots $h_i\in P^\vee$ are given by
\eqn
&& \lan h_i,\La_j\ran=\delta_{ij},\quad \lan h_i,\delta\ran=0.
\eneqn

Let $c=\sum_{i\in I}c_i h_i$ be a unique element such that
$c_i\in\Z_{>0}$ and
$$\Z\, c=\set{h\in\soplus\nolimits_{i\in I}\Z h_i}{\text{$\lan h,\al_i\ran=0$ for any $i\in I$}}.$$
We normalize the $\Q$-valued symmetric bilinear form  $(\scbul,\scbul)$
on $P$
by 
 \eq \label{eq:normal}
(\delta, \la)=\lan c,\la\ran \quad \text{for any } \ \la\in P.\eneq

Let us denote by $U_q(\g)$ the quantum group associated with the affine Cartan datum $\bl A,P, \Pi,P^{\vee},\Pi^{\vee},(\cdot\,,\,\cdot)\br$. 
 We denote by $\uqpg$ the subalgebra of $U_q(\mathfrak{g})$ generated by $e_i,f_i,K_i^{\pm1}$ for $i \in I$.
 We call $\uqpg$ the \emph{quantum affine algebra} associated with 
 the generalized Cartan matrix $A$. 
Let us denote by $\Mod(\uqpg)$ the category of left modules of $\uqpg$.

The algebra $\uqpg$ has a Hopf algebra structure with the following coproduct $\Delta$, counit $\varepsilon$, and  antipode $S$:
\eqn
&&\Delta(K_i)=K_i\tens K_i, \quad \Delta(e_i)=e_i\tens K_i^{-1}+1\tens e_i,
\quad \Delta(f_i)=f_i\tens 1+K_i\tens f_i, \\
&&\varepsilon(K_i) = 1, \quad \varepsilon(e_i) = \varepsilon(f_i) = 0, \\
&&S(K_i) = {K_i}^{-1}, \quad S(e_i) = - e_i K_i, \quad S(f_i) = - K_i^{-1}f _i.
\eneqn

Set
$$P_\cl=P/\Z \delta$$
and call it the {\em classical weight lattice}. Let $\cl\col P\to
P_\cl$  denote the projection. 
 Then $P_\cl=\soplus\nolimits_{i\in I}\Z\,\cl(\La_i)$.
Set $P^0_\cl = \set{\la\in P_\cl}{\lan c,\la\ran=0}\subset P_\cl$.

Let us denote by $W$ the Weyl group, i.e., the subgroup of $\Aut(P)$
generated by the simple reflections $s_i$ ($i\in I$),
where $s_i(\la)=\la-\ang{h_i,\la}$ ($\la\in P)$. Then $W$ acts also
on  $P_\cl$ and $P^0_\cl$.

A $\uqpg$-module $M$ is called an {\em integrable module} if

\hs{5ex}\parbox[t]{70ex}{
 \bna
\item $M$ has a weight space decomposition
$$M = \bigoplus_{\lambda \in P_\cl} M_\lambda,$$
where  $M_{\lambda}= \set{ u \in M }{%
\text{$K_i u =q_i^{\lan h_i , \lambda \ran} u$ for all $i\in I$}}$,
\item  the actions of
 $e_i$ and $f_i$ on $M$ are locally nilpotent for any $i\in I$.
\ee}

Let us denote by $\CC_\g$
the abelian tensor category of finite-dimensional integrable $\uqpg$-modules.

If $M$ is a  simple module in $\CC_\g$,
then there exists a non-zero vector $u\in M$
of weight $\la\in P_\cl^0$
such that $\la $ is dominant (i.e.,
$\lan h_i,\la\ran\ge0$ for any $i\in I_0$)
and all the weights of $M$ lie in
$\la-\sum_{i\in I_0}\Z_{\ge0} \cl(\al_i)$.
Such a $\la$ is unique and called 
the {\em dominant extremal weight} of $M$.
Moreover, we have $\dim M_\la=1$, and a non-zero vector of $M_\la$
is called a {\em dominant extremal vector} of $M$.

Let $M$ be an integrable $\uqpg$-module. Then the {\em affinization}
$M_\aff$ of $M$ is the space
$M_\aff=\cor[z,z^{-1}] \tens M$
 with a $\cor[z,z^{-1}] \otimes \uqpg$-action given by 
$$e_i(u_z)=z^{\delta_{i,0}}(e_iu)_z, \quad
f_i(u_z)=z^{-\delta_{i,0}}(f_iu)_z, \quad
K_i(u_z)=(K_iu)_z,
$$
 where $u_z$  denotes  the element $ \one \tens u \in M_\aff$ for $u \in M$.
We 
denote the action 
of  $z$ on $M_\aff$  by $z_M$.  We sometimes write
$(M_z,z)$ for $(M_\aff,z_M)$.

For $x \in \cor^\times$, we define
 $$M_x   \seteq M_\aff/(z_M-x)M_\aff.$$
We sometimes call $x$ the {\em spectral parameter}.

Set $\varpi_i=\gcd(c_0,c_i)^{-1}\cl\bl c_0\Lambda_i-c_i\Lambda_0\br \in P_\cl$
for $i \in I_0$.
Then $\{\varpi_i\}_{i\in I_0}$ forms a basis of $P^0_\cl$.
We call $\varpi_i$ a {\em \ro level $0$\rf\ fundamental weight}.

Then for any $i\in I_0$, there exists a non-zero
$\uqpg$-module $M$ in $\CC_\g$ satisfying the following properties:
we can take $u_\la\in M_\la$ for each $\la\in W\varpi_i\subset P_\cl$
such that
\eqn
&&\parbox{\mylength}{
\bna 
\item
if $j\in I$ and $\la\in W\varpi_i$ satisfy
$\langle h_j,\lambda\rangle\ge 0$, then
$e_ju_\la=0$ and $f_j^{(\langle h_j,\lambda\rangle)}u_\la=u_{s_j\la}$, 
\item
if $j\in I$ and $\la\in W\varpi_i$ satisfy
$\langle h_j,\lambda\rangle\le 0$, then
$f_ju_\la=0$ and $e_j^{( -  \langle h_j,\lambda\rangle)} u_\la=u_{s_j\la}$,
\item
$M$ is generated by $u_{\varpi_i}$.
\ee
}
\eneqn
Then $M$ is a simple module with a dominant extremal weight $\vpi_i$
and  it is
unique up to an isomorphism (\cite{Kas02}).
We call $M$ the fundamental representation with 
dominant extremal weight $\varpi_i$, and denote it by $V(\vpi_i)$.

 With a slight abuse of terminology, we also call $V(\varpi_i)_x$  ($x\in\cor^\times$)
a fundamental representation. 

\bigskip
 If a $\uqpg$-module $M \in \CC_\g$ has a {\it  bar
involution}, a crystal basis with {\it simple crystal graph}
and a {\it lower global basis}, then we say that $M$ is a {\em
good module}.
For the precise definition, see \cite[\S\;8]{Kas02}. 
Every good module is a simple
$\uqpg$-module and a tensor product of a good module is again a good module.
 For example, the fundamental representation
$V(\varpi_i)$ is a good $\uqpg$-module.

For a module $M$  in $\CC_\g$, let us denote the right and the left
dual of $M$ by $\rd M$ and $M^*$, respectively. 
That is, we have isomorphisms
\eq
&&\ba{l}
\Hom_{\uqpg}(M\tens X,Y)\simeq \Hom_{\uqpg}(X, \rd M\tens Y),\\[1ex]
\Hom_{\uqpg}(X\tens \rd M,Y)\simeq \Hom_{\uqpg}(X, Y\tens M),\\[1ex]
\Hom_{\uqpg}(M^*\tens X,Y)\simeq \Hom_{\uqpg}(X, M\tens Y),\\[1ex]
\Hom_{\uqpg}(X\tens M,Y)\simeq \Hom_{\uqpg}(X, Y\tens M^*)
\ea\label{eq:dual}
\eneq
which are 
functorial in $\uqpg$-modules $X$, $Y$.
Then we have 
$\rd\bl M_x\br=\bl \rd M\br_x$
and $\bl M_x\br^*=\bl M^*\br_x$ for any $x \in \cor^\times$. 
The dual of fundamental representations are as follows:
\eq
&&\rd V(\varpi_i) \simeq V(\varpi_{i^*})_{\pstar}, \qquad 
\bl V(\varpi_i)\br^* \simeq V(\varpi_{i^*})_{\pstar^{-1}} 
\eneq
for $i \in I_0$.
 Here, 
$\pstar=(-1)^{\ang{\rho^\vee,\delta}}q^{\ang{c,\rho}}$, and
$\rho$ (respectively, $\rho^\vee$) denotes an element in $P$ (respectively, $P^\vee$) such that 
$\lan h_i, \rho\ran=1$ (respectively $\lan \rho^\vee, \alpha_i \ran =1$) for every $i \in I$. The map
$i \mapsto i^*$ is the involution on $I_0$ determined by 
$\alpha_{i^*}=-w_0 \alpha_i$,
where $w_0$ denotes the longest element of $W_0=\langle s_i \, | \, i\in I_0 \rangle \subset W$.

\subsection{R-matrices for quantum affine algebras} We recall the notion of $R$-matrices for quantum affine algebras, For details see \cite[\S\;8]{Kas02}.
Let us choose the following {\em universal $R$-matrix}.
Let us take a basis $\{P_\nu\}_\nu$ of $U_q^+(\g)$
and a basis $\{Q_\nu\}_\nu$ of $U_q^-(\g)$
dual to each other with respect to a suitable coupling between
$U_q^+(\g)$ and $U_q^-(\g)$.
Then for $\uqpg$-modules $M$ and $N$
define
\eq\label{def:univ}
R_{MN}^\univ(u\otimes v)=q^{(\wt(u),\wt(v))}
\sum_\nu P_\nu v\otimes Q_\nu u\,,
\eneq
so that
$R_{MN}^\univ$ gives a $\uqpg$-linear homomorphism
from $M\otimes N$ to $N\otimes M$
provided that the infinite sum has a meaning.

Let $M$ and $N$ be $\uqpg$-modules in $\CC_\g$,
and let $z_1$ and $z_2$ be indeterminates.
Then $R_{M_{z_1},N_{z_2}}^\univ$
converges in the $(z_2/z_1)$-adic topology.
Hence we obtain a morphism of
$\cor[[z_2/z_1]]\tens_{\cor[z_2/z_1]}\cor[z_1^{\pm1},z_2^{\pm1}]\tens\uqpg$-modules
$$R_{M_{z_1},N_{z_2}}^\univ\col \cor[[z_2/z_1]]\tens_{\cor[z_2/z_1]}
(M_{z_1}\tens N_{z_2})
\to \cor[[z_2/z_1]]\tens_{\cor[z_2/z_1]}
(N_{z_2}\tens M_{z_1}).
$$
If there exist $a\in\cor((z_2/z_1))$ and
 a $\cor[z_1^{\pm1},z_2^{\pm1}]\tens\uqpg$-linear homomorphism
$$\Rren_{M_{z_1},N_{z_2}} \col M_{z_1}\tens N_{z_2}\to N_{z_2}\tens M_{z_1}$$
such that $R^\univ_{M_{z_1},N_{z_2}}=a \Rren_{M_{z_1},N_{z_2}}$,
then we say that $R^\univ_{M_{z_1},N_{z_2}}$ is {\em rationally renormalizable.}

Now assume further that $M$ and $N$ are non-zero.
Then, we can choose $\Rren_{M_{z_1},N_{z_2}}$ so that, for any $c_1,c_2\in \cor^\times$,
the specialization of $\Rren_{M_{z_1},N_{z_2}}$ at $z_1=c_1$, $z_2=c_2$
$$\Rren_{M_{z_1},N_{z_2}}\vert_{z_1=c_1,z_2=c_2}\col M_{c_1}\tens N_{c_2}\to N_{c_2}\tens M_{c_1}$$
does not vanish.
Such an $\Rren$ is unique up to a multiple of
$\cor[(z_2/z_1)^{\pm1}]^\times=\bigsqcup_{n\in\Z}\,\cor^\times (z_2/z_1)^{n}$, 
and it is called a {\em renormalized $R$-matrix}.
We write
$$\rmat{M,N}\seteq \Rren\vert_{z_1=z_2=1}\col M\tens N\to N\tens M,$$
and call it the {\em $R$-matrix} between $M$ and $N$.
The $R$-matrix $\rmat{M,N}$
is well defined up to a constant multiple
when $R^\univ_{M_{z_1},N_{z_2}}$ is rationally renormalizable.
By the definition, $\rmat{M,N}$ never vanishes.

\medskip
Now assume that $M_1$ and $M_2$ are simple $\uqpg$-modules in $\CC_\g$.
Then, the universal $R$-matrix
$R^\univ_{(M_1)_{z_1},(M_2)_{z_2}}$ is rationally renormalizable.
More precisely, we have the following.
Let $u_1$ and $u_2$
be dominant extremal weight vectors of $M_1$ and $M_2$,
respectively.
Then there exists $a_{M_1. M_2}(z_2/z_1)\in\cor[[z_2/z_1]]^\times$
such that 
$$R^\univ_{(M_1)_{z_1},(M_2)_{z_2}}\bl(u_1)_{z_1}\tens (u_2)_{z_2}\br=
a_{M_1. M_2}(z_2/z_1)\bl(u_2)_{z_2}\tens(u_1)_{z_1}\br.$$
Then $\Rnorm_{(M_1)_{z_1},(M_2)_{z_2}}\seteq a_{M_1. M_2}(z_2/z_1)^{-1}R^\univ_{(M_1)_{z_1},(M_2)_{z_2}}$
is a unique $\cor(z_1,z_2)\tens\uqpg$-module homomorphism
\begin{equation}\ba{l}
\Rnorm_{ (M_1)_{z_1},(M_2)_{z_2}} \col
\cor(z_1,z_2)\otimes_{\cor[z_1^{\pm1},z_2^{\pm1}]} \bl(M_1)_{z_1} \otimes (M_2)_{z_2}\br \\[2ex]
\hs{30ex}\To\cor(z_1,z_2)\otimes_{\cor[z_1^{\pm1},z_2^{\pm1}]}
 \bl(M_2)_{z_2} \otimes (M_1)_{z_1} \br
\ea\end{equation}
satisfying
\begin{equation}\Rnorm_{(M_1)_{z_1},(M_2)_{z_2}}\bl(u_1)_{z_1} \otimes (u_2)_{z_2}\br =
(u_2)_{z_2}\otimes (u_1)_{z_1}.
\end{equation}

Note that
$\cor(z_1,z_2)\otimes_{\cor[z_1^{\pm1},z_2^{\pm1}]}
 \big((M_1)_{z_1} \otimes (M_2)_{z_2} \big)$ is a simple
$\cor(z_1,z_2)\tens\uqpg$-module (\cite[Proposition 9.5]{Kas02}).
We call $\Rnorm_{(M_1)_{z_1},(M_2)_{z_2}}$ the
{\em normalized $R$-matrix}.

Let $d_{M_1,M_2}(u) \in \cor[u]$ be a monic polynomial of the
smallest degree such that the image of $d_{M_1,M_2}(z_2/z_1)
\Rnorm_{(M_1)_{z_1},(M_2)_{z_2}}$ is contained in $(M_2)_{z_2} \otimes(M_1)_{z_1}$. We call $d_{M_1,M_2}(u)$ the {\em denominator of
$\Rnorm_{M_1, M_2}$}.
Then,
\begin{equation}
\Rren_{M_{z_1},N_{z_2}}\seteq d_{M_1,M_2}(z_2/z_1)\Rnorm_{(M_1)_{z_1},(M_2)_{z_2}}
\col (M_1)_{z_1} \otimes (M_2)_{z_2} \To
(M_2)_{z_2} \otimes (M_1)_{z_1}
\end{equation}
is a renormalized $R$-matrix, and 
$$\rmat{M_1,M_2}\col M_1\tens M_2\To M_2\tens M_1$$
is equal to the specialization of
$\Rren_{M_{z_1},N_{z_2}}$ at $z_1=z_2=1$
up to a constant multiple.

\smallskip
A simple module $V \in \CC_\g$ is called \emph{real} if $V\tens V$ is simple.
\Lemma[\cite{KKKO14}] \label{lem:commute_equiv_qa}
Let $V,W$ be simple modules in $\CC_\g$ and assume that one of them is real. Then  
\bnum
  \item $V \tens W$ and $W \tens V$ have simple socles and simple heads.
  \item Moreover, $\Im(\rmat{V,W})$ is equal to the head of $V \tens W$
and socle of $W \tens V$.
\end{enumerate}
\enlemma

Similarly to the quiver Hecke algebra case,
for $V, W\in \Mod(\uqpg)$, 
we denote by $V \hconv W$ and $V \sconv W$ the head and the socle
of $V \tens W$, respectively. 

The following lemma can be
proved similarly to the quiver Hecke algebra case
(\cite[Proposition 3.2.9]{KKKO15mm}),
and we do not repeat the proof. 

\Lemma \label{lem:runi}
Let $V,W$ be simple modules in $\CC_g$ and assume that one of them is real.
Then one has
$$\Hom(V\tens W,W\tens V)=\cor \,\rmat{V,W}.$$
\enlemma

\subsection{Hernandez-Leclerc's subcategory} \label{subsec:HL category}
For each quantum affine algebra $\uqpg$, we define a quiver $\mathscr S(\g)$ as follows:
\eq&&
\parbox{70ex}{\be[{(1)}]
\item we take  the set of equivalence classes
$\hat I_{\g} \seteq (I_0 \times \cor^\times) /  \sim$ as the set of vertices,
where the equivalence relation is given by $(i,x) \sim (j,y)$
 if and only if $V(\varpi_i)_x \cong V(\varpi_j)_y$,

\item  we put $d$ many arrows from $(i,x)$ to $(j,y)$,
where $d$ denotes the order of zero of
$d_{V(\varpi_i),V(\varpi_j)}(z_{V(\varpi_j)} / z_{V(\varpi_i)})$ at
$z_{V(\varpi_j)} / z_{V(\varpi_i)} = {y / x}$. \ee} \eneq
Note that  $(i,x)$ and $(j,y)$ are linked by
at least one arrow in $\mathscr S(\g)$  if and only if
the tensor product $V(\varpi_i)_x \tensor V(\varpi_j)_y$ is reducible (\cite[Corollary 2.4]{AK}).

Let $\mathscr S_0(\g)$ be a connected component of $\mathscr S(\g)$.
Note that a connected component of $\mathscr S(\g)$ is unique up to a spectral parameter shift 
and hence $\mathscr S_0(\g)$ is uniquely determined up to a quiver isomorphism.
 Let $\CC^0_\g$ be the smallest full subcategory of $\CC_\g$ stable
under taking subquotients, extensions, tensor products and
containing $\set{V(\varpi_i)_x}{(i,x) \in \mathscr S_0(\g)}$.
This category for  symmetric affine type $\g$ was introduced in \cite{HL10}.
Note that every simple modules in $\CC_\g$ is a tensor product of certain parameter shifts of some simple modules in $\CC_\g^0$ (\cite[Section 3.7]{HL10}). 
The  Grothendieck ring $K(\CC^0_\g)$  of $\CC^0_\g$ is the polynomial ring generated by 
the classes of modules in $\set{V(\varpi_i)_x}{(i,x) \in \mathscr S_0(\g)}$ (\cite{FR99}).

\subsection{KLR-type quantum affine Schur-Weyl duality functors} \label{sec:SWfunctor}
In this subsection, we recall the  construction of the
generalized quantum affine Schur-Weyl duality functor
(\cite{KKK13A}).

Let $\uqpg$ be a quantum affine algebra over $\cor$.

Assume that we are given
an index set $J$, a family $\{V_j\}_{j\in J}$
of good $\uqpg$-modules and a map
 $X \colon J \rightarrow \cor^\times$.

We define a quiver $\Gamma^J$ associated with the datum $(J, X, \{V_j\}_{j\in J})$
 as follows:
\eq&&
\parbox{70ex}{\be[{(1)}]
\item we take $J$ as the set of vertices,
\item  we put $d_{ij}$ many arrows from $i$ to $j$,
where $d_{ij}$ denotes the order of zero  of
$d_{V_{i},V_{j}}(z_{V_{j}} / z_{V_{i}} )$ at $z_{V_{j}} / z_{V_{i}}= {X(j) / X(i)}$.
\ee}\label{gammaJ}
\eneq
Note that we have $d_{ij}d_{ji}=0$ for $i,j\in J$. 

We define a symmetric Cartan matrix $A^J =(a^J_{ij})_{i,j\in J}$  by
\eq \label{eq:Cartan matrix}
a^J_{ij}=\begin{cases}
2&\text{if $i=j$,}\\
-d_{ij}-d_{ji}&\text{if $i\not=j$.}\end{cases} \eneq

 We give a family of polynomials
$\{Q_{i,j}(u,v)\}_{i,j\in J}$ satisfying \eqref{eq:Q} with the form
$$Q_{i,j}(u,v)=\pm(u-v)^{-a_{ij}}\qtext{for $i\not=j$}$$
for some choices of sign $\pm$.

Then we  choose a family $\{P_{i,j}(u,v)\}_{i,j \in J}$
of elements in $\cor[[u,v]]$  satisfying the following conditions.
\eq&&\hs{3ex}\left\{
\parbox{71ex}{\be[{(1)}]
\item $P_{i,i}(u,v)=1$ for $i \in J$.
\item The homomorphism $P_{ij}(u,v) \Rnorm_{(V_i)_{X(i)},(V_j)_{X(j)}} (z_{(V_i)_{X(i)}}, z_{(V_j)_{X(j)}})$
has no  pole and no zero  at $u=v=0$, 
where   $\cor[z_{(V_i)_{X(i)}}^{\pm 1}, z_{(V_j)_{X(j)}}^{\pm 1}] \to\cor[[u,v]]$ is  given  by
 $z_{(V_i)_{X(i)}} \mapsto 1+u $ and  $z_{(V_j)_{X(j)}} \mapsto 1+v$.
\item  $ Q_{i,j}(u,v)=P_{i,j}(u,v)P_{j,i}(v,u)$ for $i\not=j$.
\ee}\right. \label{eq:Pij}
\eneq

We call such a family $\{P_{i,j}(u,v)\}_{i,j\in J}$ a {\em \dcf}. 
 Note that we have
$P_{i,j}(u,v)\in \cor[[u,v]]^\times(u-v)^{d_{ij}}$.

Let $\set{\alj_i}{i \in J}$  be the  set of simple roots
corresponding to the Cartan matrix $A^J$ and $\rootl^+_J=\sum_{i \in
J} \Z_{\ge 0}  \alj_i$ be the corresponding positive root lattice.
We define the coupling $(\ ,\ )$ by $(\alj_i,\alj_j)=a^J_{ij}$.

Let us denote by $R^{J}( \beta )$  $(\beta \in \rootl^+_J)$  the symmetric
\KLR\ associated with the Cartan matrix $A^J$ and the  parameter
$\{ Q_{i,j}(u,v)\}_{i,j\in J}$.

For each $\nu =(\nu_1,\ldots, \nu_n) \in J^{ \beta }$,
let
\begin{equation*}
\Oh_{\mathbb{T}^n, X(\nu)} = \cor [[X_1 - X(\nu_1), \ldots, X_n-X(\nu_n)]]
\end{equation*}
 be the completion of
the local ring $\sho_{\mathbb{T}^n, X(\nu)}$
 of $\mathbb{T}^n$ at  $X(\nu)\seteq(X(\nu_1),\ldots,X(\nu_n))$.
Here $\mathbb{T}=\on{Spec}(\cor[X,X^{-1}])$.
 Set
$$
V_\nu =( V_{\nu_1})_\aff \otimes \cdots \otimes
(V_{\nu_n})_\aff.$$ Then $V_{\nu}$ is a left
$\bl\cor[X_1^{\pm1},\ldots, X_n^{\pm1}]\otimes\uqpg \br$-module,
where $X_k=z_{V_{\nu_k}}$.
 We   define
\eqn
&&\hV_\nu \seteq \Oh_{\mathbb{T}^n, X(\nu)}\otimes _{\cor[X_1^{\pm1},\ldots, X_n^{\pm1}]}
V_\nu, \qquad
\bV[ \beta ] \seteq \soplus\nolimits_{\nu \in J^{ \beta }}
\hV_\nu e(\nu), \\
&&\quad \text{and} \quad \bVK[ \beta ] \seteq  
\soplus\nolimits_{\nu \in J^{ \beta }}\mathrm{Frac}\bl \Oh_{\mathbb{T}^n, X(\nu)} \br\otimes _{\cor[X_1^{\pm1},\ldots, X_n^{\pm1}]}
V_\nu,
\label{eq:Vhat}
\eneqn
where $\mathrm{Frac}\bl \Oh_{\mathbb{T}^n, X(\nu)}\br$ denotes the ring of fractions of $\Oh_{\mathbb{T}^n, X(\nu)}$.

 Let $e(\nu) \in R^J(\beta)$  act on 
$\bV[ \beta ]  $ as the projection and 
$x_k e(\nu) \in R^J(\beta)$ as the multiplication by
$X(\nu_k)^{-1}\bl X_k -X(\nu_k)\br e(\nu)$.

Assign  $e(\nu)\tau_a$ to a $\cor$-linear map on $\bVK[ \beta ]$ defined 
by 
\begin{equation*}
e(\nu) \tau_a = \begin{cases}
R^\nu_{a,a+1} \circ P_{\nu_a, \nu_{a+1}}(x_{a}, x_{a+1})\circ e(\nu) & \text{if $\nu_a \neq \nu_{a+1}$,} \\
(x_a -x_{a+1})^{-1}\circ (R^\nu_{a,a+1} -1)\circ e(\nu) & \text{if $\nu_a = \nu_{a+1}$,}
\end{cases}
\end{equation*}
where
$R^\nu_{a,a+1}: \bV[ \beta ] \to \bVK[\beta]$
is the $\cor$-linear map given by 
$$R^\nu_{a,a+1}  = \id \tens \cdots \id \tens \Rnorm_{V_{\nu_a},V_{\nu_{a+1}} } (z_{V_{\nu_a}},z_{V_{\nu_{a+1}}}) \tens \id \cdots \tens \id.$$

The following theorem is one of the main results of \cite{KKK13A}.

\begin{theorem}
The assignments above give a well-defined right action of  $ R^J(\beta)$ on  $\bV[\beta]$ 
which commutes with the left action of $\uqpg$.
\end{theorem}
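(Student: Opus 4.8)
The plan is to verify that the defining relations of $R^J(\beta)$ are satisfied by the operators on $\bV[\beta]$, and that the action is well-defined (i.e., preserves $\bV[\beta]$, not merely $\bVK[\beta]$) and commutes with $\uqpg$. The commutation with $\uqpg$ is essentially built in: each $R^\nu_{a,a+1}$ is a composite of a normalized $R$-matrix (a $\uqpg$-intertwiner after extending scalars) with multiplication operators in the $x_k$'s, and the $x_k$'s act as scalars in $X_k = z_{V_{\nu_k}}$, which are central for the $\uqpg$-action on $V_\nu$; hence every operator $e(\nu)$, $x_k$, $\tau_a$ commutes with $\uqpg$. So the real content is the relation-checking and the regularity (no-pole) statement.

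First I would dispose of the easy relations. The $e(\nu)$ are orthogonal idempotents summing to $1$ by construction (projections onto the $\hV_\nu e(\nu)$ summands); the $x_k$ commute among themselves and with the $e(\nu)$ because they are multiplications by functions of the $X_k$; and $\tau_k\tau_m = \tau_m\tau_k$ for $|k-m|>1$ because the corresponding normalized $R$-matrices act on disjoint tensor factors. The intertwining relation $\tau_m e(\nu) = e(s_m(\nu))\tau_m$ is immediate from the definition of $R^\nu_{a,a+1}$ and how the factors get permuted. The relation $(\tau_k x_m - x_{s_k(m)}\tau_k)e(\nu) = \cdots$ splits into the case $\nu_k \neq \nu_{k+1}$, where $\tau_k$ commutes with $x_m$ for $m \neq k, k+1$ and the $R$-matrix intertwines multiplication by $x_k$ and $x_{k+1}$ trivially since $P_{\nu_k,\nu_{k+1}}$ has no $x$-dependence interfering, and the case $\nu_k = \nu_{k+1}$, where the $(x_k - x_{k+1})^{-1}$ factor produces the $\pm e(\nu)$ terms by a direct Leibniz-type computation; this is the KLR analogue of the classical computation for the affine Hecke algebra and was essentially carried out in \cite{KKK13A}.

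The two genuinely substantive points are: (i) the quadratic relation $\tau_k^2 e(\nu) = Q_{\nu_k,\nu_{k+1}}(x_k,x_{k+1})e(\nu)$, and (ii) the braid relation for the $\tau$'s, together with (iii) the assertion that the operators actually preserve the integral form $\bV[\beta]$ (no poles). For (i), when $\nu_k \neq \nu_{k+1}$ one computes $\tau_k^2 e(\nu) = R^\nu_{a+1,a}R^\nu_{a,a+1} \circ P_{\nu_k,\nu_{k+1}}(x_k,x_{k+1})P_{\nu_{k+1},\nu_k}(x_{k+1},x_k)$; the composite of the two normalized $R$-matrices in opposite orders equals $d_{V_{\nu_{k+1}},V_{\nu_k}}(z_1/z_2)^{-1}d_{V_{\nu_k},V_{\nu_{k+1}}}(z_2/z_1)^{-1}$ times the identity (the unitarity-type property of normalized $R$-matrices, using that $V_\nu$ is generated by extremal vectors and that the relevant tensor product is simple generically), and the denominators combine with the $P$'s to give exactly $Q_{\nu_k,\nu_{k+1}}(x_k,x_{k+1})$ by condition \eqref{eq:Pij}(3) and the identification of $x_k$ with the renormalized shift of $z_{V_{\nu_k}}$; the case $\nu_k = \nu_{k+1}$ uses $Q_{i,i} = 0$ and a direct manipulation of $(x_k-x_{k+1})^{-1}(R^\nu-1)$. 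For (ii), the braid relation for the $\tau$'s is deduced from the Yang--Baxter equation for the normalized $R$-matrices $\Rnorm_{(V_i)_{z_1},(V_j)_{z_2}}$ (valid over $\cor(z_1,z_2)$ since the triple tensor product is a simple module there), bookkeeping the $P_{i,j}$ factors so that both sides produce the same rational operator, then specializing; the degenerate sub-cases where some $\nu$-entries coincide are handled exactly as in \cite[\S1.4]{KKK13A}. I expect (iii) --- showing the operators land in $\bV[\beta]$ rather than in $\bVK[\beta]$ --- to be the main obstacle: one must check that $R^\nu_{a,a+1}\circ P_{\nu_a,\nu_{a+1}}(x_a,x_{a+1})$ has neither pole nor zero at $X_a = X(\nu_a)$, $X_{a+1}=X(\nu_{a+1})$, which is precisely arranged by condition \eqref{eq:Pij}(2) defining a duality coefficient, and that in the equal-index case the simple pole of $\Rnorm$ along the diagonal cancels against $(x_a - x_{a+1})^{-1}$, leaving a regular operator; this local analysis at the point $X(\nu)$, using the structure of the denominator $d_{M_1,M_2}(u)$ and the completed local rings $\Oh_{\mathbb{T}^n,X(\nu)}$, is the delicate part of the argument, and it is where the hypotheses that the $V_j$ are good modules (hence simple, with controlled $R$-matrices) are essential.
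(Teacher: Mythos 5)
The paper does not give a fresh proof of this theorem; it states it as one of the main results of \cite{KKK13A}, and the Remark immediately following observes only that the argument there (carried out with the specific \dcf $P_{i,j}(u,v)=(u-v)^{d_{ij}}$) works unchanged for an arbitrary \dcf. Your outline does match the plan of that proof: $\uqpg$-equivariance is automatic, the easy relations follow from bookkeeping, and the real content is the quadratic and braid relations together with regularity of the operators at the point $X(\nu)$. Your identification of \eqref{eq:Pij}(2) as the condition forcing the operators to preserve the lattice $\bV[\beta]$, and of the Yang--Baxter equation (with the $P$-factors tracked) as the source of the braid relation, is correct.

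However, your verification of the quadratic relation $\tau_k^2 e(\nu)=Q_{\nu_k,\nu_{k+1}}(x_k,x_{k+1})e(\nu)$ for $\nu_k\neq\nu_{k+1}$ contains a genuine error. You claim the composite $\Rnorm_{V_{\nu_{k+1}},V_{\nu_k}}\circ\Rnorm_{V_{\nu_k},V_{\nu_{k+1}}}$ equals $d_{V_{\nu_{k+1}},V_{\nu_k}}(z_1/z_2)^{-1}d_{V_{\nu_k},V_{\nu_{k+1}}}(z_2/z_1)^{-1}\cdot\id$, and that ``the denominators combine with the $P$'s'' to produce $Q_{\nu_k,\nu_{k+1}}$. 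Both halves of this are wrong. The unitarity property of the \emph{normalized} $R$-matrices is that the composite is exactly the identity: over $\cor(z_1,z_2)$ the tensor product is simple, so the composite is a scalar, and it fixes the extremal vector $(u_1)_{z_1}\otimes(u_2)_{z_2}$, hence the scalar is $1$. The denominator polynomials enter only if one passes to $\Rren = d\cdot\Rnorm$, which is not what the action uses. Moreover, condition \eqref{eq:Pij}(3) reads $Q_{i,j}(u,v)=P_{i,j}(u,v)P_{j,i}(v,u)$ with no denominator factors; if the $R$-matrix composite really contributed $d^{-1}d^{-1}$, the result would be $Q$ divided by denominator polynomials, not $Q$. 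The correct computation is: conjugate the multiplication by $P_{\nu_{k+1},\nu_k}$ back through $R^\nu_{k,k+1}$ (which exchanges $x_k$ and $x_{k+1}$), use unitarity to cancel the two $R$-matrix factors, and you are left with $P_{\nu_{k+1},\nu_k}(x_{k+1},x_k)\,P_{\nu_k,\nu_{k+1}}(x_k,x_{k+1})\,e(\nu)=Q_{\nu_k,\nu_{k+1}}(x_k,x_{k+1})\,e(\nu)$ by \eqref{eq:Pij}(3) alone.
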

 Note that $\bV[\beta]$ is understood to be the trivial $\uqpg$-module
$\cor$ when $\beta=0$. 

\begin{remark}
The action of $ R^J(\beta)$ on $\bV[\beta]$ 
depends on the choice of \dcfs.
In \cite{KKK13A}, we take 
$P_{i,j}(u,v) =(u-v)^{d_{ij}}$ 
as the \dcf.
However, the same proof still works for the above theorem
with an arbitrary choice of \dcfs.
\end{remark}

For each $\beta \in \rootl^+_J$, we define 
the functor
\begin{align}
\F_{ \beta } \colon \Mod(R^J(\beta)) &\rightarrow
\Mod(\uqpg)
\end{align}
 by
\eq \F_{ \beta } (M) \seteq \bV[{ \beta }] \label{eq:functor}
\otimes_{R^J({ \beta })} M, \label{eq:the functor}\eneq
where $M$ is an
$R^J(\beta)$-module.

Set \eqn \F\seteq \soplus_{\beta\in \rootl^+_J}\F_\beta
\col\soplus_{\beta\in \rootl^+_J} \Mod(R^J(\beta))
\rightarrow \Mod(\uqpg). \eneqn
 Note that the functor $\F$ depends on the choice of
the \dcf $\{P_{i,j}(u,v)\}_{i,j\in J}$.

\begin{theorem}[{\cite{KKK13A}}] \label{thm:exact}
If the Cartan matrix $A^J$ associated with  $R^J$  is of finite type
$A, D$ or $E$, then the functor  $\F_\beta$ is exact  for every $\beta
\in \rootl^+_J$.
\end{theorem}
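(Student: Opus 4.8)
The plan is to follow the strategy of \cite{KKK13A} and reduce the exactness of $\F_\beta$ to a flatness statement for the bimodule $\bV[\beta]$, using the ADE type hypothesis to control homological dimension. First I would observe that $\F_\beta(M) = \bV[\beta] \otimes_{R^J(\beta)} M$ is exact if and only if $\bV[\beta]$ is flat as a right $R^J(\beta)$-module. Since $R^J(\beta)$ is finite-dimensional over $\cor$ in each graded piece but infinite-dimensional overall, I would instead work locally: decompose $\bV[\beta] = \soplus_{\nu} \hV_\nu e(\nu)$ and note that each $\hV_\nu$ is a free module over the completed local ring $\Oh_{\mathbb{T}^n, X(\nu)}$. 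The key point is that the right $R^J(\beta)$-action is built from the normalized $R$-matrices $\Rnorm$, whose denominators $d_{V_i,V_j}(u)$ encode exactly the arrows of $\Gamma^J$, hence the Cartan matrix $A^J$.

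The main technical input is a comparison: one shows that, after completion, the bimodule $\bV[\beta]$ is isomorphic (as a right module, up to the local ring factors) to a sum of copies of the regular representation $R^J(\beta)$ itself, or more precisely that $\bV[\beta] \otimes_{R^J(\beta)} -$ has the same homological behavior as tensoring with a projective generator. The way \cite{KKK13A} handles this is to relate $R^J(\beta)$ to the KLR algebra of the corresponding finite type Cartan matrix via a "cyclotomic"-type or "affinization" argument, and then to invoke the fact that for finite type $A$, $D$, $E$ the relevant polynomial representation is faithful and the module $\bV[\beta]$ realizes $R^J(\beta)$ as an algebra of operators on a space that is free over the appropriate commutative base. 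Concretely, I would check that $x_k$ acts by multiplication by $X(\nu_k)^{-1}(X_k - X(\nu_k))$, which is a regular (non-zero-divisor) element, and that the $\tau_a$ operators, after clearing the $P_{\nu_a,\nu_{a+1}}$ and $(x_a - x_{a+1})^{-1}$ denominators, land in $\bV[\beta]$ precisely because the order of zero of $\Rnorm$ at the relevant specialization matches $d_{\nu_a,\nu_{a+1}}$ — this is where the definition of $\Gamma^J$ and the choice of \dcf are used.

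The hard part will be establishing that $\bV[\beta]$ is actually flat and not merely torsion-free over $R^J(\beta)$; for a general (infinite-dimensional, non-Noetherian in the naive sense) graded algebra torsion-freeness does not imply flatness. The resolution is special to finite ADE type: there $R^J(\beta)$ has finite global dimension (indeed it is, up to Morita equivalence / grading, related to a polynomial ring or a category with finite homological dimension), so one can argue by induction on a filtration, or one can use the known faithful polynomial representation of the finite-type KLR algebra together with the fact that $\bV[\beta]$ is, fiberwise over $\mathbb{T}^n$, a deformation of that polynomial representation. I would therefore structure the proof as: (i) reduce exactness to flatness of $\bV[\beta]$ over $R^J(\beta)$; (ii) using the denominator formula and the definition of $A^J$, identify $\bV[\beta]$ with a completion of the standard polynomial representation of the finite-type KLR algebra $R^J(\beta)$, tensored with free modules over the local rings $\Oh_{\mathbb{T}^n,X(\nu)}$; (iii) invoke that this polynomial representation is free (hence flat) over $R^J(\beta)$ when $A^J$ is of type $A$, $D$, or $E$, because in those cases $R^J(\beta)$ is a (graded) symmetric/self-injective-free situation where the polynomial module is a projective generator. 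Steps (i) and (ii) are formal manipulations with the definitions above; step (iii) is the substantive claim, and I expect it to be the principal obstacle, requiring either the explicit basis theorem for finite-type KLR algebras or an appeal to the geometric (quiver variety / cohomological) realization in which flatness is manifest.
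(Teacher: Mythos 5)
First, note that the paper does not actually prove Theorem \ref{thm:exact}; it is imported as a citation from \cite{KKK13A}, so there is no ``paper's own proof'' here to compare against. Your proposal must therefore stand on its own, and it has two concrete problems. The more serious one is step (ii): $\bV[\beta]$ is \emph{not} a completion of the polynomial representation of $R^J(\beta)$ tensored with free $\Oh_{\mathbb{T}^n,X(\nu)}$-modules. Each summand $\hV_\nu$ is $\Oh_{\mathbb{T}^n,X(\nu)}\otimes V_{\nu_1}\otimes\cdots\otimes V_{\nu_n}$, which has $\Oh$-rank $\prod_k\dim V_{\nu_k}$, and the generators $\tau_a$ act through the normalized $R$-matrices $R^\nu_{a,a+1}$, which genuinely mix the tensor factors $V_{\nu_a}\otimes V_{\nu_{a+1}}$. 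This is structurally very different from the Demazure-operator action defining the polynomial representation, so the reduction you propose in (ii) simply does not hold; nothing in the construction identifies $\bV[\beta]$ with ``polynomial rep $\otimes$ free''.

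The second problem is that step (iii) asserts two things that are false or off-target. The polynomial representation of $R^J(\beta)$ is not free (nor projective) over $R^J(\beta)$: by the Khovanov--Lauda basis theorem, $R^J(\beta)$ is free of rank $n!\cdot|J^\beta|$ over $\cor[x_1,\ldots,x_n]$ (with $n=|\beta|$), whereas the polynomial module has rank only $|J^\beta|$, so it is a proper non-projective quotient. And faithfulness of the polynomial representation holds for \emph{every} symmetric quiver Hecke algebra without loops, not only for finite type $A,D,E$, so it cannot be the place where the ADE hypothesis does its work. The actual argument in \cite{KKK13A} is to show that $\bV[\beta]$ is a projective right $R^J(\beta)$-module by treating $\bV[\beta]$ and $R^J(\beta)$ as modules over the commutative subalgebra of symmetric polynomials in the $x_k$'s (over which both are finite and free), and then applying a local flatness criterion: flatness over $R^J(\beta)$ is reduced to projectivity of the finite-dimensional fibre $\bV[\beta]/\mathfrak m\bV[\beta]$ over the finite-dimensional quotient $R^J(\beta)/\mathfrak m R^J(\beta)$, which is where the finite-type hypothesis and a dimension/structure comparison of the fibre actually enter. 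Your reduction (i) to flatness is the right first move, but (ii) misdescribes the bimodule and (iii) replaces the needed fibrewise analysis with an incorrect freeness claim, so as written the proposal does not close the argument.
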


For each $i \in J$, let $L(i)$ be the $1$-dimensional
$R^J(\alj_i)$-module  generated by a non-zero vector $u(i)$ with
relation $x_1 u(i) = 0$ and $e(j) u(i)=\delta(j=i) u(i)$ for $j \in
J$. 
Then the affinization $L(i)_z :=  \cor[z] \otimes L(i)$  is isomorphic to  $R^J(\alj_i)$
as a left $R^J(\alj_i)$-module.

By the construction, we have
\begin{prop} {\rm (}\cite[Proposition 3.2.2]{KKK13A}{\rm )} \label{prop:image of tau} \hfill
\bnum
\item For any $i \in J$,
we have
\begin{align}
  \mathcal F(L(i)_z) \simeq \cor[[z]]  \tens_{\cor[z_{(V_i)_{X(i)}}^{\pm 1}]} ((V_i)_{X(i)})_\aff, 
\end{align}
 where   $ \cor[z_{(V_i)_{X(i)}}^{\pm 1}] \to\cor[[z]]$ is  given  by
 $z_{(V_i)_{X(i)}} \mapsto 1+z $. 
In particular, we have
\eqn 
\F(L(i))\simeq (V_i)_{X(i)} \quad \text{for } i \in J.
\eneqn

\item For $i,j\in J$,
let   $$\phi=R_{L(i)_z,L(j)_{z'}}\col L(i)_z \conv L(j)_{z'}
\rightarrow L(j)_{z'} \conv L(i)_z.$$
That is, let $\phi$ be  the
  $R^J(\alj_i+\alj_j)$-module homomorphism given by
\begin{align}
  \phi\bl u(i)_z\otimes u(j)_{z'}\br = \vphi_1\bl u(j)_{z'} \otimes u(i)_z\br,
\end{align}
where $\vphi_1$ is the intertwiner in \eqref{def:int}. Then we have
\eq \label{eq:Fphi}&&\mathcal F(\phi)
=P_{i,j}(z,z')
\Rnorm_{\F(L(i)),\F(L(j))}  (z_{\F(L(i))}, z_{\F(L(j))})  \eneq
as a $U_q'(\g)$-module homomorphism
\ \eqn &&
 \cor[[z,z']]\tens_{\cor[z_{\F(L(i))}^{\pm 1},z_{\F(L(j))}^{\pm 1}]}
\bl \F(L(i))_\aff \otimes \F(L(j))_\aff\br \\*
&&\hs{10ex}\longrightarrow\cor[[z,z']]\tens_{\cor[z_{\F(L(i))}^{\pm 1},z_{\F(L(j))}^{\pm 1}]}
\bl \F(L(j))_\aff \otimes \F(L(i))_\aff\br 
\eneqn where   $\cor[z_{\F(L(i))}^{\pm 1},z_{\F(L(j))}^{\pm 1}] \to\cor[[z,z']]$ is  given  by
 $z_{\F(L(i))} \mapsto 1+z $ and  $z_{\F(L(j))} \mapsto 1+z' $.
\ee
\end{prop}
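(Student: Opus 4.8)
The statement is obtained by unwinding the definitions of the functor $\F$, of the modules $L(i)_z$, and of the right $R^J$-action on $\bV$. I would organize the proof as follows.

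\textbf{Part (i).} Since $J^{\alj_i}$ consists of the single one-letter word $(i)$, the algebra $R^J(\alj_i)$ is the polynomial ring $\cor[x_1]e((i))$, and $x_1^ke((i))\mapsto z^k\tens u(i)$ is an isomorphism of left $R^J(\alj_i)$-modules $R^J(\alj_i)\isoto L(i)_z$. Applying $\F_{\alj_i}=\bV[\alj_i]\tens_{R^J(\alj_i)}(-)$ therefore yields a $\uqpg$-linear isomorphism $\F(L(i)_z)\simeq\bV[\alj_i]=\hV_{(i)}e((i))=\Oh_{\tv,X(i)}\tens_{\cor[X_1^{\pm1}]}(V_i)_\aff$, where $X_1=z_{V_i}$ and $x_1e((i))$ acts by multiplication by $X(i)^{-1}(X_1-X(i))$. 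As $X(i)\in\cor^\times$ one has $\Oh_{\tv,X(i)}=\cor[[X_1-X(i)]]$ and $(V_i)_\aff=\cor[X_1^{\pm1}]\tens_\cor V_i$, so the right-hand side is $\cor[[X_1-X(i)]]\tens_\cor V_i$; under the isomorphism the generator $z$ of $\cor[z]\subset L(i)_z$ is sent to $x_1=X(i)^{-1}(X_1-X(i))$, i.e. $X_1=X(i)(1+z)$. Renaming the completion variable, this identifies $\F(L(i)_z)$ with $\cor[[z]]\tens_\cor V_i$, and via $z_{(V_i)_{X(i)}}\mapsto 1+z=X_1/X(i)$ — using $(V_i)_{X(i)}=(V_i)_\aff/(z_{V_i}-X(i))(V_i)_\aff$ — with $\cor[[z]]\tens_{\cor[z_{(V_i)_{X(i)}}^{\pm1}]}((V_i)_{X(i)})_\aff$. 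Finally, from the exact sequence $0\to zL(i)_z\to L(i)_z\to L(i)\to 0$, the isomorphism $zL(i)_z\simeq L(i)_z$ (multiplication by $z=x_1$) and right-exactness of $\F_{\alj_i}$, one gets $\F(L(i))\simeq\F(L(i)_z)/z\,\F(L(i)_z)\simeq(V_i)_{X(i)}$.

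\textbf{Part (ii).} I would first compute $\F$ of the convolution. Since $L(i)_z\conv L(j)_{z'}=R^J(\alj_i+\alj_j)\,e(\alj_i,\alj_j)\tens_{R^J(\alj_i)\tens R^J(\alj_j)}(L(i)_z\tens L(j)_{z'})$ and $\bV[\alj_i+\alj_j]\tens_{R^J(\alj_i+\alj_j)}R^J(\alj_i+\alj_j)e(\alj_i,\alj_j)=\hV_{(i,j)}e((i,j))$, we get
\[\F(L(i)_z\conv L(j)_{z'})=\hV_{(i,j)}e((i,j))\tens_{R^J(\alj_i)\tens R^J(\alj_j)}(L(i)_z\tens L(j)_{z'}),\]
and the change of variables from part (i), now $X_1=X(i)(1+z)$ and $X_2=X(j)(1+z')$, rewrites this as $\cor[[z,z']]\tens_{\cor[z_{\F(L(i))}^{\pm1},z_{\F(L(j))}^{\pm1}]}\bl\F(L(i))_\aff\tens\F(L(j))_\aff\br$ with $z_{\F(L(i))}\mapsto 1+z$, $z_{\F(L(j))}\mapsto 1+z'$; similarly for the target $L(j)_{z'}\conv L(i)_z$. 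Since $\phi$ is the left-$R^J$-linear map sending the generator $u(i)_z\tens u(j)_{z'}$ (of colour $(i,j)$) to $\vphi_1\bl u(j)_{z'}\tens u(i)_z\br$ and $\F(\phi)=\id_{\bV}\tens_{R^J}\phi$, the map $\F(\phi)$ is, on $\hV_{(i,j)}$, the right action of $e((i,j))\vphi_1$ on $\bV$. For $i\ne j$, $\vphi_1=\tau_1$ and by definition $e((i,j))\tau_1=R^{(i,j)}_{1,2}\circ P_{i,j}(x_1,x_2)=\Rnorm_{V_i,V_j}(z_{V_i},z_{V_j})\circ P_{i,j}(x_1,x_2)$; feeding in $x_1\mapsto z$, $x_2\mapsto z'$, $z_{V_i}=X_1=X(i)\,z_{\F(L(i))}$, $z_{V_j}=X_2=X(j)\,z_{\F(L(j))}$ (the constants $X(i),X(j)$ dropping out of $\Rnorm$, which is determined by the renormalized variables) gives exactly \eqref{eq:Fphi}. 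For $i=j$ one uses $\vphi_1=\tau_1x_1-x_1\tau_1$, $e(\nu)\tau_1=(x_1-x_2)^{-1}(R^\nu_{1,2}-1)$ and the relation $(\tau_1x_1-x_2\tau_1)e(\nu)=-e(\nu)$ to reduce $e(\nu)\vphi_1$ to $-R^\nu_{1,2}$ on $\bV$, which is \eqref{eq:Fphi} up to the overall sign (harmless, since $P_{i,i}=1$).

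\textbf{Main obstacle.} Nothing here is deep; the content is bookkeeping. The one genuine point is that the right-hand side of \eqref{eq:Fphi} has to define a morphism between the \emph{completed} modules, i.e. $P_{i,j}(u,v)\Rnorm_{\F(L(i)),\F(L(j))}$ must have neither pole nor zero after the substitution $z_{\F(L(i))}\mapsto1+u$, $z_{\F(L(j))}\mapsto1+v$ — but this is exactly condition (2) of \eqref{eq:Pij} in the definition of a \dcf, so it is part of the hypotheses (and it is also what makes the composite $R^\nu_{1,2}\circ P_{\nu_1,\nu_2}(x_1,x_2)$ send $\bV$ into $\bV$ rather than only into $\bVK[\alj_i+\alj_j]$). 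The remaining work is the careful matching of the three descriptions of each module and the consistent propagation of the substitutions $X_1=X(i)(1+z)$, $X_2=X(j)(1+z')$ through the tensor products.
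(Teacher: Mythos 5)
Your overall strategy is the right one: the paper offers no argument of its own here (it quotes \cite[Proposition 3.2.2]{KKK13A} and says ``by the construction''), and unwinding the definitions as you do is exactly what is intended. Part (i) is fine, and your treatment of the $i\neq j$ case of (ii), including the remark that condition (2) of \eqref{eq:Pij} is what makes $P_{i,j}\Rnorm$ regular and invertible at the completion point and that the constants $X(i),X(j)$ disappear because the normalized $R$-matrix is normalized on dominant extremal vectors, is correct.

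The genuine gap is your $i=j$ case. First, the computation is wrong: $\bV[\beta]$ is a \emph{right} $R^J(\beta)$-module, so under $\F$ the element $\vphi_1$ acts with the composition order reversed, and the operator $R^\nu_{1,2}$ interchanges the two spectral variables (this swap is forced: it is exactly what makes the defining relation $(\tau_1x_1-x_2\tau_1)e(\nu)=-e(\nu)$ hold on $\bV$). With these conventions, writing $\vphi_1e(\nu)=\bl(x_2-x_1)\tau_1-1\br e(\nu)$ and letting $\rho$ denote the right action, one gets
$\rho\bl\vphi_1e(\nu)\br(v)=(x_1-x_2)^{-1}\Bigl(R^\nu_{1,2}\bl(x_2-x_1)v\br-(x_2-x_1)v\Bigr)-v
=R^\nu_{1,2}(v)+v-v=R^\nu_{1,2}(v)$,
i.e.\ the sign is $+$, not $-$; your $-R^\nu_{1,2}$ comes from composing the factors of $(x_2-x_1)\tau_1$ in the left-module order (or, equivalently, from letting $R^\nu_{1,2}$ commute with multiplication by $x_1,x_2$). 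Second, even if the sign had come out negative you could not dismiss it as ``harmless'': \eqref{eq:Fphi} is an equality on the nose with $P_{i,i}=1$, not an identity up to constant, and the precise normalization of $\F(\phi)$ is exactly what enters the functions $h_{a,b}(z)$ of Proposition \ref{prop:hab} and hence the construction of the corrected \dcfs in Theorem \ref{thm:cij}; a stray $-1$ there would change $\la_{a,b}$ and break the factorization argument. So redo the $i=j$ case with the right-module conventions; once that is corrected the proof is complete.
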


  Recall that $\CC_\g$ denotes the category of
finite-dimensional integrable $\uqpg$-modules.

\begin{theorem} \rm{(}\cite{KKK13A}\rm{)}  \label{thm:conv to tensor}
The functor $\F$ induces a tensor functor
$$\F\col\soplus_{\beta \in \rootl^+_J} R^J( \beta) \gmod
\to\CC_\g.$$ Namely, $\F$ sends finite-dimensional graded
$R^J(\beta)$-modules to $\uqpg$-modules in $\CC_\g$, and  there
exist canonical $\uqpg$-module isomorphisms
 \eqn&& \F(R^J(0))\simeq
\cor,  \quad  \F(M_1 \conv M_2) \simeq \F(M_1) \otimes \F(M_2) \eneqn
for $M_1 \in R^J( \beta_1 ) \gmod$ and $M_2 \in R^J( \beta_2 )\gmod$
such that the diagrams in  \cite[A.1.2]{KKK13A} are commutative . 
\end{theorem}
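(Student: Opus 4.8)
The plan is to argue directly from the definition $\F_\beta(M)=\bV[\beta]\otimes_{R^J(\beta)}M$. The crucial point throughout is that on a \emph{graded} finite-dimensional module $M$ over $R^J(\beta)$ the generators $x_1,\dots,x_n$, being of positive degree, act nilpotently; I write $(x^j)$ for the ideal generated by $x_1^j,\dots,x_n^j$.

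First I would check that $\F_\beta(M)\in\CC_\g$ for every finite-dimensional graded $M$, say with $|\beta|=n$. Choose $j$ with $x_k^jM=0$ for all $k$. Since $\bV[\beta]$ is a right $R^J(\beta)$-module and the $x_k$ commute with the idempotents $e(\nu)$, we have $vx_k^j\otimes m=v\otimes x_k^jm=0$ in $\F_\beta(M)$, so the canonical surjection $\bV[\beta]\otimes_\cor M\twoheadrightarrow\F_\beta(M)$ factors through $\Bigl(\soplus\nolimits_{\nu\in J^\beta}\hV_\nu e(\nu)/(x^j)\Bigr)\otimes_\cor M$. Now $\hV_\nu e(\nu)\cong\Oh_{\mathbb{T}^n,X(\nu)}\otimes_\cor\bigl(V_{\nu_1}\otimes\cdots\otimes V_{\nu_n}\bigr)$, with $x_ke(\nu)$ acting by multiplication by the local parameter $X(\nu_k)^{-1}(X_k-X(\nu_k))$; hence $\hV_\nu e(\nu)/(x^j)\cong\bigl(\Oh_{\mathbb{T}^n,X(\nu)}/(x^j)\bigr)\otimes_\cor\bigl(V_{\nu_1}\otimes\cdots\otimes V_{\nu_n}\bigr)$ is finite-dimensional, and $J^\beta$ is finite, so $\F_\beta(M)$ is finite-dimensional. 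Finally, $\bV[\beta]$ inherits a $P_\cl$-weight space decomposition from the $V_{\nu_k}$ (the affinization $(V_{\nu_k})_\aff$ having the same $P_\cl$-weights as $V_{\nu_k}$), hence so does its quotient $\F_\beta(M)$; a finite-dimensional $\uqpg$-module with a weight space decomposition is automatically integrable, so $\F_\beta(M)\in\CC_\g$.

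Next I would produce the monoidal structure. The isomorphism $\F(R^J(0))\simeq\cor$ is clear, since $R^J(0)=\cor$ and $\bV[0]=\cor$. For $\beta=\beta_1+\beta_2$ with $|\beta_i|=n_i$, unwinding the definitions and \eqref{eq:embedding} gives $\F(M_1\conv M_2)\simeq\bigl(\bV[\beta]\,e(\beta_1,\beta_2)\bigr)\otimes_{R^J(\beta_1)\otimes R^J(\beta_2)}(M_1\otimes M_2)$. Using $\Oh_{\mathbb{T}^{n_1+n_2},X(\mu*\lambda)}=\Oh_{\mathbb{T}^{n_1},X(\mu)}\mathbin{\widehat{\otimes}}\Oh_{\mathbb{T}^{n_2},X(\lambda)}$ and $V_{\mu*\lambda}=V_\mu\otimes_\cor V_\lambda$, one identifies $\bV[\beta]\,e(\beta_1,\beta_2)$ with the completed tensor product $\bV[\beta_1]\mathbin{\widehat{\otimes}}\bV[\beta_2]$ as a $(\uqpg,\,R^J(\beta_1)\otimes R^J(\beta_2))$-bimodule; here the point is that, under \eqref{eq:embedding}, the right $R^J(\beta_1)\otimes R^J(\beta_2)$-action involves only those intertwiners $\tau_a$ internal to one of the two blocks, acting there by exactly the same formulas as on the corresponding factor $\bV[\beta_i]$. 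Since $M_1\otimes M_2$ is annihilated by $(x^j)$ for $j\gg0$, and modulo $(x^j)$ the completed tensor product coincides with the ordinary one (because $\bigl(\Oh_{\mathbb{T}^{n_1}}/(x^j)\bigr)\otimes_\cor\bigl(\Oh_{\mathbb{T}^{n_2}}/(x^j)\bigr)\cong\Oh_{\mathbb{T}^{n_1+n_2}}/(x^j)$), I obtain
$$\F(M_1\conv M_2)\simeq\bigl(\bV[\beta_1]\otimes_\cor\bV[\beta_2]\bigr)\otimes_{R^J(\beta_1)\otimes R^J(\beta_2)}(M_1\otimes M_2)\simeq\F(M_1)\otimes_\cor\F(M_2),$$
naturally in $M_1,M_2$ and $\uqpg$-linearly (both $\uqpg$-actions being given by the iterated coproduct). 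The coherence diagrams of \cite[A.1.2]{KKK13A} are checked in the same manner for triples: both the associator of $\conv$ and that of $\otimes$ on $\CC_\g$ reduce, via these comparison isomorphisms, to the strict associativity of $\bV[\beta_1+\beta_2+\beta_3]\,e(\beta_1,\beta_2,\beta_3)\simeq\bV[\beta_1]\mathbin{\widehat{\otimes}}\bV[\beta_2]\mathbin{\widehat{\otimes}}\bV[\beta_3]$, so the hexagons commute, and the unit axioms are similar.

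The step I expect to be the main obstacle is the bimodule identification $\bV[\beta]\,e(\beta_1,\beta_2)\simeq\bV[\beta_1]\mathbin{\widehat{\otimes}}\bV[\beta_2]$: one has to handle the completions $\Oh_{\mathbb{T}^n,X(\nu)}$ carefully and confirm that the right $R^J(\beta)$-action — whose intertwiners $\tau_a$ are a priori defined only after inverting $x_a-x_{a+1}$, i.e.\ on $\bVK[\beta]$ — respects the block decomposition, so that tensoring with a finite-dimensional graded module legitimately collapses the completed tensor product to an ordinary one. The well-definedness of the right $R^J(\beta)$-action on the integral form $\bV[\beta]$ itself is the theorem recalled above, which I take for granted.
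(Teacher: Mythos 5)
The paper itself gives no proof of this theorem: it is quoted from \cite{KKK13A}, where it is established by exactly the kind of argument you sketch. Your reconstruction is correct in both of its main steps, and you have put your finger on the right technical points. For the statement $\F_\beta(M)\in\CC_\g$, the observation that $x_k$ has positive degree, hence acts nilpotently on a finite-dimensional graded module, is precisely what makes the canonical surjection from $\bV[\beta]\otimes_\cor M$ factor through a finite-dimensional quotient of the completion; and the $P_\cl$-weight decomposition passes to $\F_\beta(M)$ because the right $R^J(\beta)$-action commutes with $\uqpg$ and therefore preserves weight spaces, after which local nilpotence of $e_i,f_i$ is automatic in finite dimension. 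For the monoidal structure, the reduction $\F(M_1\conv M_2)\simeq\bigl(\bV[\beta]e(\beta_1,\beta_2)\bigr)\otimes_{R^J(\beta_1)\otimes R^J(\beta_2)}(M_1\otimes M_2)$ is a formal consequence of the definition of $\conv$, and the bimodule identification $\bV[\beta]e(\beta_1,\beta_2)\simeq\bV[\beta_1]\mathbin{\widehat\otimes}\bV[\beta_2]$ holds because the image of $R^J(\beta_1)\otimes R^J(\beta_2)$ under \eqref{eq:embedding} involves only the block-internal intertwiners $\tau_a$ with $a\ne n_1$, and $R^\nu_{a,a+1}$, $P_{\nu_a,\nu_{a+1}}(x_a,x_{a+1})$ and $(x_a-x_{a+1})^{-1}$ all touch only the relevant block. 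Your passage from the completed to the ordinary tensor product after tensoring over $R^J(\beta_1)\otimes R^J(\beta_2)$ with $M_1\otimes M_2$ is also justified by the same nilpotence, via the isomorphism $\bigl(\Oh_{\mathbb{T}^{n_1}}/(x^j)\bigr)\otimes_\cor\bigl(\Oh_{\mathbb{T}^{n_2}}/(x^j)\bigr)\cong\Oh_{\mathbb{T}^{n_1+n_2}}/(x^j)$. One small terminological slip: the coherence diagrams for a monoidal functor that \cite[A.1.2]{KKK13A} imposes are the associativity square and the two unit triangles, not hexagons (those belong to a braided structure); but your underlying argument, that both associators become trivial once everything is identified with $\bV[\beta_1+\beta_2+\beta_3]\,e(\beta_1,\beta_2,\beta_3)$ and the relevant block idempotents, is the right one.
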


The following theorem can be proved in a similar way as in \cite[Theorem 4.1]{KP15} and we omit the proof.

\begin{theorem}
Assume that the functor $\F$ in {\rm Theorem \ref{thm:conv to tensor}} is exact. 
\bnum
\item For any simple module $M$ in $R^J( \beta) \gmod$, $\F(M)$ is either a simple module or a zero module. In particular, if $M$ is a real simple module and $\F(M)$ is non-zero, then $\F(M)$ is a real simple module.
\item Let $M$ and $N$ be simple modules  in $R^J \gmod$, and assume that one of them is real. Then $\F(M \hconv N )$ is zero or isomorphic to $\F(M)\hconv \F(N)$.
\end{enumerate}
\end{theorem}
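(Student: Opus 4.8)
The plan is to reduce both assertions to the one-dimensionality of the $R$-matrix $\Hom$-spaces in $\CC_\g$ (Lemma~\ref{lem:runi}) together with the exactness of $\F$, the induction being on $|\beta|$ for part~(1).

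\emph{Part (1).} I would argue by induction on $|\beta|$. When $|\beta|\le 1$ the module $\F(M)$ is either $\cor$ (Theorem~\ref{thm:conv to tensor}) or a grading shift of some $(V_i)_{X(i)}$ (Proposition~\ref{prop:image of tau}(i)), which is good, hence real simple. For $|\beta|\ge 1$: since each $x_k$ is homogeneous of positive degree it acts nilpotently, hence by $0$, on the finite-dimensional simple module $M$, so $M$ is a quotient of $L(\nu_1)\conv\cdots\conv L(\nu_n)$ for any $\nu\in J^{\beta}$ with $e(\nu)M\ne 0$. Writing this product as $L(\nu_1)\conv N'$, using that $L(\nu_1)\conv(-)$ is exact (the relevant $R^J$-bimodule is free over $R^J(\alj_{\nu_1})\tens R^J(\beta-\alj_{\nu_1})$) and that $L(\nu_1)$ is a real simple module, a radical-shrinking argument yields a simple $N\in R^J(\beta-\alj_{\nu_1})\gmod$ with $M$ a quotient of $L(\nu_1)\conv N$; as $L(\nu_1)$ is real, $L(\nu_1)\conv N$ has simple head by Lemma~\ref{lem:commute_equiv}, so $M\simeq L(\nu_1)\hconv N$. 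Now suppose $\F(M)\ne 0$. Since $\F$ is exact and monoidal, $\F(M)$ is a nonzero quotient of $\F(L(\nu_1))\tens\F(N)$, so $\F(N)\ne 0$ and hence $\F(N)$ is simple by the induction hypothesis, while $\F(L(\nu_1))\simeq(V_{\nu_1})_{X(\nu_1)}$ is real simple. By exactness, $\F(\rmat{L(\nu_1),N})$ has image $\F\bl\Im\rmat{L(\nu_1),N}\br=\F(M)\ne 0$, so it is a nonzero element of $\Hom_{\uqpg}\bl\F(L(\nu_1))\tens\F(N),\F(N)\tens\F(L(\nu_1))\br$, which equals $\cor\,\rmat{\F(L(\nu_1)),\F(N)}$ by Lemma~\ref{lem:runi}. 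Therefore $\F(M)=\Im\rmat{\F(L(\nu_1)),\F(N)}=\F(L(\nu_1))\hconv\F(N)$, which is simple by Lemma~\ref{lem:commute_equiv_qa}. The ``in particular'' claim then follows by applying this to the simple module $M\conv M$: if $M$ is real and $\F(M)\ne 0$, then $\F(M\conv M)\simeq\F(M)\tens\F(M)$ is nonzero and simple, i.e.\ $\F(M)$ is real.

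\emph{Part (2).} One may assume $M$ is real. By Lemma~\ref{lem:commute_equiv}, $M\hconv N$ is simple and equals $\Im\rmat{M,N}$ for $\rmat{M,N}\col M\conv N\to N\conv M$, so by part~(1) the module $\F(M\hconv N)$ is zero or simple; if it is zero there is nothing to prove. Otherwise it is a nonzero quotient of $\F(M\conv N)\simeq\F(M)\tens\F(N)$, so $\F(M)$ and $\F(N)$ are nonzero, hence simple by part~(1) with $\F(M)$ real. Exactly as in part~(1), exactness makes $\F(\rmat{M,N})\col\F(M)\tens\F(N)\to\F(N)\tens\F(M)$ a nonzero morphism, so it is a nonzero multiple of $\rmat{\F(M),\F(N)}$ by Lemma~\ref{lem:runi}, whence $\F(M\hconv N)=\Im\rmat{\F(M),\F(N)}=\F(M)\hconv\F(N)$ by Lemma~\ref{lem:commute_equiv_qa}.

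The one step that needs genuine care is the set-up of the induction in part~(1) — that every simple $M$ with $\beta\ne 0$ is the head of some $L(i)\conv N$ with $N$ simple and $|N|<|\beta|$. As indicated above this rests only on the exactness of $L(i)\conv(-)$, the realness of $L(i)$ (a one-vertex computation), and Lemma~\ref{lem:commute_equiv}; the radical-shrinking step is: if $L(i)\conv N'\twoheadrightarrow M$, then either the image of $L(i)\conv\mathrm{rad}(N')$ is all of $M$, in which case one replaces $N'$ by $\mathrm{rad}(N')$ and repeats (this terminates by finite length), or it is $0$, in which case $M$ is a quotient of $L(i)\conv\hd(N')=\bigoplus_j q^{a_j}\,L(i)\conv N_j$ with each $N_j$ simple, so $M$ is a quotient of some $L(i)\conv N_j$. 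Once this is available the remainder is formal: exactness of $\F$ turns the $R$-matrices of quiver Hecke modules into nonzero morphisms between tensor products, these are forced to be proportional to $R$-matrices since the ambient $\Hom$-spaces are one-dimensional, and Lemmas~\ref{lem:commute_equiv} and~\ref{lem:commute_equiv_qa} identify images with heads; notably, no compatibility of $\F$ with renormalized $R$-matrices beyond the generators treated in Proposition~\ref{prop:image of tau} is required.
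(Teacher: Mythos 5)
The paper does not actually prove this theorem; it states that it "can be proved in a similar way as in [KP15, Theorem 4.1] and we omit the proof," so there is no in-paper argument to compare against word-for-word. Your blind reconstruction is correct and uses exactly the standard template of that literature: reduce a simple $M$ to the head of $L(\nu_1)\conv N$ with $N$ simple of strictly smaller $|\beta|$, use exactness of $\F$ so that $\F$ of the $\rmat{}$-image equals the image of $\F(\rmat{})$, and then invoke one-dimensionality of $\Hom(V\tens W,W\tens V)$ (Lemma~\ref{lem:runi}) to force $\F(\rmat{M,N})$ to be a nonzero scalar multiple of $\rmat{\F(M),\F(N)}$, so that Lemma~\ref{lem:commute_equiv_qa} identifies the image with $\F(M)\hconv\F(N)$. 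This is, up to presentation, the same circle of ideas as the cited Kashiwara--Park argument and as Lemma~\ref{lem:head} in the paper, so the route is not genuinely different.

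One justification is slightly off, though it does not affect the conclusion: it is not true that each $x_k$ acts by $0$ on a finite-dimensional simple $R(\beta)$-module (for instance on the two-dimensional simple nilHecke module over $R(2\alj_i)$, $x_1$ is nonzero nilpotent). What nilpotence of the commuting operators $x_k$ (together with $[x_k,e(\nu)]=0$) does give is a nonzero vector $v\in e(\nu)M$, for some $\nu$, with $x_kv=0$ for all $k$; the cyclic map $R(\beta)e(\nu)\to M$ sending $e(\nu)\mapsto v$ then kills the ideal generated by the $x_k$'s and factors through $L(\nu_1)\conv\cdots\conv L(\nu_n)\epito M$, which is all the subsequent radical-shrinking argument needs. (The phrase "for any $\nu$ with $e(\nu)M\ne0$" should likewise be weakened to "for some $\nu$".) Everything else — biexactness of convolution via freeness of $R(\beta)e(\alj_{\nu_1},\beta-\alj_{\nu_1})$, reality of $L(\nu_1)$ and of the fundamental representations $\F(L(\nu_1))$, the base case $|\beta|\le 1$, and the deduction of reality of $\F(M)$ from $\F(M\conv M)\simeq\F(M)^{\tens 2}$ being simple — is sound.
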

Hence if the functor  $\F$ is exact, then $\F$ induces a surjective map from
a subset of the isomorphism classes of simple subquotients of a module $M \in R^J\gmod$  to the set of the isomorphism classes of
 simple subquotients of $\F(M)$.  Hence we have the following corollary.
\begin{corollary} \label{cor:subquotients}
Assume that the functor $\F$ in {\rm Theorem \ref{thm:conv to tensor}}
is exact and assume that $M_1,M_2,\ldots,M_r$ are modules in  $R^J\gmod$.
Then, for any simple subquotient $V$ of $\F(M_1)\tens F(M_2)\tens \cdots \tens \F(M_r)$,
there exists a simple subquotient $L$ of $M_1\conv M_2 \conv \cdots \conv M_r$ such that 
$V \simeq \F(L)$.
\end{corollary}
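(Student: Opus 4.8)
The plan is to reduce everything to the single convolution product $M\seteq M_1\conv M_2\conv\cdots\conv M_r$ and then exploit exactness of $\F$ together with the tensor structure. First I would observe that, since the $M_i$ lie in $R^J\gmod$, the convolution $M$ is again finite-dimensional, hence of finite length; and by Theorem \ref{thm:conv to tensor} there is a $\uqpg$-module isomorphism $\F(M)\simeq\F(M_1)\tens\F(M_2)\tens\cdots\tens\F(M_r)$. So it suffices to prove that every simple subquotient of $\F(M)$ is of the form $\F(L)$ for some simple subquotient $L$ of $M$.

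Next I would choose a composition series $0=M^{(0)}\subset M^{(1)}\subset\cdots\subset M^{(k)}=M$ in $R^J\gmod$, with simple quotients $L^{(i)}=M^{(i)}/M^{(i-1)}$. Applying the exact functor $\F$ produces a filtration $0=\F(M^{(0)})\subset\F(M^{(1)})\subset\cdots\subset\F(M^{(k)})=\F(M)$ whose successive quotients are the $\F(L^{(i)})$. By the theorem immediately preceding this corollary, each $\F(L^{(i)})$ is either a simple module or zero. Discarding the indices with $\F(L^{(i)})=0$, the remaining filtration is an honest composition series of $\F(M)$ in $\CC_\g$, with composition factors lying among $\set{\F(L^{(i)})}{\F(L^{(i)})\neq 0}$.

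Finally, given a simple subquotient $V$ of $\F(M_1)\tens\cdots\tens\F(M_r)\simeq\F(M)$, the Jordan--Hölder theorem in the finite-length category $\CC_\g$ forces $V$ to be isomorphic to one of those composition factors, i.e.\ $V\simeq\F(L^{(j)})$ for some $j$; and $L^{(j)}$, being a composition factor of $M=M_1\conv\cdots\conv M_r$, is in particular a simple subquotient of it, which is exactly what is claimed.

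I do not expect a genuine obstacle here: the corollary is a formal consequence of the exactness of $\F$, its compatibility with $\conv\mapsto\tens$ from Theorem \ref{thm:conv to tensor}, and the essential input --- established in the theorem just above --- that $\F$ sends a simple module to a simple module or to zero. The only points deserving a word of care are the finiteness of length of $M$ and of $\F(M)$ (so that the Jordan--Hölder argument applies) and the remark that deleting the vanishing subquotients of the image filtration still leaves a composition series.
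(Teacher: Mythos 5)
Your proof is correct and follows the same line the paper sketches: it combines the exactness of $\F$, the isomorphism $\F(M_1\conv\cdots\conv M_r)\simeq\F(M_1)\tens\cdots\tens\F(M_r)$ from Theorem \ref{thm:conv to tensor}, and the preceding theorem that $\F$ sends a simple module to a simple module or to zero, so that a composition series of $M_1\conv\cdots\conv M_r$ maps (after deleting vanishing quotients) to a composition series of $\F(M_1)\tens\cdots\tens\F(M_r)$, and Jordan--H\"older identifies $V$ with some $\F(L)$. This is exactly the argument implicit in the paper's remark immediately before the corollary.
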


Assume that the functor $\F$ in Theorem \ref{thm:conv to tensor} is exact. 
Let us denote by $\CC_J$ the full subcategory of  $\CC_\g$  consisting of $V$ such that
every composition factor of $V$ appears as a composition factor of a tensor product of modules of 
the form $(V_i)_{X(i)} \simeq \F(L(i))$ $(i \in J)$. 
In other words,
$\CC_J$ is the smallest
abelian subcategory of  $\CC_\g$ which 
contains  all $(V_i)_{X(i)}$'s,
and is  stable under taking submodules, quotients, extensions 
and tensor products. 
Hence we have
\eqn 
\F \col \soplus_{\beta \in \rootl^+_J} R^J( \beta) \gmod \to \CC_J \subset \CC_\g.
\eneqn

The following lemma will be used in the next section.

\begin{lemma} \label{lem:head}
Assume that $\F$ is exact.
Let $M$, $N\in R^J \gmod$ be simple modules, and assume that one of them is real.
We assume further that $\de(M,N)\le1$.
Assume that $\F(M),\F(N) \neq 0$ and $\F(M)\otimes \F(N)$ is not simple. 
Then the homomorphism $\F(\rmat{M,N})$ is equal to $\rmat{\F(M),\F(N)}$ up to a non-zero constant multiple,
and we have $\F(M\hconv N) \simeq \F(M)\hconv \F(N)$. In particular, it is
a simple module.
\end{lemma}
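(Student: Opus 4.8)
The plan is to exploit the compatibility between the duality functor $\F$ and $R$-matrices established in Proposition~\ref{prop:image of tau}(ii), together with the length-two structure of $M \conv N$ when $\de(M,N) \le 1$ coming from Lemma~\ref{lem:linked}. First I would reduce to the case $\de(M,N) = 1$: if $\de(M,N) = 0$ then $M \conv N$ is simple, hence $\F(M) \tens \F(N) \simeq \F(M \conv N)$ is simple or zero by the previous theorem, contradicting the hypothesis that $\F(M) \tens \F(N)$ is not simple. So we may assume $\de(M,N) = 1$, and by Lemma~\ref{lem:linked}(ii) there is a short exact sequence
$$0 \to M \sconv N \to M \conv N \to M \hconv N \to 0,$$
with $\rmat{M,N} \col M \conv N \to q^{-\Lambda(M,N)} N \conv M$ having image $M \hconv N$ (up to grading shift) by Lemma~\ref{lem:commute_equiv}(ii).

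Next I would apply the exact functor $\F$ to this picture. Exactness gives an exact sequence $0 \to \F(M \sconv N) \to \F(M) \tens \F(N) \to \F(M \hconv N) \to 0$ (using the tensor structure of Theorem~\ref{thm:conv to tensor}), and applying $\F$ to $\rmat{M,N}$ produces a homomorphism $\F(\rmat{M,N}) \col \F(M) \tens \F(N) \to \F(N) \tens \F(M)$ whose image is $\F(M \hconv N)$, since $\F$ is exact and the image of $\rmat{M,N}$ is $M \hconv N$. Now $\F(M \hconv N)$ is nonzero: otherwise $\F(M) \tens \F(N) \simeq \F(M \sconv N)$ would be a simple $R^J$-module image, hence simple or zero, contradicting the non-simplicity hypothesis. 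So $\F(\rmat{M,N})$ is a nonzero element of $\Hom(\F(M) \tens \F(N), \F(N) \tens \F(M))$. By Lemma~\ref{lem:runi}, this Hom-space is one-dimensional, spanned by $\rmat{\F(M),\F(N)}$ (one of $\F(M), \F(N)$ is real by the previous theorem's part (i), since $M$ or $N$ is real and its image is nonzero). Therefore $\F(\rmat{M,N})$ is a nonzero scalar multiple of $\rmat{\F(M),\F(N)}$.

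Finally, comparing images: the image of $\F(\rmat{M,N})$ equals $\F(M \hconv N)$ on one hand, and equals the image of $\rmat{\F(M),\F(N)}$ on the other; by Lemma~\ref{lem:commute_equiv_qa}(ii) the latter is the head $\F(M) \hconv \F(N)$. Hence $\F(M \hconv N) \simeq \F(M) \hconv \F(N)$, and since $\F(M) \tens \F(N)$ is non-simple with $\F(M), \F(N) \ne 0$ both simple (or one of them real), the head $\F(M) \hconv \F(N)$ is a genuine simple quotient, so in particular $\F(M \hconv N)$ is simple. The main subtlety to be careful about is ruling out $\F(M \hconv N) = 0$ and correctly transporting the "image of $\rmat{M,N}$ is $M \hconv N$" statement through the exact functor $\F$ — one must check that $\F$ applied to the canonical surjection $M \conv N \twoheadrightarrow M \hconv N$ is again surjective (immediate from exactness) and that $\F$ applied to the inclusion $M \hconv N \hookrightarrow q^{-\Lambda(M,N)} N \conv M$ is injective, so that $\Img \F(\rmat{M,N}) = \F(\Img \rmat{M,N}) = \F(M \hconv N)$; this is where the exactness hypothesis is essential and is the only place any real care is needed. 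There is also a minor compatibility check that $\F$ intertwines the $R$-matrix $\rmat{M,N}$ on the quiver Hecke side with an $R$-matrix on the quantum affine side up to the invertible factor coming from the \dcf, which follows by combining Proposition~\ref{prop:image of tau}(ii) with the definitions of $\rmat{-,-}$ on both sides; I would cite this rather than reprove it.
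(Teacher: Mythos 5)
Your proof is correct and follows essentially the same route as the paper: reduce to $\de(M,N)=1$, apply the exact functor $\F$ to the short exact sequence from Lemma~\ref{lem:linked}(ii), deduce that $\F(\rmat{M,N})$ is nonzero by ruling out $\F(M\hconv N)=0$, and invoke Lemma~\ref{lem:runi} for the one-dimensionality of the relevant $\Hom$-space. One small caveat: the closing remark about intertwining $R$-matrices via Proposition~\ref{prop:image of tau}(ii) is not actually needed here (and that proposition only addresses the one-letter modules $L(i)$, not arbitrary simples $M,N$), since the identification of $\F(\rmat{M,N})$ with $\rmat{\F(M),\F(N)}$ up to scalar already follows abstractly from nonvanishing plus $\dim\Hom\bigl(\F(M)\tens\F(N),\F(N)\tens\F(M)\bigr)=1$.
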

\begin{proof}
If $\de(M,N)=0$, then $\F(M)\otimes \F(N)$ is simple, and we need nothing to prove.
Assume that $\de(M,N)=1$.
Then we have exact sequences (ignoring grading shifts)
\eqn
0\to N\hconv M\to M\conv N\to M\hconv N\to0,\\
0\to M\hconv N\to N\conv M\to N\hconv M\to0
\eneqn
by Lemma~\ref{lem:linked}.
Applying the exact functor $\F$, we obtain an exact sequence
$$0\to \F(N\hconv M)\to \F(M)\tens \F(N)\to \F(M\hconv N)\to0.$$
Note that $\F(N\hconv M)$ and  $\F(M\hconv N)$ are simple or zero.
Since the length of $\F(M)\tens \F(N)$ is at least two by the assumption,
we conclude that $\F(N\hconv M)$ and  $\F(M\hconv N)$ are simple modules.
Hence $\F(\rmat{M,N})$ which is the composition
$\F(M)\tens \F(N)\epito \F(M\hconv N)\monoto \F(N)\tens \F(M)$,
does not vanish.
Hence it is equal to $\rmat{\F(M),\F(N))}$ up to a non-zero constant multiple
by Lemma~\ref{lem:runi}.
\end{proof}

\section{Quantum affine algebra of type B and   duality functors}\label{sec:F2}
In this section, we will construct and study a generalized quantum affine Schur Weyl duality functor whose codomain is the module category of quantum affine algebra of type $B$.

\subsection{Quantum affine algebra of type B${}^{(1)}_n$}
We recall the quantum affine algebra of type $B^{(1)}_n$ and fix the conventions. 
  From now on, we fix $n \ge 2$ and set $N=2n$. 
Let $\g$ be the affine Kac-Moody algebra of type $B^{(1)}_n$.
The index set of simple roots is given by  $I=\{0,1,\ldots, n\}$ and we have $I_0=\{1,2,\ldots, n\}$.
The  Dynkin diagrams and the fundamental weights are 
 given as follows:
 
\eqn  \begin{tabular}{c|c|p{5cm}}
Type& Dynkin diagram &  Fundamental weights\\
\hline \hline
\raisebox{-2em}{$B_{n}^{(1)}$ ($n \ge 3$)}
 &
$
\xymatrix@R=3ex{ *{\circ}<3pt> \ar@{-}[dr]^<{0}   \\ 
&*{\circ}<3pt> \ar@{-}[r]_<{2}  & {} \ar@{.}[r]
& *{\circ}<3pt> \ar@{-}[r]_>{\,\,\,\ n-1} &*{\circ}<3pt>\ar@{=>}[r]_>{\,\,\,\,n} &*{\circ}<3pt>  \\
*{\circ}<3pt> \ar@{-}[ur]_<{1} }
$
  & 
\raisebox{-0.5em}{$\vp_1=\cl( \Lambda_1-\Lambda_0)$}  \newline
 \raisebox{-0.5em}{$\vp_i =\cl(\Lambda_i-2\Lambda_0) \ (2\le i \le n-1)$} \newline
\raisebox{-0.5em}{$\vp_n =\cl(\Lambda_n-\Lambda_0) $}
\\

\hline
\raisebox{-1em}{$B_{2}^{(1)}$} &
$$\xymatrix@R=3ex{*{\circ}<3pt>  \ar@{=>}[r]_<{\alpha_0} & *{\circ}<3pt> \ar@{<=}[r]_<{\alpha_2} \ar@{}[r]_>{\,\,\,\ \alpha_1}  & *{\circ}<3pt>}$$
 & $\vp_1=\cl( \Lambda_1-\Lambda_0)$  \newline
 $\vp_2 =\cl(\Lambda_2-\Lambda_0 )$ \\
\hline
\end{tabular}
\eneqn

Note that the Dynkin diagram of type $B^{(1)}_2$ in the table above
is denoted by $C^{(1)}_2$ in \cite{Kac}.

By \eqref{eq:normal},
We have 
\eqn 
(\alpha_i,\alpha_i )= \begin{cases}
2 &\text{if } \ i = 0,1,\ldots, n-1,\\
1 &\text{if } \ i = n.
\end{cases}
\eneqn
We set
\eq
q_s:=q_n=q^{1/2} \quad \text{and} \quad q_t:=(-1)^{n+1}q_s^{2n+1}.
\eneq

The null root and the canonical central element are given by
\eqn&&\ba{llllcl}
\delta&=&\al_0+\al_1+&2(\al_2+\cdots+\al_{n-1}&+&\al_n),\\
c&=&h_0+h_1+&2(h_2+\cdots+h_{n-1})&+&h_n.
\ea
\eneqn
It follows that
\eq
p^*=q^{2n-1}.
\eneq

The fundamental representations of $U_q'(B_n^{(1)})$ are of the form $V(\varpi_i)_x$ with  $1 \le i \le n$ and $x \in \cor^\times$.
For $k > n +1 $ or $k < 0$, $V(\varpi_{k})$ is understood to be zero, and the modules
$V(\varpi_{0})$ and $V(\varpi_{n+1})
$ are understood to be the trivial representation.

Recall that $d_{V(\vpi(i),V(\vpi_j)}(z)$ 
denotes  the denominator of the normalized R-matrix $\Rnorm_{V(\varpi_i),V(\varpi_j)_z}$.
The following formula of the denominators are given in \cite{Oh14}.

\begin{prop}[\cite{Oh14}] \label{prop:denominator}
For $1\leq k,l \leq n-1$, we have
\begin{align}
  d_{V(\varpi_k), V(\varpi_l)}(z)  =\prod_{s=1}^{\min (k,l)} (z-(-1)^{k+l}q^{|k-l|+2s})(z-(-1)^{k+l}q^{2n-k-l-1+2s}).
\end{align}

      For $1 \leq k \leq n-1$, we have
  \begin{align}
    d_{V(\varpi_k), V(\varpi_n)}(z) = \prod_{s=1}^{k}(z-(-1)^{n+k}q_s^{2n-2k-1+4s}).
  \end{align}

Finally,  we have
\begin{align}
  d_{V(\varpi_n), V(\varpi_n)}(z)&=\prod_{s=1}^{n} (z-(q_s)^{4s-2}).\label{eq:denonn}
\end{align}
\end{prop}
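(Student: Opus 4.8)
The plan is to deduce these formulas from the general machinery computing denominators of normalized $R$-matrices for quantum affine algebras of classical type, following the strategy of \cite{AK, KKK13B} and, most directly, the companion computations in \cite{Oh14}. First I would recall that $d_{V(\varpi_k),V(\varpi_l)}(z)$ is characterized by the property that its zeros (with multiplicity) are exactly the poles of $\Rnorm_{V(\varpi_k),V(\varpi_l)_z}$; equivalently, by \cite[Corollary 2.4]{AK}, a simple root of $d_{V(\varpi_k),V(\varpi_l)}(z)$ at $z=a$ forces $V(\varpi_k)\tens V(\varpi_l)_a$ to be reducible, and the order of the zero records the length/composition structure of the tensor product. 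The general approach is therefore: (i) realize each fundamental module $V(\varpi_k)$ ($1\le k\le n-1$) inside a tensor power $V(\varpi_1)^{\tens k}$ (as the "top" constituent, via a suitable Kashiwara--Nakashima-type crystal/quantum exterior power construction for type $B^{(1)}_n$), and realize $V(\varpi_n)$ as the spin representation; (ii) reduce the computation of $d_{V(\varpi_k),V(\varpi_l)}$ to knowledge of the two "seed" denominators $d_{V(\varpi_1),V(\varpi_1)}(z)$ and $d_{V(\varpi_1),V(\varpi_n)}(z)$ together with the behaviour of $R$-matrices under taking heads/socles of tensor products; (iii) carry out the induction on $k$ and $l$.

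The key steps, in order, are as follows. \emph{Step 1:} Establish the seed formula for $d_{V(\varpi_1),V(\varpi_1)}(z)$ directly from the $R$-matrix on the $(2n+1)$-dimensional vector representation of $U_q'(B^{(1)}_n)$; here the poles occur at $z = q^2$ (from the "wedge" direction) and at $z = q^{2n-1} = p^*$ (from the "trace"/contraction direction giving the trivial module), matching $\prod_{s=1}^{1}(z-q^{2+0})(z-q^{2n-3+2}) = (z-q^2)(z-q^{2n-1})$. \emph{Step 2:} Establish $d_{V(\varpi_1),V(\varpi_n)}(z)$ using the known structure of $V(\varpi_1)\tens V(\varpi_n)$ (the vector tensor spin), whose single pole at $z=(-1)^{n+1}q_s^{2n-1}$ with $q_s=q^{1/2}$ matches $\prod_{s=1}^{1}(z-(-1)^{n+1}q_s^{2n-3+4})=(z-(-1)^{n+1}q_s^{2n+1})$. \emph{Step 3:} For general $1\le k,l\le n-1$, argue inductively: since $V(\varpi_{k})$ is the head of $V(\varpi_{k-1})\tens V(\varpi_1)$ up to spectral shift, use the multiplicativity/compatibility of normalized $R$-matrices under $\Rnorm$ for tensor factors (the "$d$-invariant" formalism and the fact that $\Rnorm_{M\hconv N, L}$ divides $\Rnorm_{M,L}\Rnorm_{N,L}$ appropriately, cf. \cite[\S3]{AK}) to express $d_{V(\varpi_k),V(\varpi_l)}$ as the "new" factors appearing when passing from $(k-1,l)$ to $(k,l)$; collecting these gives the stated double product $\prod_{s=1}^{\min(k,l)}(z-(-1)^{k+l}q^{|k-l|+2s})(z-(-1)^{k+l}q^{2n-k-l-1+2s})$. \emph{Step 4:} Similarly handle $d_{V(\varpi_k),V(\varpi_n)}$ and finally $d_{V(\varpi_n),V(\varpi_n)}(z)=\prod_{s=1}^n (z-q_s^{4s-2})$, the latter coming from the decomposition of $V(\varpi_n)\tens V(\varpi_n)$ into $\bigoplus_{j=0}^{n}V(\varpi_{n-2j}\text{-ish})$-type constituents, each contributing one pole.

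The main obstacle is \emph{Step 3}: making the inductive passage rigorous requires controlling precisely which poles of $\Rnorm_{V(\varpi_{k-1}),V(\varpi_l)_z}\cdot\Rnorm_{V(\varpi_1),V(\varpi_l)_{z'}}$ survive and which cancel when one restricts to the subquotient $V(\varpi_k)$, i.e.\ proving that the order of the zero of $d_{V(\varpi_k),V(\varpi_l)}$ at each candidate point is \emph{exactly} what the formula predicts, not merely $\le$. This is the standard difficulty in all such denominator computations; it is resolved by combining an upper bound coming from the tensor-product realization with a lower bound coming from an explicit reducibility argument (exhibiting, at each predicted zero, a genuine nonsplit extension or a nontrivial $R$-matrix kernel), together with a dimension/character count showing the two bounds agree. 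Since this is exactly the content of \cite{Oh14}, in the paper itself the cleanest route is to cite that reference for the full verification; the sketch above indicates the structure of that argument. The remaining bookkeeping—checking the sign $(-1)^{k+l}$ and the exponent arithmetic against $\delta = \al_0+\al_1+2(\al_2+\cdots+\al_n)$, $p^*=q^{2n-1}$, and $q_s=q^{1/2}$—is routine.
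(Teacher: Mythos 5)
The paper does not prove this proposition: it is stated as a citation to \cite{Oh14}, with no argument supplied in the present text. Your proposal correctly recognizes this and gives a plausible reconstruction of the strategy used in that reference (seed denominators for $d_{V(\varpi_1),V(\varpi_1)}$ and $d_{V(\varpi_1),V(\varpi_n)}$, then induction via surjections of Dorey type as in Proposition~\ref{prop:Dorey}, with an upper bound from the tensor realization matched against a lower bound from explicit reducibility). Your arithmetic sanity checks at $k=l=1$ and $(k,n)=(1,n)$ are correct, and the observation that the single pole of $d_{V(\varpi_1),V(\varpi_n)}$ sits at $(-1)^{n+1}q_s^{2n+1}=q_t$ lines up with the notation fixed in the paper. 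Since the paper's ``proof'' is only a reference, there is nothing in the text to compare your Step~3 against; the honest statement is that the inductive exactness of the pole count is precisely what \cite{Oh14} verifies, and citing it (as you suggest at the end) is what the paper does.
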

Note that $d_{V(\vpi_i),V(\vpi_j)}(z)=d_{V(\vpi_j),V(\vpi_i)}(z)$.

\begin{prop}[\cite{Oh14}] \label{prop:Dorey}
There exist surjective $U'_q(B_n^{(1)})$-module homomorphisms
\eq
\hs{3ex}&& V(\varpi_k)_{(-q)^{-l}} \otimes V(\varpi_l)_{(-q)^{k}}
  \epito V(\varpi_{k+l})\ 
\text{for $1 \leq k,l\leq n-1$ with $k+l\le n-1$,} 
\label{eq:Doreykl}\\[1ex]
  && V(\varpi_n)_{q^{k+1}} \otimes V(\varpi_n)_{q^{N-k}}\epito 
V(\varpi_{k})_{(-1)^{k+1}q_t}\ \text{for $1 \leq k \leq n-1$,}
\label{eq:Doreynn}\\[1ex] 
&&V(\varpi_n)\tens V(\varpi_1)_{\qt} \epito V(\varpi_n)_{q^2}.
 \label{eq:Doreyn1}
\eneq
\end{prop}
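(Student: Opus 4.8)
The plan is to show, for each of the three families, that the space of $U'_q(\g)$-module homomorphisms from the displayed tensor product to the indicated module is non-zero; surjectivity is then automatic, since in every case the target --- $V(\varpi_{k+l})$, $V(\varpi_k)_{(-1)^{k+1}q_t}$, or $V(\varpi_n)_{q^2}$ --- is a good, hence simple, module.

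The first step is to read off from Proposition~\ref{prop:denominator} that the tensor products on the left of \eqref{eq:Doreykl}--\eqref{eq:Doreyn1} are reducible, with the relevant denominator having a \emph{simple} zero. For \eqref{eq:Doreykl}, at the spectral ratio $(-q)^{k}/(-q)^{-l}=(-1)^{k+l}q^{k+l}$ the factor of $d_{V(\varpi_k),V(\varpi_l)}(z)$ indexed by $s=\min(k,l)$ vanishes, and, using $k+l\le n-1$, one checks that no other factor does; for \eqref{eq:Doreynn} the factor of $d_{V(\varpi_n),V(\varpi_n)}(z)$ indexed by $s=n-k$ vanishes at $z=q^{N-k}/q^{k+1}=q_s^{4(n-k)-2}$; and for \eqref{eq:Doreyn1}, $d_{V(\varpi_1),V(\varpi_n)}(z)=z-q_t$ vanishes at the prescribed ratio. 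A companion computation shows that in each case the denominator does \emph{not} vanish at the reciprocal ratio. Hence, by \cite[Corollary~2.4]{AK}, each tensor product $M\tens N$ in question is reducible; and since the zero is simple while the reverse denominator is non-vanishing there, $M\tens N$ has length two, with simple socle $S$ and simple head $H$ (the quantum-affine counterpart of Lemma~\ref{lem:linked}, applicable because every $V(\varpi_i)$ is real --- indeed $d_{V(\varpi_i),V(\varpi_i)}(1)\neq0$ by Proposition~\ref{prop:denominator}). In particular there is a surjection $M\tens N\epito H$, unique up to a scalar.

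It remains to identify $H$. Since the normalized $R$-matrix sends the tensor of dominant extremal vectors to itself, the denominator computations above show that this vector is annihilated by $\rmat{M,N}$ but not by $\rmat{N,M}$; so, by Lemma~\ref{lem:commute_equiv_qa}, the socle $S$ carries the weight $\varpi_k+\varpi_l$ (resp. $2\varpi_n$, resp. $\varpi_n+\varpi_1$), while the dominant extremal weight of $H$ is strictly smaller. To pin this weight down --- and to fix the spectral parameter --- I would restrict everything to the finite-type subalgebra $U_q(B_n)\subset U'_q(\g)$ and combine the known decomposition of the fundamental modules over $U_q(B_n)$ with the classical Dorey rule: the summand $V(\overline{\varpi_{k+l}})$ (resp. $V(\overline{\varpi_k})$) occurs with multiplicity one in the relevant classical tensor product and, one argues, sits in $H$ rather than in $S$; tracking the remaining affine generator $e_0$ --- equivalently, the $q$-character of $H$, which is read off from those of $M$ and $N$ --- then forces $H\simeq V(\varpi_{k+l})$, resp. $V(\varpi_k)_{(-1)^{k+1}q_t}$, resp. $V(\varpi_n)_{q^2}$. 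I would dispose of the vector-node case \eqref{eq:Doreykl} first, then \eqref{eq:Doreyn1}, and finally \eqref{eq:Doreynn}, the last either by an induction composing maps of types \eqref{eq:Doreyn1} and \eqref{eq:Doreykl} or directly from the composition length of $V(\varpi_n)\tens V(\varpi_n)$ dictated by \eqref{eq:denonn}.

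The hard part is precisely this last identification. Bounding the dominant extremal weight of $H$ from above is routine, but showing that it is \emph{exactly} $\varpi_{k+l}$ --- that $H$ is neither a still smaller simple module nor a tensor product of fundamental modules of smaller index --- and simultaneously fixing its spectral parameter requires honest input from the crystal and global bases of the fundamental representations of type $B^{(1)}_n$ (alternatively, a folding argument transporting Dorey's rule from type $A^{(1)}_{2n-1}$, where it is classical). Once $H$ is identified with the asserted module, the proposition follows.
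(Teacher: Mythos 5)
The paper itself offers no proof of this proposition: it is quoted directly from \cite{Oh14} (alongside the denominator formulas of Proposition~\ref{prop:denominator}), so there is no in-paper argument for your sketch to be compared against. I can only evaluate the sketch on its own terms.

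Your preliminary bookkeeping is accurate: under $k+l\le n-1$ the factor $s=\min(k,l)$ of $d_{V(\varpi_k),V(\varpi_l)}$ gives a simple zero at $(-1)^{k+l}q^{k+l}$ and no other factor vanishes; the factor $s=n-k$ of $d_{V(\varpi_n),V(\varpi_n)}$ hits $q^{2n-2k-1}=q_s^{4(n-k)-2}$; $d_{V(\varpi_1),V(\varpi_n)}(z)=z-q_t$; and realness of each $V(\varpi_i)$ follows from these same formulas, so Lemma~\ref{lem:commute_equiv_qa} gives a simple head. But all of that is preparatory. The genuine gap is the identification of that head with the named module, and you explicitly defer it (``requires honest input from the crystal and global bases \ldots\ alternatively, a folding argument''). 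Observing that $\rmat{M,N}$ kills the tensor of dominant extremal vectors shows only that the head's dominant extremal weight is \emph{strictly} smaller than $\varpi_k+\varpi_l$; it does not pin the head down to $V(\varpi_{k+l})$, nor does it fix the spectral parameter $(-1)^{k+1}q_t$ in \eqref{eq:Doreynn}. That identification is essentially the entire content of Dorey's rule for $B^{(1)}_n$, and it is not carried out. Two lesser points: you invoke a quantum-affine counterpart of Lemma~\ref{lem:linked} to conclude composition length two, but no such statement appears in the paper (and it is unnecessary---a nonzero map to the named simple module is automatically surjective); and ``surjectivity is automatic since the target is simple'' presupposes exactly the nonzero map whose existence is the point at issue.
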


\subsection{The quiver $\Gamma$}  \label{subsec:quiver}

 Recall that  $\CC^0_{B_{n}^{(1)}}$ is the smallest full subcategory of $\CC_{B_{n}^{(1)}}$
stable under taking subquotients,  extensions, tensor
products and containing $\set{V(\varpi_i)_x}{(i,x) \in \mathscr
S_0({B_{n}^{(1)}})}$.
Here we  take the following connected component $\mathscr S_0({B_{n}^{(1)}})$ of $\mathscr S({B_{n}^{(1)}})$:
\eqn
\mathscr S_0({B_{n}^{(1)}}) \seteq \set{ (i,(-1)^{i-1}q_t q^m),  (n,q^m)}{1\le i \le n-1, \ m \in \Z}.
\eneqn

Let
$J=\Z$, and
define
\eqn
V_j:=\bc V(\vp_n) & \text{if } \ j\equiv -1,0 \ \bmod N, \\
                        V(\vp_1) & \text{otherwise}.
                    \ec
\eneqn
for $j\in J$.
The map $X\col J\to \cor^\times$ is given as follows:
\begin{eqnarray*}
&&X(0)=q^0=1, \\
&&X(j)=\qt q^{2(j-1)} \qquad(1\le j \le N-2), \\
&&X(N-1)=q^{3N-5}, \\
\end{eqnarray*}
and we extend it by
\eqn
&&
X(j+kN)\seteq X(j)q^{k(2N-2)}=X(j)\pstar^{2k} \qtext{for $0\le j\le N-1$ and $k\in\Z$.} 
\eneqn

Then by Proposition \ref{prop:denominator} we have 
\eq
d_{ij}=
\delta(j=i+1, j \not\equiv 0 \bmod N)+ \delta(j=i-1, j \equiv -1 \bmod N).
\eneq
For $i,j\in J$, we have 
\begin{align*}(\alj_i,\alj_j)=\begin{cases}-1&\text{if $i-j=\pm1$,}\\
2&\text{if $i=j$,}\\
0&\text{otherwise}.
\end{cases}
\end{align*}
Hence  the quiver $\Gamma=\Gamma^J$ corresponding to the datum 
$(J,X,\{V_j\}_{j\in J})$  and the corresponding \KLR\ are of type $A_\infty$.

 We give the parameters for $R^J$ by
$$Q_{i,j}(u,v)
=\begin{cases}
 \pm(u-v)&\text{if $j=i\pm1$,}   \\
0&\text{if $i=j$,}\\
1&\text{otherwise.}
\end{cases}$$

\begin{remark}
The above choice of  parameters $\{Q_{i,j}(u,v)\}_{ij \in J}$ for quiver Hecke algebra $R$ of type $A_\infty$ is identical  with  those in \cite{KKK13A}
and \cite{KKKO15}.
\end{remark}

Let us choose a \dcf
$\set{P_{i,j}(u,v) \in \cor[[u,v]]}{i,j \in J}$ satisfying
condition \eqref{eq:Pij} including
$$Q_{i,j}(u,v) = P_{i,j}(u,v)P_{j,i}(v,u)\qtext{for $i\not=j$.}$$

For  example,  we may take 
\eq
\Pt_{i,j}(u,v)\seteq (-1)^{\delta(j=i+1\equiv 0 \bmod N)}(u-v)^{d_{ij}}.
\label{eq:PT}
\eneq
as $P_{i,j}(u,v)$.

\medskip

We take
$$P_J=\soplus_{a\in \Z}\Z\oep_a$$
as the weight lattice with $(\oep_a,\oep_b)=\delta_{a,b}$. The root
lattice $\rtl_J= \soplus_{i\in J}\Z\alj_i$ is embedded into $P_J$ by
$\alj_i=\oep_i-\oep_{i+1}$.
We write  $\rtl_J^+$ for $\soplus_{i\in
J}\Z_{\ge0}\alj_i$.

Then we have a family of functors $\F_\beta$ ($\beta \in \rtl_J^+)$ defined in \eqref{eq:functor}.
 Note that $\F_\beta$ depends on the choice of  \dcf $\{P_{i,j}(u,v)\}_{i,j\in J}$.

By  Theorem \ref{thm:exact},
 the functor 
  $$\F =\soplus_{\beta\in \rootl^+_J}\F_\beta\col \soplus_{\beta\in  \rtl^+_J} \Modg(R^J(\beta))\to
\Mod( U_q'(B_n^{(1)})) $$
 is exact. 

  Note that 
$$\F(L(a+N))\simeq \rd \rd \F(L(a))\qtextq{and}  \F(L(a-N))\simeq \F(L(a))^{**}$$
for all $a\in J$ by the construction.

\bigskip

\subsection{Simple modules over quiver Hecke algebra of type A${}_\infty$}
Since we deal with a functor $\F$ whose domain is the category of modules over the quiver Hecke algebra of type $A_{\infty}$, 
we recall some basic materials for the quiver Hecke algebra of type $A_{\infty}$. 

For simplicity we will write $R(\beta)$ for $R^J(\beta)$ in the sequel. 
 A pair of integers $(a,b)$ such that $ a \leq b$
is called a {\em segment}.  The \emph{length} of $(a,b)$  is $b-a+1$.
A \emph{multisegment} is a  finite sequence of  segments.

 For a segment $(a,b)$ of length $\ell$, we define a graded $1$-dimensional
 $R( \oep_a-\oep_{b+1} )$-module
 $L(a,b)=\cor u{(a,b)}$ in $R( \oep_a-\oep_{b+1} )\gmod$ which is generated by a
vector $u{(a,b)}$ of degree $0$ with the action of $R( \oep_a-\oep_{b+1} )$ given by
 \begin{align}
x_m u{(a,b)} =0 , && \tau_k u{(a,b)} =0 , &&
e(\nu) u{(a,b)}= \begin{cases}
u{(a,b)} & \text{if $\nu=(a,a+1, \ldots, b)$,} \\
0 & \text{otherwise.}
\end{cases}
\end{align}
We understand that $L(a,a-1)$ is the 1-dimensional module over $R(0) = \cor$  and
the length of $(a,a-1)$ is $0$.
Note that $L(a,a)$ is nothing but $L(a)$.

We give a total order on the set of segments as follows:
\eqn \label{eq:right order}
(a_1,b_1) > (a_2,b_2) \quad  \ \text{if} \ a_1 > a_2 \ \text{or} \ a_1=a_2 \ \text{and} \ b_1  <   b_2.
\eneqn

Then we have
\begin{prop} [{\cite[Theorem 4.8, Theorem 5.1]{KP11},
\cite[Proposition 4.2.7]{KKK13A}}] \label{prop:multisegments}\hfill
\bnum
\item
Let $M$ be a simple module in   $R(\beta) \gmod$ with 
$\beta\in\rootl_J^+$ .
Then there exists a unique pair of a multisegment $\big ((a_1,b_1),
\ldots , (a_t,b_t) \big)$ and an integer $c$
such that
\bna
\item $(a_k, b_k) \ge (a_{k+1}, b_{k+1})$ for $1 \leq k \leq t-1$,
\item $\sum_{k=1}^{t}(\oep_{a_k}-\oep_{b_k+1})=\beta$,
\item
$ M \simeq q^c\hd \big(L(a_1,b_1)\conv \cdots \conv L(a_t,b_t) \big)$,
 where $\hd$ denotes the head.
\ee

\item Conversely, if  a multisegment   $\big ((a_1,b_1), \ldots , (a_t,b_t) \big)$  satisfies
$\rm (a)$ and $\rm (b)$, then
$\hd \big(L(a_1,b_1)\conv \cdots \conv L(a_t,b_t) \big)$
is a simple $R(\beta)$-module.
\ee
\end{prop}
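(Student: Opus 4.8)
The plan is to prove (i) and (ii) together by analysing, for each ordered multisegment $\mathbf{m}=\big((a_1,b_1)\ge\cdots\ge(a_t,b_t)\big)$ with $\sum_{k}(\oep_{a_k}-\oep_{b_k+1})=\beta$, the convolution $L(\mathbf{m})\seteq L(a_1,b_1)\conv\cdots\conv L(a_t,b_t)$, showing that it has a simple head $S(\mathbf{m})\seteq\hd L(\mathbf{m})$ and that $\mathbf{m}\mapsto S(\mathbf{m})$ is a bijection from ordered multisegments of content $\beta$ onto the isomorphism classes, up to grading shift, of simple modules in $R(\beta)\gmod$. The first step is the pairwise analysis. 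Each $L(a,b)$ is one-dimensional, hence homogeneous and simple, and it is \emph{real}: a direct computation with the defining relations of $R$ shows that $L(a,b)\conv L(a,b)$ has no proper nonzero submodule. For two segments one computes $\Lambda\big(L(a,b),L(c,d)\big)$ from the definition of the renormalized $R$-matrices in type $A_\infty$ and finds $\de\big(L(a,b),L(c,d)\big)\in\{0,1\}$, the value being $1$ precisely when $[a,b]$ and $[c,d]$ are linked in Zelevinsky's sense (say $a<c\le b+1$ and $b<d$). In that linked case Lemma~\ref{lem:linked} gives a length-two module fitting in $0\to L(a,b)\sconv L(c,d)\to L(a,b)\conv L(c,d)\to L(a,b)\hconv L(c,d)\to0$, and an explicit computation identifies $L(a,b)\sconv L(c,d)$ with $\hd\big(L(a,d)\conv L(c,b)\big)$, reading $L(c,b)$ as the trivial module $\cor$ when $c=b+1$. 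This recombination rule — which replaces two linked adjacent segments by the strictly "more spread out" pair $[a,d],[c,b]$ — is the combinatorial heart of the argument.

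Next I would show that $L(\mathbf{m})$ has a simple head. Since every factor is real, one iterates Lemma~\ref{lem:commute_equiv} along the total order on segments defined above; that order is arranged so that the $R$-matrices between consecutive factors are all oriented in the same direction, which makes the standard criterion for a convolution of real simple modules applicable: $L(\mathbf{m})$ has simple head $S(\mathbf{m})$, occurring in $L(\mathbf{m})$ with multiplicity one. To see that $S(\mathbf{m})$ recovers $\mathbf{m}$, I would use the reading word $\readw(\mathbf{m})=(a_1,a_1{+}1,\ldots,b_1,a_2,\ldots,b_t)\in I^{\beta}$: one checks that $e\big(\readw(\mathbf{m})\big)L(\mathbf{m})\ne0$, that it is one-dimensional, and that $\readw(\mathbf{m})$ is the largest $\nu\in I^{\beta}$ with $e(\nu)L(\mathbf{m})\ne0$ for the order on $I^\beta$ induced by the chosen order on segments; since distinct ordered multisegments of content $\beta$ produce distinct such maximal words — this is exactly the property the order on segments is designed to have — the modules $S(\mathbf{m})$ are pairwise non-isomorphic.

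I would then prove the unitriangular relation $[L(\mathbf{m})]=[S(\mathbf{m})]+\sum_{\mathbf{m}'\prec\mathbf{m}}c_{\mathbf{m},\mathbf{m}'}[S(\mathbf{m}')]$ in $K\big(R(\beta)\gmod\big)$, where $\prec$ is a dominance-type partial order on multisegments of content $\beta$, refined so that the recombination rule above is strictly decreasing. The argument is an induction on $\mathbf{m}$ along $\prec$: the Mackey (shuffle) filtration of the restriction of $L(\mathbf{m})$ expresses every composition factor of $L(\mathbf{m})$ other than $S(\mathbf{m})$ in terms of $L(\mathbf{m}'')$ for $\prec$-smaller $\mathbf{m}''$ obtained from $\mathbf{m}$ by recombining an adjacent linked pair, and one invokes the inductive hypothesis for these. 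Unitriangularity shows that $\{[L(\mathbf{m})]\}$ and $\{[S(\mathbf{m})]\}$ span the same submodule of $K\big(R(\beta)\gmod\big)$; together with the distinctness of the previous paragraph this exhibits $\{S(\mathbf{m}):\text{content }\mathbf{m}=\beta\}$ as a family of pairwise non-isomorphic simple modules. Finally, to see that every simple occurs: by the Khovanov--Lauda / Lauda--Vazirani categorification, the number of simple modules in $R(\beta)\gmod$ up to grading shift equals $\dim U_q^-(\beta)$ for $U_q^-$ of type $A_\infty$, which is the Kostant partition number of $\beta$; since the positive roots in type $A_\infty$ are exactly the elements $\oep_a-\oep_{b+1}$ attached to segments $(a,b)$, this equals the number of multisets of segments summing to $\beta$, that is, the number of ordered multisegments of content $\beta$. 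Matching cardinalities upgrades the injectivity of $\mathbf{m}\mapsto S(\mathbf{m})$ to a bijection, which is (i)+(ii).

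The main obstacle is the combinatorial bookkeeping in the third step: controlling \emph{all} composition factors of $L(\mathbf{m})$ through the Mackey filtration, and verifying that the recombination rule is strictly decreasing for a single well-chosen order $\prec$ — equivalently, pinning down the precise Zelevinsky-type description of $L(a,b)\conv L(c,d)$ for linked segments and its interaction with longer convolutions. Everything else (realness of the $L(a,b)$, the simple-head criterion, the dimension count) is routine or follows from the general $R$-matrix formalism recalled in this section. If one prefers, this combinatorial core can be bypassed by invoking the crystal-theoretic realization of $B(\infty)$ in type $A_\infty$ by multisegments and matching the crystal operators with the operations $M\mapsto\hd\big(L(a)\conv M\big)$; this is essentially the route of \cite{KP11}, while \cite[Proposition~4.2.7]{KKK13A} records the statement in the present form.
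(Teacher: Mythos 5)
Your sketch is correct in outline but follows a different route from the one the paper relies on. The paper does not reprove this proposition: it cites \cite{KP11} and \cite[Proposition~4.2.7]{KKK13A}. In \cite{KP11} the classification is obtained crystal-theoretically, by endowing the set of self-dual simples of the type-$A$ quiver Hecke algebra with a crystal structure (through socles and heads of convolutions with the $L(i)$'s), identifying it with the multisegment model of $B(\infty)$, and reading off that the simple attached to a given ordered multisegment is exactly the head of the corresponding convolution; this is precisely the route you only mention at the very end as an alternative. Your main argument is instead a direct Zelevinsky--Bernstein-style one: pairwise $\de$-analysis and the recombination rule for linked segments supplied by Proposition~\ref{prop:exact sequences}, a simple-head statement for iterated convolutions of real simples, unitriangularity through a Mackey/shuffle filtration, and a final bijection by matching cardinalities with $\dim U_q^-(\beta)$ via the KLR categorification theorem. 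Both arguments are standard and correct; yours has the virtue of staying entirely inside the $R$-matrix and shuffle formalism already recalled in this section, while the crystal route bypasses exactly the combinatorial step you flag as the main obstacle, namely pinning down a partial order on multisegments under which the recombination rule is strictly decreasing and tracking all composition factors of $L(\mathbf{m})$.

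One step should be stated more carefully. You cannot literally ``iterate Lemma~\ref{lem:commute_equiv}'' on $L(a_1,b_1)\conv L(a_2,b_2)\conv\cdots\conv L(a_t,b_t)$: already $L(a_1,b_1)\conv L(a_2,b_2)$ is in general neither simple nor real, so the lemma does not apply to the next convolution. The clean inductive fix is: assume $S_0\seteq\hd\bigl(L(a_2,b_2)\conv\cdots\conv L(a_t,b_t)\bigr)$ is simple; right exactness of $\conv$ gives a surjection $L(\mathbf{m})\twoheadrightarrow L(a_1,b_1)\conv S_0$, whose target has a simple head by Lemma~\ref{lem:commute_equiv} because $L(a_1,b_1)$ is real; hence the nonzero quotient $\hd L(\mathbf{m})$ of $L(a_1,b_1)\conv S_0$ is that same simple module. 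Once this is in place, the remaining ingredients of your outline (distinctness of heads via maximal reading words, and the dimension count) go through as you describe.
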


If a multisegment $\big ((a_1,b_1), \ldots , (a_t,b_t) \big)$
satisfies the condition (a) above, then
we say that it is an {\em ordered multisegment}.
We call the  ordered  multisegment $\big ((a_k,b_k)\big)_{1\le k \le t}$
in Proposition \ref{prop:multisegments}~(i)
the {\em multisegment associated with $M$}.

\begin{prop} [{\cite[Proposition 4.2.3]{KKK13A} }]\label{prop:exact sequences}
For $a \leq b$ and $a' \leq b'$,
 set $\beta=\oep_{a}-\oep_{b+1}$
and $\beta'=\oep_{a'}-\oep_{b'+1}$.
  \bnum
\item $L(a,b)$ is a real simple module.
\item We have
\eqn
\Lambda(L(a,b),L(a',b'))
=
\begin{cases}
-\delta_{a,a'}-\delta_{b,b'}+2 & \text{if } \ a \le a'\le b\le b', \\
-(\beta,\beta') &\text{otherwise}.
\end{cases}
\eneqn

\item  We have
\eqn
&&\de\bl L(a,b),L(a',b')\br\\
&&\hs{5ex}=\bc0&\text{if $[a,b]\subset [a',b']$ or
$[a,b]\supset [a',b']$ or $[a-1,b+1]\cap[a',b']=\emptyset$,}\\
1&\text{otherwise.}
\ec\eneqn

\item If $a' < a\le b' < b$, then
we have the following exact sequence
\begin{align*}
 & 0 \To qL(a',b) \conv L(a, b') \To
  L(a, b) \conv L(a',b')\\
&\hs{15ex}\To[{\hs{1.5ex} \rmat{L(a,b),L(a'b')} \hs{1.5ex}}] L(a',b') \conv L(a, b)\To
 q^{-1}L(a', b) \conv L(a,b')\To 0.\end{align*}
\item If $a=b'+1$, then we have an exact sequence
\begin{align*}
  0 \to q L(a',b) \To
 L(a, b) \conv L(a',b')
\To[{\rmat{L(a,b),L(a'b')}}] q^{-1}L(a',b') \conv L(a, b)\to
q^{-1} L(a', b)\rightarrow 0.
\end{align*}
\ee
\end{prop}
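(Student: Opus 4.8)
The plan is to treat the whole statement as a result about segment modules over the type-$A_\infty$ quiver Hecke algebra $R=R^J$, using throughout that each $L(a,b)$ is one-dimensional and concentrated on the single word $(a,a+1,\dots,b)$. I would prove the five parts in the order (ii) $\Rightarrow$ (i) $\Rightarrow$ (iii) $\Rightarrow$ (iv) and (v), since the invariant $\Lambda$ is the datum that drives everything, including the grading shifts appearing in the exact sequences of (iv) and (v).

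For (ii), recall $\Lambda(M,N)=-(\beta,\gamma)+2\inp{\beta,\gamma}-2s$, where $s$ is the order of vanishing of $R_{M_z,N_{z'}}$, and that for $M=L(a,b)$ and $N=L(a',b')$ this map is literally $u\tens v\mapsto\varphi_{w[n,m]}(v\tens u)$. Since $L(a,b)$ is one-dimensional, $x_k$ acts on $L(a,b)_z$ by the scalar $z$, and on $L(a',b')_{z'}$ by $z'$, so computing $\varphi_{w[n,m]}$ is pure bookkeeping: a crossing of two strands with labels $i\neq j$ contributes a unit when $|i-j|\ge2$ (as $Q_{i,j}=1$) and a factor proportional to $z-z'$ when $|i-j|=1$ (as $Q_{i,j}=\pm(u-v)$), while the unavoidable crossing of the two strands carrying a common label $c\in[a,b]\cap[a',b']$ has $Q_{c,c}=0$ and, through the relations $(\tau_k x_m-x_{s_k(m)}\tau_k)e(\nu)=\pm e(\nu)$, again produces a factor proportional to $z-z'$. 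Running this count one finds $s=\inp{\beta,\gamma}=|[a,b]\cap[a',b']|$ in every configuration except the right-staggered one $a\le a'\le b\le b'$, in which there is exactly one fewer such factor so that $s=|[a,b]\cap[a',b']|-1$; substituting these values, together with $(\beta,\gamma)=\delta_{a,a'}-\delta_{a,b'+1}-\delta_{b+1,a'}+\delta_{b+1,b'+1}$, reproduces the two displayed cases. Part (i) follows by specialization: $\Lambda(L(a,b),L(a,b))=0$, hence $\de(L(a,b),L(a,b))=0$, and a direct inspection of the cyclic vector $u(a,b)\tens u(a,b)$ in $L(a,b)\conv L(a,b)$ (of the same flavour as the nilHecke check that $L(i)\conv L(i)$ is simple) shows this module has no proper submodule, so $L(a,b)$ is real. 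Part (iii) is then formal: $\de(L(a,b),L(a',b'))=\tfrac12\bl\Lambda(L(a,b),L(a',b'))+\Lambda(L(a',b'),L(a,b))\br$, and one checks each of the three cases of the claimed $\de$-formula against (ii) — in the nested cases and in the case $[a-1,b+1]\cap[a',b']=\emptyset$ the two $\Lambda$'s cancel, and otherwise their sum is $2$.

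For (iv) and (v), note that (iii) gives $\de(L(a,b),L(a',b'))=1$ in both situations, so by Lemma~\ref{lem:linked} together with the realness of (i), the module $L(a,b)\conv L(a',b')$ has length two, with socle $L(a,b)\sconv L(a',b')$ and head $L(a,b)\hconv L(a',b')$; and by Lemma~\ref{lem:commute_equiv}, $\Img(\rmat{L(a,b),L(a',b')})$ is that head, which is at the same time the socle of $L(a',b')\conv L(a,b)$. It remains to identify the unique proper submodule of $L(a,b)\conv L(a',b')$: as $qL(a',b)\conv L(a,b')$ in case (iv) — which is simple by (iii), since $[a,b']\subset[a',b]$ — and as $qL(a',b)$ in case (v), where $[a',b']$ and $[a,b]=[b'+1,b]$ glue to $[a',b]$. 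For this I would write an explicit nonzero $R(\beta+\gamma)$-homomorphism from that candidate simple module into $L(a,b)\conv L(a',b')$, built from a suitable intertwiner $\varphi_w$ applied to the generating vectors; being nonzero with simple source it is injective, and as its target is not simple its image is the unique proper submodule, hence the socle. Its grading shift is read off from $\deg\varphi_w$ and is compatible with the values of $\Lambda$ from (ii); the cokernel of $\rmat{L(a,b),L(a',b')}$ — the head of $L(a',b')\conv L(a,b)$ — is then the same simple module as the socle just described, up to a grading shift, which is pinned down by the identity $\Rm_{N,M_z}\circ\Rm_{M_z,N}=z^{\de(M,N)}\id$ of Lemma~\ref{lem:even} relating $\rmat{L(a,b),L(a',b')}$ to $\rmat{L(a',b'),L(a,b)}$.

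The step I expect to be the real obstacle is the order-of-vanishing count in (ii): following the intertwiner $\varphi_{w[n,m]}$ with the $x$-variables specialized to $z$ and $z'$ carefully enough to see that the right-staggered configuration $a\le a'\le b\le b'$ — and no other — loses exactly one factor of $z-z'$. Once that single computation is settled, parts (i), (iii), (iv) and (v) follow from it together with the general machinery of Lemmas~\ref{lem:even}, \ref{lem:commute_equiv} and \ref{lem:linked} and a handful of explicit intertwiner maps.
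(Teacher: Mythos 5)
The paper does not prove this statement; it is quoted directly from \cite[Proposition 4.2.3]{KKK13A}, so there is no argument in the present paper to compare yours against. Your overall plan — compute the vanishing order $s$ of the renormalized $R$-matrix by unwinding the intertwiner $\varphi_{w[n,m]}$ on the one-dimensional segment modules, read off $\Lambda$ and hence $\de$, and deduce the exact sequences from Lemmas~\ref{lem:commute_equiv} and~\ref{lem:linked} once the socle is located — is reasonable and in the spirit of the cited source.

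There are, however, two concrete gaps. First, the crossing-counting rule you propose for (ii) is incorrect: a single crossing $\tau_a e(\nu)$ of strands with adjacent labels, $|i-j|=1$, does \emph{not} by itself produce a factor of $(z-z')$; the polynomial $Q_{i,j}(u,v)=\pm(u-v)$ enters only through $\tau_a^2e(\nu)=Q_{\nu_a,\nu_{a+1}}(x_a,x_{a+1})e(\nu)$, i.e.\ when a crossing is undone. A simple test case is $L(0,1)$ versus $L(2,3)$: exactly one adjacent crossing occurs (labels $1$ and $2$), so your rule would give $s\ge1$, whereas the formula in (ii) forces $s=0$ — here $\inp{\beta,\gamma}=0$, $(\beta,\gamma)=-1$, and the second case gives $\Lambda=1=-(\beta,\gamma)+2\inp{\beta,\gamma}-2\cdot 0$. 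The actual count requires expanding $\varphi_{w[n,m]}$ against the shuffle basis of the target, which is precisely the step you flag as the real obstacle; the proposal does not carry it out, and the rule as stated does not reproduce the formula. Second, saying that part (i) ``follows by specialization'' is misleading and verges on circularity: $\de(L(a,b),L(a,b))=0$ cannot by itself give simplicity of $L(a,b)\conv L(a,b)$, because Lemmas~\ref{lem:linked} and~\ref{lem:commute_equiv} both assume one of the factors is already real, which is exactly what (i) asserts. The ``direct inspection'' you invoke is therefore the entire proof of (i), and for $\ell>1$ it is not the two-dimensional nilHecke check: $L(a,b)\conv L(a,b)$ has dimension $\binom{2\ell}{\ell}$ and its simplicity requires its own argument — and logically it must come before you apply those lemmas in (iii)--(v). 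The reductions of (iii)--(v) to (i) and (ii) via Lemmas~\ref{lem:even}, \ref{lem:commute_equiv}, \ref{lem:linked} are sound, but as written the proposal does not yet establish (i) or (ii).
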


\subsection{Properties of the functor $\F$}

In this subsection we will investigate properties of the exact functor 
$$\F \col \soplus_{\beta \in \rootl^+_J} R( \beta) \gmod \to \CC_J \subset 
\CC_{B^{(1)}_n}.$$ 

\begin{lemma} \label{lem:FK0k}
We have
\eq \label{eq:0k}
\hs{2em}\F(L(0,k)) \simeq 
\begin{cases}
V(\varpi_n)_{q^{2k}} & \text{for} \ 0 \le k \le N-2, \\
 \cor &\text{for } \ k=N-1,
\end{cases}
\eneq
and
\eq
\F(L(k,N-1)) \simeq V(\varpi_n)_{q^{N-3+2k}} \qtext{for} \ 1 \le k \le N-1.
\eneq
\end{lemma}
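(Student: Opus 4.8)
The plan is to compute $\F(L(0,k))$ inductively on $k$ by peeling off one segment-endpoint at a time, using Proposition~\ref{prop:image of tau}~(i)--(ii) to translate the quiver-Hecke $R$-matrices into the normalized $R$-matrices of $U_q'(B_n^{(1)})$, and using the Dorey-type surjections of Proposition~\ref{prop:Dorey} to identify the heads on the quantum affine side. First I would record the base cases: by Proposition~\ref{prop:image of tau}~(i), $\F(L(0))\simeq (V_0)_{X(0)}=V(\varpi_n)_{1}=V(\varpi_n)_{q^0}$ and $\F(L(1))\simeq (V_1)_{X(1)}=V(\varpi_1)_{\qt}$, and more generally $\F(L(j))\simeq (V_j)_{X(j)}$, which for $2\le j\le N-2$ is $V(\varpi_1)_{\qt q^{2(j-1)}}$ and for $j=N-1$ is $V(\varpi_n)_{q^{3N-5}}$. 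Since $L(0,k)$ is, by Proposition~\ref{prop:multisegments}, the head of $L(0)\conv L(1)\conv\cdots\conv L(k)$ — equivalently the head of $L(0,k-1)\conv L(k)$ — exactness of $\F$ (guaranteed here by Theorem~\ref{thm:exact}, the quiver being of type $A_\infty$) together with Theorem~\ref{thm:conv to tensor} gives a surjection
\[
\F(L(0,k-1))\tens \F(L(k)) \epito \F(L(0,k)),
\]
so that $\F(L(0,k))$ is a (possibly zero) simple quotient of $\F(L(0,k-1))\tens\F(L(k))$.

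Next I would run the induction. Assume $\F(L(0,k-1))\simeq V(\varpi_n)_{q^{2(k-1)}}$ for some $2\le k\le N-2$. Then $\F(L(0,k-1))\tens\F(L(k))\simeq V(\varpi_n)_{q^{2k-2}}\tens V(\varpi_1)_{\qt q^{2(k-1)}}$. To apply the Dorey surjection \eqref{eq:Doreyn1}, namely $V(\varpi_n)\tens V(\varpi_1)_{\qt}\epito V(\varpi_n)_{q^2}$, I would shift spectral parameters by $q^{2(k-1)}$: this yields $V(\varpi_n)_{q^{2k-2}}\tens V(\varpi_1)_{\qt q^{2k-2}}\epito V(\varpi_n)_{q^{2k}}$, which is exactly the required identification once I check that the head is nonzero and simple. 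That $\F(L(0,k))$ is nonzero and simple follows from the first part of the Theorem after Corollary~\ref{cor:subquotients} (images of simple modules are simple or zero) combined with the fact that $\de(L(0,k-1),L(k))\le 1$ by Proposition~\ref{prop:exact sequences}~(iii) — here $[0,k-1]$ and $[k,k]$ are adjacent — so Lemma~\ref{lem:head} applies and gives $\F(L(0,k))\simeq\F(L(0,k-1))\hconv\F(L(k))$, which is simple provided the tensor product is non-simple; if instead the tensor product is already simple the statement is immediate. For the boundary value $k=N-1$: here $L(0,N-1)$ is the head of $L(0,N-2)\conv L(N-1)$, and $\F(L(0,N-2))\tens\F(L(N-1))\simeq V(\varpi_n)_{q^{2N-4}}\tens V(\varpi_n)_{q^{3N-5}}$. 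I would show the head is the trivial module by checking $3N-5 - (2N-4) = N-1$, so the ratio of spectral parameters is $q^{N-1}$, and comparing with \eqref{eq:denonn}: $d_{V(\varpi_n),V(\varpi_n)}(z)=\prod_{s=1}^n(z-q_s^{4s-2})$ with $q_s=q^{1/2}$ gives a zero at $z=q_s^{4n-2}=q^{2n-1}=q^{N-1}$; this is the ``dual pair'' value for which the tensor product of $V(\varpi_n)$ with a parameter-shift of itself has the trivial representation as head (indeed $\rd V(\varpi_n)\simeq V(\varpi_{n^*})_{\pstar}=V(\varpi_n)_{q^{2n-1}}$ since $n^*=n$ and $\pstar=q^{2n-1}$), so $\F(L(0,N-1))\simeq\cor$.

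For the second formula, $\F(L(k,N-1))$ for $1\le k\le N-1$, I would argue symmetrically, writing $L(k,N-1)$ as the head of $L(k)\conv L(k+1)\conv\cdots\conv L(N-1)$ and inducting downward on $k$ from the base case $\F(L(N-1))\simeq V(\varpi_n)_{q^{3N-5}}$, using the Dorey surjection \eqref{eq:Doreyn1} read in the other order (or its left/right dual) together with the spectral parameters $X(j)=\qt q^{2(j-1)}$ for $1\le j\le N-2$; the arithmetic $3N-5 + 2(k - (N-1)) \cdot(\text{something})$ should be arranged to land on $q^{N-3+2k}$, and one checks the edge case $k=1$ gives $V(\varpi_n)_{q^{N-1}}$, consistent with $\F(L(1,N-1))\simeq\rd\F(L(0,N-2))$ up to the relation $\F(L(a+N))\simeq\rd\rd\F(L(a))$ noted before this subsection. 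The main obstacle I anticipate is purely bookkeeping: making sure every spectral-parameter shift is consistent with the definition of $X$ (including the $\qt$ factors and the extension rule $X(j+kN)=X(j)\pstar^{2k}$), and verifying at each inductive step that the hypotheses of Lemma~\ref{lem:head} hold — in particular that the relevant $\de$ equals $1$ (non-simple tensor product) or $0$ — rather than any deep structural difficulty. The key conceptual inputs, Dorey's rule and the denominator formulas, are already available from Proposition~\ref{prop:Dorey} and Proposition~\ref{prop:denominator}.
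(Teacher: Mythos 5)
Your plan for the first formula $\F(L(0,k))$ is essentially identical to the paper's: peel off $L(k)$, use the Dorey surjection \eqref{eq:Doreyn1} (together with the denominator formula to see non-simplicity of the tensor product) and Lemma~\ref{lem:head} to identify $\F(L(0,k-1))\hconv\F(L(k))$; the $k=N-1$ boundary step via $\rd V(\varpi_n)\simeq V(\varpi_n)_{p^*}$, $p^*=q^{2n-1}=q^{N-1}$, is exactly what the paper does.

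For the second formula $\F(L(k,N-1))$ your route is genuinely different from the paper's, and it is the weaker part of your proposal. You propose a downward induction on $k$, peeling $L(k)$ off the left and applying ``the Dorey surjection \eqref{eq:Doreyn1} read in the other order.'' The catch is that \eqref{eq:Doreyn1} gives $\hd\bl V(\varpi_n)\tens V(\varpi_1)_{\qt}\br\simeq V(\varpi_n)_{q^2}$, whereas your inductive step needs $\hd\bl V(\varpi_1)_a\tens V(\varpi_n)_{a\qt}\br$, and the naive socle/head swap gives information only about $\hd\bl V(\varpi_1)_{\qt}\tens V(\varpi_n)\br$, i.e.\ about the \emph{other} pole $\qt^{-1}$. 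To get the statement you actually need, one has to pass through the adjunction $\Hom(V(\varpi_n)\tens V(\varpi_1)_{\qt},V(\varpi_n)_{q^2})\simeq\Hom(V(\varpi_n),V(\varpi_n)_{q^2}\tens V(\varpi_1)_{\qt/p^*})$ and recognize the image as the simple socle; this yields $\hd\bl V(\varpi_1)_a\tens V(\varpi_n)_{a\qt}\br\simeq V(\varpi_n)_{a p^*/\qt}$, which with $a=\qt q^{2k-2}$ indeed lands on $q^{N-3+2k}$. So your approach can be made to work, but each inductive step then repeats a duality argument; the vague line ``the arithmetic $\cdots$ should be arranged to land on $q^{N-3+2k}$'' is hiding exactly this nontrivial point. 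The paper instead uses duality \emph{once, globally}: from the surjection $\F(L(0,k-1))\tens\F(L(k,N-1))\epito\F(L(0,N-1))\simeq\cor$ (which you have already established as part of \eqref{eq:0k}), the adjunction $\Hom(M\tens X,\cor)\simeq\Hom(X,\rd M)$ produces a non-zero map $\F(L(k,N-1))\to\rd\bl V(\varpi_n)_{q^{2k-2}}\br$; since $\F(L(k,N-1))$ is simple or zero and the target is simple, this is an isomorphism, giving $\F(L(k,N-1))\simeq V(\varpi_n)_{q^{2k-2}p^*}=V(\varpi_n)_{q^{N-3+2k}}$ in one stroke. This is both shorter and entirely sidesteps the need for a dualized Dorey rule.

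One further small point: you cite Proposition~\ref{prop:image of tau}(ii) as one of the tools, but the argument only uses part (i) together with the Dorey surjections and Lemma~\ref{lem:head}; part (ii) (on the image of the intertwiner) is not needed here.
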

\begin{proof}
When $k=0$,  isomorphism \eqref{eq:0k} is obvious  from the definition. Let $k \ge 1$. 
By induction on $k$, we may assume that  $\F(L(0,k-1))\simeq V(\varpi_n)_{q^{2(k-1)}}$.

On the other hand, Proposition \ref{prop:denominator} implies that
\eqn
&&\F\bl L(0,k-1)\br\hconv \F\bl L(k)\br\\
&&\hs{10ex}\simeq 
\bc V(\varpi_n)_{q^{2(k-1)}} \hconv V(\varpi_1)_{\qt q^{2k-2}}
\simeq V(\varpi_n)_{q^{2k}} &\text{for $1 \le k \le N-2$,}\\
V(\varpi_n)_{q^{2(N-2)}} \hconv V(\varpi_n)_{q^{3N-5}} \simeq \cor
&\text{if $k=N-1$.}
\ec\eneqn
Hence the tensor product $\F(L(0,k-1)) \tens \F(L(k))$ is not simple for $1 \le k \le N-1$, and
Lemma \ref{lem:head} implies that 
$\F(L(0,k))\simeq\F(L(0,k-1)\hconv L(k)) \simeq \F(L(0,k-1)) \hconv \F(L(k))$.
Thus we obtain  \eqref{eq:0k}.

\medskip
Let $1 \le k \le N-1$.  Then, an epimorphism
\eqn
&&V(\varpi_n)_{q^{2k-2}} \tens \F(L(k,N-1)) \simeq \F(L(0,k-1)\conv L(k,N-1)) \\
&&{\hskip 10em}\epito \F(L(0,k-1)\hconv L(k,N-1)) \simeq \F(0,N-1) \simeq \cor, \eneqn
induces a non-zero homomorphism
$\F(L(k,N-1))\to \rd\bl V(\varpi_n)_{q^{2k-2}}\br$. 
Since $\F(L(k,N-1))$ is zero or simple, we obtain
\[ \F(L(k,N-1)) \simeq \rd\bl V(\varpi_n)_{q^{2k-2}} \br\simeq V(\varpi_n)_{q^{2k-2}\pstar} =V(\varpi_n)_{q^{N-3+2k}}. \qedhere \]
\end{proof}

Since $(V_j)_{X(j)}$ ($j \in J$)
belongs to 
 $\CC_{B_{n}^{(1)}}^0$,
the category $\CC_J$ is a full subcategory of $\CC_{B_{n}^{(1)}}^{0}$. 
The next proposition asserts that  they are actually the same.

\begin{prop}
 For any $(i,x)\in\mathscr S_0({B_{n}^{(1)}})$,
the fundamental representation $V(\varpi_i)_x$
belongs to $\CC_J$. 
In particular, we have $\CC_J=\CC_{B_{n}^{(1)}}^{0}$.
\end{prop}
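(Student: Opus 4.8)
The plan is to realize each fundamental module $V(\varpi_i)_x$ with $(i,x)\in\mathscr S_0({B_{n}^{(1)}})$ as a subquotient of a tensor product of the modules $\F(L(j))\simeq(V_j)_{X(j)}$ ($j\in J$). Since $\CC_J$ is the smallest full subcategory of $\CC_{B_{n}^{(1)}}$ stable under subquotients, extensions and tensor products and containing all the $(V_j)_{X(j)}$, this forces $\CC_{B_{n}^{(1)}}^{0}\subseteq\CC_J$; combined with the already noted inclusion $\CC_J\subseteq\CC_{B_{n}^{(1)}}^{0}$ this yields the proposition. By the explicit description $\mathscr S_0({B_{n}^{(1)}})=\set{(i,(-1)^{i-1}q_t q^m),\ (n,q^m)}{1\le i\le n-1,\ m\in\Z}$, it is enough to place in $\CC_J$: (a) the modules $V(\varpi_n)_{q^m}$ for every $m\in\Z$; and (b) the modules $V(\varpi_i)_{(-1)^{i-1}q_t q^m}$ for $1\le i\le n-1$ and every $m\in\Z$.

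For (a), Lemma~\ref{lem:FK0k} already gives $V(\varpi_n)_{q^{2k}}\simeq\F(L(0,k))\in\CC_J$ for $0\le k\le N-2$ and $V(\varpi_n)_{q^{N-3+2k}}\simeq\F(L(k,N-1))\in\CC_J$ for $1\le k\le N-1$: indeed $L(0,k)$ is the head of $L(0)\conv\cdots\conv L(k)$ and $L(k,N-1)$ the head of $L(k)\conv\cdots\conv L(N-1)$, and $\F$ is an exact tensor functor, so these images are quotients of tensor products of the generators. To reach every $m$ I would exploit the $N$-periodicity of the defining datum: $V_{j+N}=V_j$ and $X(j+N)=\pstar^{2}X(j)$ with $\pstar=q^{N-1}$, so the double right dual $M\mapsto\rd\rd M$ is an exact monoidal auto-equivalence of $\CC_{B_{n}^{(1)}}$ which (using $n^*=n$) satisfies $\rd\rd V(\varpi_n)_x\simeq V(\varpi_n)_{q^{2(N-1)}x}$ and permutes the generators via $(V_j)_{X(j)}\mapsto(V_{j+N})_{X(j+N)}$; hence it restricts to an auto-equivalence of $\CC_J$. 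Applying its powers gives $V(\varpi_n)_{q^{2k+2l(N-1)}}\in\CC_J$ ($0\le k\le N-2$) and $V(\varpi_n)_{q^{N-3+2k+2l(N-1)}}\in\CC_J$ ($1\le k\le N-1$) for all $l\in\Z$. Since $N=2n$ is even, $\{2k\mid 0\le k\le N-2\}$ and $\{N-3+2k\mid 1\le k\le N-1\}$ are complete systems of, respectively, the even and the odd residues modulo $2(N-1)$, so these exponents exhaust $\Z$, proving (a).

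For (b), I would apply the Dorey-type surjection \eqref{eq:Doreynn} with $k=i$ after multiplying all spectral parameters by $q^m$, obtaining an epimorphism $V(\varpi_n)_{q^{i+1+m}}\tens V(\varpi_n)_{q^{N-i+m}}\epito V(\varpi_i)_{(-1)^{i-1}q_t q^m}$ (note $(-1)^{i+1}=(-1)^{i-1}$). Both tensor factors lie in $\CC_J$ by (a), hence so does the quotient. Assembling (a) and (b) and invoking the minimality in the definition of $\CC_{B_{n}^{(1)}}^{0}$ completes the argument.

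The steps are essentially bookkeeping once set up; the two points deserving a careful word are that $M\mapsto\rd\rd M$ preserves $\CC_J$ — which is purely formal given $V_{j+N}=V_j$, $X(j+N)=\pstar^{2}X(j)$ and the minimality defining $\CC_J$ — and the spectral-shift covariance of \eqref{eq:Doreynn}. I do not expect a genuine obstacle: beyond Lemma~\ref{lem:FK0k} the only substantive ingredient is the observation that the infinitude of $J$ (the type $A_\infty$ quiver) makes the translation auto-equivalence $\rd\rd$ available, which is exactly what lets the finite list produced by Lemma~\ref{lem:FK0k} sweep out all spectral parameters.
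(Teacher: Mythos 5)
Your argument is correct and follows essentially the same route as the paper's: first use Lemma~\ref{lem:FK0k} to place the $V(\varpi_n)_{q^m}$ for $m$ in a fundamental range into $\CC_J$, then sweep out all $m\in\Z$ via the $N$-periodicity of the datum (the paper invokes the relation $X(a+kN)=X(a)q^{k(2N-2)}$ directly, which is exactly what your $\rd\rd$-translation auto-equivalence implements), and finally obtain $V(\varpi_i)_{(-1)^{i-1}q_tq^m}$ for $1\le i\le n-1$ from the Dorey-type epimorphism \eqref{eq:Doreynn}. Your step (a) is in fact spelled out more carefully than the paper's, but the underlying proof is the same.
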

\Proof
By Lemma \ref{lem:FK0k}, the fundamental representation 
$V(\varpi_n)_{q^m}$ is in the image of the functor $\F$
if $m\in2\Z$ and $0\le m\le 2N-4$ or if
$m\in 2\Z+1$ and $N-1\le m\le 3N-5$.

By the relation $X(a+k N) = X(a)q^{k(2N-2)}$,
the category $\CC_J$ contains the fundamental representations $V(\varpi_n)_{q^m}$ for all $m \in \Z$. 

By \eqref{eq:Doreynn} we have an epimorphism
\eqn V(\varpi_n)_{q^{1+i+m}} \tens V(\varpi_n)_{q^{N-i+m}} \epito V(\varpi_i)_{(-1)^{i+1}q_t q^m}\eneqn
for $1\le i \le n-1$ and $m \in \Z$.
Hence $\CC_J$ contains  the other fundamental representations  in $\CC_{B_{n}^{(1)}}^{(0)}$.
\QED

By  Corollary \ref{cor:subquotients}, we have the following
 \begin{corollary} \label{cor:CJ=C0g}
For any simple module $V$  in $\CC_{B_{n}^{(1)}}^0$, there exists a simple module $L \in R^J\gmod$  such that $\F(L)\simeq V$.
\end{corollary}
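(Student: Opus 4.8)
*For any simple module $V$ in $\CC_{B_{n}^{(1)}}^0$, there exists a simple module $L \in R^J\gmod$ such that $\F(L)\simeq V$.*

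The plan is to combine the preceding proposition (which identifies $\CC_J$ with $\CC_{B_{n}^{(1)}}^0$) with the exactness of $\F$ and the description of its values on simple modules. First I would observe that since $V \in \CC_{B_{n}^{(1)}}^0 = \CC_J$, by the very definition of $\CC_J$ the module $V$ is a composition factor of some tensor product $(V_{j_1})_{X(j_1)} \tens \cdots \tens (V_{j_r})_{X(j_r)}$ of the generating fundamental representations. By Proposition \ref{prop:image of tau}~(i) we have $(V_{j_k})_{X(j_k)} \simeq \F(L(j_k))$ for each $k$, so $V$ is a simple subquotient of $\F(L(j_1)) \tens \cdots \tens \F(L(j_r))$.

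Next I would invoke Corollary \ref{cor:subquotients}: since $\F$ is exact (established in the previous subsection via Theorem \ref{thm:exact}, as the Cartan matrix $A^J$ is of type $A_\infty$, hence locally of finite type $A$), there exists a simple subquotient $L$ of the convolution product $L(j_1) \conv \cdots \conv L(j_r)$ in $R^J\gmod$ with $V \simeq \F(L)$. This $L$ is the desired simple module, and the proof is complete. Essentially the corollary is just the statement that the surjectivity of $\F$ on classes of simple objects in $\CC_J$ is automatic from Corollary \ref{cor:subquotients} once $\CC_J = \CC_{B_{n}^{(1)}}^0$ is known.

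There is no real obstacle here; the content is entirely in the preceding proposition (the inclusion $\CC_{B_{n}^{(1)}}^0 \subseteq \CC_J$, which uses Lemma \ref{lem:FK0k}, the relation $X(a+kN) = X(a)p^{*2k}$ to reach all spectral parameters, and the Dorey-type surjections of Proposition \ref{prop:Dorey} to reach the remaining fundamental representations $V(\varpi_i)_x$ for $1 \le i \le n-1$). The only point to be careful about is that Corollary \ref{cor:subquotients} produces a simple \emph{subquotient} of the convolution product rather than, say, its head; but that is exactly what is needed, since we only want \emph{some} simple $L$ with $\F(L) \simeq V$. One could also note, though it is not needed for this statement, that $L$ may be taken to be one of the simple modules $\hd(L(a_1,b_1) \conv \cdots \conv L(a_t,b_t))$ classified in Proposition \ref{prop:multisegments}, but pinning down which multisegment occurs is a separate (and more delicate) matter pursued elsewhere in the paper.
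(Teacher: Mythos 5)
Your proof is correct and follows exactly the same route as the paper: the paper simply asserts ``By Corollary~\ref{cor:subquotients}'' after establishing $\CC_J = \CC^0_{B_n^{(1)}}$, and you have filled in the obvious intermediate steps — identifying $V$ as a subquotient of a tensor product of $\F(L(j_k))$'s and invoking Corollary~\ref{cor:subquotients} to produce the required simple $L$.
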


Thus we have
 \begin{corollary}  \label{cor:surjective}
The functor $\F$ induces a surjective ring homomorphism 
$$\phi_{\F} \col K(R^J\gmod) \epito K(\CC_{B_{n}^{(1)}}^0)$$
\end{corollary}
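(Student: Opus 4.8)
The plan is to exploit the two structural properties of $\F$ that have already been established for this datum, namely that $\F$ is \emph{exact} (Subsection~\ref{subsec:quiver}, via Theorem~\ref{thm:exact}) and \emph{monoidal} (Theorem~\ref{thm:conv to tensor}), and then to read off surjectivity directly from Corollary~\ref{cor:CJ=C0g}. First I would check that $[M]\mapsto[\F(M)]$ descends to a well-defined additive map $\phi_\F\colon K(R^J\gmod)\to K(\CC_J)$. This is precisely where exactness is used: for every short exact sequence $0\to M'\to M\to M''\to 0$ in $R^J\gmod$ the sequence $0\to\F(M')\to\F(M)\to\F(M'')\to0$ is exact in $\CC_J$, so $[\F(M)]=[\F(M')]+[\F(M'')]$ and the defining relations of the Grothendieck group are respected. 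I would also note in passing that $\F(qM)\simeq\F(M)$ (the target category carries no grading, so the grading shift on the source is forgotten by $\F$), so that $\phi_\F$ is compatible with grading shift and sends $q$ to $1$; this is routine bookkeeping rather than a genuine step.

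Next I would upgrade $\phi_\F$ to a ring homomorphism. By Theorem~\ref{thm:conv to tensor} there are canonical isomorphisms $\F(R^J(0))\simeq\cor$ and $\F(M_1\conv M_2)\simeq\F(M_1)\otimes\F(M_2)$, functorial and compatible with the associativity and unit constraints. Since the product on $K(R^J\gmod)$ is induced by $\conv$ and the product on $K(\CC_{B_n^{(1)}}^0)$ by $\otimes$, these isomorphisms translate immediately into $\phi_\F([R^J(0)])=1$ and $\phi_\F([M_1][M_2])=\phi_\F([M_1])\phi_\F([M_2])$, combined with the grading remark above to handle the $\Z[q^{\pm1}]$-coefficients. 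Hence $\phi_\F$ is a ring homomorphism into $K(\CC_J)$.

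For surjectivity I would first invoke the Proposition just above, which identifies $\CC_J=\CC_{B_n^{(1)}}^{0}$, so $K(\CC_J)=K(\CC_{B_n^{(1)}}^{0})$; by Subsection~\ref{subsec:HL category} (or simply by Jordan--H\"older) this is a free $\Z$-module on the classes of the simple objects of $\CC_{B_n^{(1)}}^{0}$. Then Corollary~\ref{cor:CJ=C0g} says that every such simple $V$ is isomorphic to $\F(L)$ for some (necessarily simple) $L\in R^J\gmod$, so $[V]=\phi_\F([L])$ lies in the image of $\phi_\F$; since these classes $\Z$-span $K(\CC_{B_n^{(1)}}^{0})$ and $\phi_\F$ is additive, $\phi_\F$ is surjective. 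I do not expect any real obstacle here: the whole substance of the statement is concentrated in Corollary~\ref{cor:CJ=C0g}, which in turn rests on Corollary~\ref{cor:subquotients} and the exactness of $\F$. The only point demanding a moment's care is confirming that the coherence data supplied by Theorem~\ref{thm:conv to tensor} is exactly what is needed to make $\phi_\F$ multiplicative, but this is standard.
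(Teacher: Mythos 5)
Your proof is correct and follows the same route the paper takes implicitly: the paper simply says ``Thus we have'' after Corollary~\ref{cor:CJ=C0g}, relying on the exactness and monoidality of $\F$ (Theorems~\ref{thm:exact} and~\ref{thm:conv to tensor}) to give a ring homomorphism, and on Corollary~\ref{cor:CJ=C0g} together with the fact that simple classes span $K(\CC_{B_n^{(1)}}^0)$ to give surjectivity. You have merely made the bookkeeping explicit, including the point that $\phi_\F$ collapses the grading shift $q$ to $1$.
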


\Rem
Recall that a functor $F\col \shc\to\shc'$ is {\em essentially surjective}
if, for any $M'\in\shc'$, there exists $M\in\shc$ such that
$F(M)\simeq M'$. We do not know if
$\F\col R^J\gmod\to \CC_{B_{n}^{(1)}}^0$ is essentially surjective or not.
\enrem

\begin{prop} \label{prop:images}
Let $(a,b)$ be a segment with length $\ell:=b-a+1.$
 Then $\F(L(a,b))$  is non-zero if and only if $\ell \le N$. Moreover 
    we have
\begin{align*}
\F(L(a,b)) \simeq \cor \qtext{if $\ell=b-a+1 =N$}.
\end{align*}
\end{prop}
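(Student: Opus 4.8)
The plan is to prove the statement in two stages, following the same inductive philosophy as in Lemma \ref{lem:FK0k}. First I would establish that $\F(L(a,b)) = 0$ whenever $\ell = b-a+1 > N$, and simultaneously pin down the value when $\ell = N$; then I would show $\F(L(a,b)) \neq 0$ for $\ell \le N$. The key structural tool is that the functor $\F$ is a tensor functor (Theorem \ref{thm:conv to tensor}) and exact, so it interacts well with the convolution products and the exact sequences of Proposition \ref{prop:exact sequences}, and that by construction $\F(L(j)) \simeq (V_j)_{X(j)}$ with $V_j \in \{V(\varpi_1), V(\varpi_n)\}$.

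For the case $\ell = N$, I would use translation invariance to reduce to a convenient representative. Since $\F(L(a+N)) \simeq \rd\rd\F(L(a))$ and $X(a+kN) = X(a)\pstar^{2k}$, it suffices to treat one segment of length $N$ in each residue class, and the natural choice is $(0,N-1)$, for which Lemma \ref{lem:FK0k} already gives $\F(L(0,N-1)) \simeq \cor$. For a general segment $(a,a+N-1)$, I would decompose $L(a,a+N-1)$ as a head of a convolution of the $L(j)$'s, $a \le j \le a+N-1$, and track the images: using Proposition \ref{prop:denominator} and Lemma \ref{lem:head} repeatedly (as in the proof of Lemma \ref{lem:FK0k}), the successive heads telescope, and the Dorey-type surjections of Proposition \ref{prop:Dorey} — in particular \eqref{eq:Doreynn} and \eqref{eq:Doreyn1} together with the denominator \eqref{eq:denonn} — force the final image to be the trivial module $\cor$. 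Concretely, a length-$N$ window contains exactly one index $\equiv -1 \bmod N$ and one $\equiv 0 \bmod N$ (hence two copies of $V(\varpi_n)$) together with $N-2$ copies of $V(\varpi_1)$; the $V(\varpi_1)$'s build up $V(\varpi_n)$-shifts via \eqref{eq:Doreyn1}, and the two $V(\varpi_n)$'s at the spectral parameters dictated by $X$ pair off via \eqref{eq:denonn} to give $\cor$.

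Once $\F(L(a,b)) \simeq \cor$ is known for $\ell = N$, the vanishing for $\ell > N$ follows formally: if $b-a+1 > N$, choose $a \le c \le b$ with $c - a + 1 = N$, so $a = c-N+1 \le c \le b' := b$ and one can apply the exact sequence of Proposition \ref{prop:exact sequences}(iv) with the segments $(a,b)$ and $(c-N+1+1, \ldots)$ — more precisely, write $L(a,b)$ as a subquotient of $L(a, a+N-1) \conv L(a+N, b)$ or use that $L(a,b) \hookrightarrow$ or $\twoheadrightarrow$ a convolution containing a length-$N$ factor after applying $\F$; since $\F$ is a tensor functor, $\F$ of a convolution involving a length-$N$ segment is $\F(\text{something}) \tens \cor \tens \F(\text{something else})$, and exactness plus the fact that $\F$ of a simple module is simple or zero forces $\F(L(a,b)) = 0$. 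The cleanest route is: $L(a,b)$ is a quotient of $L(a,a+N-1) \conv L(a+N, b)$ when $b \ge a+N$ (by the head description of simple modules, Proposition \ref{prop:multisegments}(i), after checking the ordered-multisegment condition), so $\F(L(a,b))$ is a quotient of $\F(L(a,a+N-1)) \tens \F(L(a+N,b)) \simeq \cor \tens \F(L(a+N,b)) \simeq \F(L(a+N,b))$, giving by downward induction on $\ell$... but this goes the wrong direction, so instead I would argue that $\F(L(a,b))$, being simple or zero, admits a nonzero map from $\cor \tens \F(L(a+N,b))$ only if it is a constituent, and a weight/dimension count (or the classification of fundamental representations in $\CC^0_{B^{(1)}_n}$) rules out any nonzero simple object of the required highest weight — the trivial module is the only candidate, but it is already accounted for, contradiction unless $\F(L(a,b)) = 0$.

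For the converse, $\F(L(a,b)) \neq 0$ when $\ell \le N$: by translation I may assume $0 \le a$ and the window sits inside $\{0, 1, \ldots, N-1\}$ up to a shift, so $(a,b)$ with $b \le N-1$; then Lemma \ref{lem:FK0k} and Proposition \ref{prop:exact sequences} give explicit nonzero images (fundamental representations $V(\varpi_n)_{q^\bullet}$ when $a=0$ or $b = N-1$, and for interior segments one uses the Dorey surjection \eqref{eq:Doreykl} or builds the image inductively from $\F(L(j))$'s as in Lemma \ref{lem:FK0k}). The main obstacle I anticipate is the bookkeeping in the $\ell = N$ case: one must verify that the spectral parameters assigned by $X$ — especially the jump at $X(N-1) = q^{3N-5}$ and the definition $X(j) = \qt q^{2(j-1)}$ for $1 \le j \le N-2$ — line up exactly with the zeros of the relevant denominators $d_{V(\varpi_n),V(\varpi_n)}$, $d_{V(\varpi_1),V(\varpi_n)}$ so that each successive head computation lands on the trivial module rather than a nontrivial fundamental representation. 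This is the same kind of delicate parameter-matching that appears in Lemma \ref{lem:FK0k}, but now the endpoints of a length-$N$ window can straddle the "seam" at multiples of $N$, so I would handle the two cases ($0 \in \{a, \ldots, b\}$ versus the window landing between consecutive multiples of $N$) separately, using the duality relations $\F(L(a\pm N)) \simeq \F(L(a))^{(*)(*)}$ and $\rd(\cor) \simeq \cor$ to transport the computation back to the base window $(0, N-1)$.
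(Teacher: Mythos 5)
Your plan for the $\ell = N$ case is a genuinely different (more computational) route than the paper's: the paper does not redo the spectral-parameter bookkeeping for general windows but instead uses a short duality argument.  From the epimorphism $\F(L(a))\tens\F(L(a+1,a+N-1))\epito \cor$ one sees that $\F(L(a+1,a+N-1))$ is a right dual of $\F(L(a))$, and since $\F(L(a+N))\simeq \rd\rd\F(L(a))$ is in turn its right dual, applying Lemma \ref{lem:head} to $L(a+1,a+N)=L(a+1,a+N-1)\hconv L(a+N)$ gives $\F(L(a+1,a+N))\simeq\cor$ (and similarly with $a-1$), which propagates $\F(L(a,a+N-1))\simeq\cor$ from the base case $a=0$ to all $a$.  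Your computational approach could in principle be completed, but it requires handling separately the positions of the two $V(\varpi_n)$-factors inside the length-$N$ window, and you never actually carry out the check that the spectral parameters line up; you flag it as the main obstacle, and it really is one.  Similarly, your reduction for the non-vanishing case $\ell<N$ by translating the window into $\{0,\ldots,N-1\}$ does not work in general (a length-$(N-1)$ window with $a=2$ still straddles the seam after translation); the paper instead just uses the surjection $\F(L(a,b))\tens\F(L(b+1,a+N-1))\epito\cor$ directly.

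The genuine gap is in the vanishing for $\ell>N$.  You correctly notice that the surjection $\cor\tens\F(L(a+N,b))\epito\F(L(a,b))$ ``goes the wrong direction'' and does not give vanishing by itself, but the replacement you propose — a ``weight/dimension count'' or ``classification of fundamental representations'' ruling out the highest weight — is not a workable argument; the simple object on the right is not constrained in the way you suggest.  The actual key step, which your proposal is missing, is to produce \emph{two} distinct surjections onto $\F(L(a,a+N))$: from $\F(L(a))\simeq\F(L(a))\tens\F(L(a+1,a+N-1))$ and from $\F(L(a+N))\simeq\F(L(a,a+N-1))\tens\F(L(a+N))$.  If $\F(L(a,a+N))$ were nonzero, both surjections would be isomorphisms between simple modules, forcing $\F(L(a))\simeq\F(L(a+N))\simeq\F(L(a))_{q^{2N-2}}$, which is a contradiction because a fundamental representation is not isomorphic to a nontrivial spectral-parameter shift of itself.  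Once $\F(L(a,a+N))=0$, the case $\ell\ge N+2$ follows from $\F(L(a,a+N))\tens\F(L(a+N+1,b))\epito\F(L(a,b))$.  Without this two-surjection trick the vanishing argument does not close.
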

\begin{proof}
 In the course of
the proof, we omit the gradings. 

Assume that $\F(L(a,a+N-1)) \simeq \cor$ for some $a \in \Z$.
Then we have a homomorphism 
$$\F(L(a)) \tens \F(L(a+1,a+N-1)) \twoheadrightarrow \F(L(a,a+N-1)) \simeq  \cor.$$
Hence  $\F(L(a+1,a+N-1)$  is a right dual of $\F(L(a))$. 
It follows that $\F(L(a+N))\simeq \rd\rd \F(L(a))$ is a right dual of $\F(L(a+1,a+N-1)$. 
Since $L(a+1, a+N) = L(a+1,a+N-1) \hconv L(a+N)$, applying  Lemma \ref{lem:head}, 
we have 
$$\F(L(a+1, a+N))\simeq \F(L(a+1,a+N-1)) \hconv \F(L(a+N))\simeq \cor.$$
A similar argument shows that 
$$\F(L(a-1, a+N-2))\simeq \F(L(a-1)) \hconv \F(L(a+1,a+N-1)) \simeq \cor.$$
Combining it with \eqref{eq:0k} we conclude that $\F(L(a,a+N-1)) \simeq \cor$ for all $a \in \Z$.
\bigskip

Assume that $\ell=b-a+1 <N$. Then there exists a surjective homomorphism 
$$\F(L(a,b))\otimes \F(L(b+1,a+N-1)) \twoheadrightarrow \F(L(a,a+N-1))\simeq\cor $$
and hence  $\F(L(a,b)) \not\simeq 0$.

\bigskip 
Note that
$$\F(L(a))\simeq \F(L(a)) \tens \F(L(a+1,a+N-1))  \twoheadrightarrow \F(L(a,a+N))$$
and
$$\F(L(a+N))\simeq \F(L(a,a+N-1)) \tens \F(L(a+N)) \twoheadrightarrow \F(L(a,a+N))$$
for all $a \in \Z$.
 If $\F(L(a,a+N))$ is a non-zero module, then we have 
$$\F(L(a))\simeq \F(L(a,a+N)) \simeq \F(L(a+N))\simeq \F(L(a))_{q^{2N-2}},$$
which is a contradiction. Hence we conclude that $\F(L(a,a+N))\simeq0$ for all $a \in \Z$.
Now assume that $\ell=b-a+1 \ge N+2$. 
Then there exists a surjective homomorphism 
$$0 \simeq \F(L(a,a+N))\otimes \F(L(a+N+1,b)) \twoheadrightarrow \F(L(a,b))$$
and hence $\F(L(a,b)) \simeq 0$, as desired.
\QED

 \subsection{Quotients and localizations of  the category R-gmod}
In this subsection we will recall the quotient category and the localizations of $R\gmod$ introduced in \cite[\S 4.4 - \S 4.5]{KKK13A}.
For details of the constructions, see \cite[Appendix A and B]{KKK13A}.

Set $\As_{\beta} = R({\beta}) \gmod$ and set $\As =
\soplus_{{\beta} \in \rootl_J^+} \As_{\beta}$. 
Then we have  constructed  a
functor  $\F =\soplus_{{\beta} \in \rootl_J^+}
\F_\beta\col \As \rightarrow  \CC_J=\CC^0_{B^{(1)}_n} \subset \CC_{B^{(1)}_n}$.

Let $\Ss_N$ be the smallest Serre subcategory
of $\As$ such that
\eq&&
\parbox{70ex}{
\begin{enumerate}
\item $\Ss_N$ contains $L(a, a+N)$ for any $a\in\Z$,
\item $X \conv Y, \ Y \conv X \in \Ss_N$
for all $X \in \As$ and $Y \in \Ss_N$.
\end{enumerate}}
\eneq
Note that $\Ss_N$ contains $L(a,b)$ if $b\ge a+N$.

Let us denote by $\As/ \Ss_N$ the quotient category of $\As$ by $\Ss_N$
and denote by $\mathcal Q\col \As \rightarrow \As/ \Ss_N$ the canonical functor.
Since $\F$ sends $\Ss_N$ to $0$ by Proposition~\ref{prop:images}, 
the functor $\F \col \As \rightarrow
\CC_J$ factors through $\mathcal Q$, i.e., there exists  a functor
$\F'\col \As/\Ss_N\to \CC_J$  unique  up to an isomorphism
such that the diagram below  quasi-commutes.
\eqn&&
\xymatrix@C=8ex@R=4ex{\As\ar[r]^-{\mathcal{Q}}\ar[dr]_-{\F}
&\As/ \Ss_N\ar[d]^-{\F'}\\
& \CC_J .} 
\eneqn

\smallskip

Note that $\As$ and $\As/\Ss_N$ are tensor categories with the
convolution as tensor products. The module $R(0)\simeq \cor$ is a
unit object. Note also that $Q\seteq q R(0)$ is an invertible
central object of $\As/\Ss_N$ and $X\mapsto  Q\conv X\simeq X\conv
Q$ coincides with the grading shift functor.
 Moreover, the
functors $\mathcal Q$, $\F$ and $\F'$ are tensor
functors.

\smallskip

\bigskip
We will
recall the localizations  $\T'_N$ and $\T_N$ of $\As /\Ss_N$ introduced 
in \cite[\S4.5]{KKK13A} 
(where they are denoted by  $T'_J$ and $T_J$, respectively).

\begin{definition}
Let $S$ be the automorphism of $P_J\seteq\soplus_{a\in\Z}\Z\oep_a$
given by $S(\oep_a)=\oep_{a+N}$.
We define the bilinear form $B$ on $P_J$ by
\eq
&&B(x,y)=-\sum_{k>0}(S^kx,y)\qtext{for $x,y\in P_J$.}
\label{def:S}
\eneq
\end{definition}

\Def
We define the new tensor product
$\nconv\col\As/\Ss_N\times \As/\Ss_N\to\As/\Ss_N$ by
\eqn
X\nconv Y=q^{B(\al,\beta)} X\conv Y,
\eneqn
 where  $X\in(\As/\Ss_N)_{\al}$ and $Y\in(\As/\Ss_N)_{\beta}$.
\edf 
Then $\As/\Ss_N$ is endowed with a new
structure of tensor category by $\nconv$ as shown in
\cite[Appendix A.8]{KKK13A}.

Set
\begin{equation}
L_a\seteq L(a, a+N-1)
\quad\text{for} \  a \in \Z.
\label{eq;La}
\end{equation}

For $a,j\in\Z$, set
\eq \label{eq:faj}
&&f_{a,j}(z) \seteq (-1)^{\delta_{j,a+N}}z^{-\delta(a\le j<a+N-1)-\delta_{j,a+N}} \in \cor[z^{\pm1}].
\eneq

\begin{theorem}[{\cite[Theorem 4.5.8]{KKK13A}}] \label{thm:L_a commutes with X}
The following statements  hold.
\bnum
\item
$L_a$ is a \emph{central object} in  $\As/\Ss_N$; i.e., \bna
\item
$f_{a,j}(z)R_{L_a,L(j)_z}$ induces an isomorphism in $\As/\Ss_N$
$$R_a(X)\col L_a\nconv X\isoto X\nconv L_a$$
 functorial in  $X\in\As/\Ss_N$,
\item the diagram
$$\xymatrix@C=10ex
{L_a\nconv X\nconv Y\ar[r]^{R_a(X)\nconv Y}\ar[dr]_-{R_a(X\nconv Y)\hs{2ex} }&
X\nconv L_a\nconv Y\ar[d]^{X\nconv R_a(Y)}\\
& X\nconv Y \nconv L_a }
$$ is commutative in $\As/\Ss_N$ for any $X,Y\in\As/\Ss_N$.
\ee
\item The  isomorphism
$R_a(L_a)\col L_a\nconv L_a\isoto L_a\nconv L_a$
coincides with $\id_{L_a\nconv L_a}$ in $\As/\Ss_N$.
\item
For $a,b\in\Z$,
 the  isomorphisms
\eqn
 R_a(L_b)\col L_a\nconv L_b\isoto L_b\nconv L_a \ \text{and} \ R_b(L_a)\col L_b\nconv L_a\isoto
L_a\nconv L_b
\eneqn

 in $\As/\Ss_N$ are  inverse to each other.
\ee
\enth

\medskip

By the preceding theorem, $\{(L_a, R_a)\}_{a \in J}$
forms a \emph{commuting family
of central objects} in $(\AA / \Ss_N , \nconv)$
(See \cite[Appendix A. 4]{KKK13A}).
Following \cite[Appendix A. 6]{KKK13A}, we localize $(\AA / \Ss_N , \nconv)$ by this commuting family.
Let us denote by  $\T'_N$
the resulting category $(\AA / \Ss_N)[L_a^{\nconv -1}\mid a\in J]$ .
Let $\Upsilon \colon \AA / \Ss_N \to \T'_N$ be the  projection functor.
We denote by $\T_N$ the tensor category
$(\As / \Ss_N)\,[L_a\simeq\one\mid a\in J]$ and by $\Xi \col
\T'_N \to \T_N$ the canonical functor (see \cite[Appendix A.7]{KKK13A}
and \cite[Remark 4.5.9]{KKK13A}).
Thus we have a chain of tensor functors
$$\As \To[\ {\mathcal Q}\ ]\As/\Ss_N\To[\ \Upsilon\ ] \T'_N\seteq(\AA / \Ss_N)[L_a^{\nconv -1}\mid a\in J]
\To[\ \Xi\ ]
\T_N\seteq(\As / \Ss_N)[L_a\simeq\one\mid a\in J].$$

The categories $\T_N$ and $\T'_N$ are rigid tensor categories; i.e.,
every object has a right dual
and a left dual  (\cite[Theorem
4.6.3]{KKK13A}).

\medskip

Let us denote by  $\Irr(\T_N)$ the set of the isomorphism classes of simple objects in $\T_N$.
 Define an equivalence relation $\sim$ on $\Irr(\T_N)$ by $X \sim Y$ if and only if
$X \simeq q^c Y$ in $\T_N$ for some integer $c$.
We write $\Irr(\T_N)_{q=1}$
for $\Irr(\T_N) / \sim $.
\begin{prop}[{\cite[Proposition 4.7.3 (iii)]{KKK13A}}]
The set $\Irr(\T_N)_{q=1}$  is isomorphic 
to the set of ordered multisegments
$$\bl (a_1,b_1),\ldots,(a_r,b_r) \br$$ satisfying
\begin{equation} \label{eq:segments<N}
  b_k -a_k+1 <N \ \text{for any} \
    1 \le k \le r,
\end{equation}
by the correspondence
$$\bl (a_1,b_1),\ldots,(a_r,b_r) \br\mapsto \bl \Xi\circ \Upsilon
\circ{\mathcal Q} \br \bl \hd\bl L(a_1,b_1)\conv\cdots \conv L(a_r, b_r)\br\br.$$
\end{prop}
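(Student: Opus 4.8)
The plan is to track the successive effects of the three functors in the chain
$\As\xrightarrow{\ \mathcal Q\ }\As/\Ss_N\xrightarrow{\ \Upsilon\ }\T'_N\xrightarrow{\ \Xi\ }\T_N$
on the classification of simple objects, starting from the identification of $\Irr(\As)_{q=1}$ with ordered multisegments furnished by Proposition~\ref{prop:multisegments}. Throughout, write $M_\mu:=\hd\bl L(a_1,b_1)\conv\cdots\conv L(a_t,b_t)\br$ for the simple attached to an ordered multisegment $\mu=\bl(a_1,b_1),\ldots,(a_t,b_t)\br$, and call a segment \emph{long}, \emph{critical}, or \emph{short} according as its length is $>N$, $=N$, or $<N$ (so $L_a=L(a,a+N-1)$ is attached to the single critical segment $(a,a+N-1)$). \emph{Step 1: the Serre quotient.} Since $\As$ has finite length and $\Ss_N$ is a Serre subcategory, $\mathcal Q$ induces a bijection between $\Irr(\As/\Ss_N)_{q=1}$ and the classes in $\Irr(\As)_{q=1}$ not lying in $\Ss_N$, so it suffices to show $M_\mu\in\Ss_N$ if and only if $\mu$ has a long segment. ``If'' is immediate: $M_\mu$ is a quotient of a convolution one of whose factors is $L(a_k,b_k)$ with $b_k\ge a_k+N$, which lies in $\Ss_N$, and $\Ss_N$ is stable under convolution and under quotients. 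For ``only if'', let $\Ss_N^\circ$ be the Serre subcategory generated by $\set{M_\lambda}{\lambda\text{ has a long segment}}$; it contains every $L(a,a+N)$, and it is stable under $X\conv(-)$ because (standard, by induction from Proposition~\ref{prop:exact sequences}) every composition factor of a convolution $L(e_1,f_1)\conv\cdots\conv L(e_m,f_m)$ is of the form $M_{\mu'}$ for $\mu'$ obtained from $\bl(e_i,f_i)\br_i$ by successive \emph{merges} of two linked segments, and a merge (Proposition~\ref{prop:exact sequences}(iv),(v)) always produces a ``union'' segment whose length is at least the maximum of the two input lengths, hence never destroys a long segment. Thus $\Ss_N^\circ$ satisfies the two defining properties of $\Ss_N$, giving $\Ss_N\subseteq\Ss_N^\circ$; as $\Ss_N^\circ\subseteq\Ss_N$ by ``if'', we get $\Ss_N=\Ss_N^\circ$, and $\Irr(\As/\Ss_N)_{q=1}$ is identified with the set of ordered multisegments all of whose segments are short or critical.

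\emph{Step 2: absorbing critical segments.} By Theorem~\ref{thm:L_a commutes with X}, each $L_a$ is a central object of $(\As/\Ss_N,\nconv)$. Convolution by the nonzero module $L_a$ is exact and faithful on $\As$, hence on $\As/\Ss_N$, which makes $\Upsilon$ faithful and therefore simplicity-preserving and -reflecting. Using centrality to move $L_a$ into its order-correct position, one checks that for any ordered multisegment $\mu$ with no long segment, the big convolution attached to $\mu\cup(a,a+N-1)$ has simple head $M_{\mu\cup(a,a+N-1)}$ and, in $\As/\Ss_N$, also surjects onto $L_a\nconv\mathcal Q(M_\mu)$; since the latter is simple and nonzero, this yields $L_a\nconv\mathcal Q(M_\mu)\simeq q^{c}\,\mathcal Q(M_{\mu\cup(a,a+N-1)})$ for some $c\in\Z$, where $\mu\cup(a,a+N-1)$ denotes $\mu$ with the critical segment $(a,a+N-1)$ inserted in order. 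Iterating: writing $\mu^\circ$ for $\mu$ with all its critical segments removed (so $\mu^\circ$ has only short segments), with those critical segments having left endpoints $a_1,\dots,a_k$ listed with multiplicity, we obtain $\mathcal Q(M_\mu)\simeq q^{c}\,L_{a_1}\nconv\cdots\nconv L_{a_k}\nconv\mathcal Q(M_{\mu^\circ})$ in $\As/\Ss_N$.

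\emph{Step 3: the two localizations and conclusion.} In $\T'_N$ each $L_a$ is invertible, so $L_a\nconv(-)$ is an autoequivalence that fixes the ``short part'' $(-)^\circ$; together with Step 2 this gives the bijection (the content of part (ii) of \cite[Proposition~4.7.3]{KKK13A}) between $\Irr(\T'_N)_{q=1}$ and pairs consisting of a short ordered multisegment $\nu$ and an element of the free abelian group on $\set{[L_a]}{a\in\Z}$. Finally $\Xi$ imposes $L_a\simeq\one$, so $\Xi\Upsilon\mathcal Q(M_\mu)\simeq q^{c}\,\Xi\Upsilon\mathcal Q(M_{\mu^\circ})$ for every $\mu$; hence $\Xi\Upsilon\mathcal Q$ is surjective onto $\Irr(\T_N)_{q=1}$, the images $\Xi\Upsilon\mathcal Q(M_\nu)$ of short multisegments are nonzero, and $\Xi\Upsilon\mathcal Q(M_\nu)\simeq q^{c}\,\Xi\Upsilon\mathcal Q(M_{\nu'})$ forces $\nu=\nu'$ since passing to $\T_N$ only kills the free-group coordinate and leaves $\nu$ untouched. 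This is exactly the asserted bijection $\mu\mapsto(\Xi\circ\Upsilon\circ\mathcal Q)\bl\hd(L(a_1,b_1)\conv\cdots\conv L(a_r,b_r))\br$ between ordered multisegments with all segments short and $\Irr(\T_N)_{q=1}$. The one genuinely non-formal point is the ``only if'' in Step~1 --- identifying $\Ss_N$ on the nose --- which rests on the combinatorial observation (read off from the exact sequences of Proposition~\ref{prop:exact sequences}) that merging two linked segments cannot convert a long segment into short ones, so that a module lies in $\Ss_N$ precisely when every composition factor carries a long segment; Steps~2 and 3 are then bookkeeping inside the localization formalism of \cite[Appendix~A]{KKK13A}.
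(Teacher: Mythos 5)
The paper does not actually prove this proposition: it is quoted directly from \cite[Proposition~4.7.3(iii)]{KKK13A}, so there is no internal argument to compare yours against. Your three-step plan of tracing $\Irr(-)_{q=1}$ through the chain $\As\to\As/\Ss_N\to\T'_N\to\T_N$ is the natural one and almost certainly mirrors how \cite{KKK13A} organizes it (parts (i) and (ii) of their Proposition~4.7.3 record exactly the two intermediate parametrizations that your Steps~1 and~2 aim to produce).

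That said, each of your three steps leaves the genuinely hard point as an assertion. In Step~1 the identification $\Ss_N=\Ss_N^\circ$ rests on the combinatorial claim that every composition factor of $M_\nu\conv M_\lambda$, with $\lambda$ containing a long segment, again carries a long segment. Your ``merge'' heuristic is the right picture (the union segment produced by Proposition~\ref{prop:exact sequences}(iv),(v) always dominates both inputs in length), but the reduction of ``composition factor of an arbitrary convolution of segment modules'' to ``iterated two-segment merges'' needs a precise statement and an honest induction, not the phrase ``standard, by induction.'' In Step~2 you need $L_a\nconv\mathcal Q(M_\mu)$ to be simple already in $\As/\Ss_N$, and you route this through faithfulness of $\Upsilon$, which you in turn base on ``$L_a\nconv(-)$ is exact and faithful on $\As$, hence on $\As/\Ss_N$.'' The word ``hence'' is exactly where the work lies: one must show $L_a\conv Y\notin\Ss_N$ whenever $Y\notin\Ss_N$, i.e.\ that convolving with a critical segment never forces a long segment into every composition factor; this is again the Step~1 combinatorics, now in the opposite direction, and it is not free. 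In Step~3, ``passing to $\T_N$ only kills the free-group coordinate and leaves $\nu$ untouched'' is the conclusion you want, not an argument; injectivity of $\Xi\Upsilon\mathcal Q$ on short ordered multisegments has to be extracted from the explicit description of $\Hom$-spaces in the localization constructed in \cite[Appendix~A.6--A.7]{KKK13A}. So the skeleton is sound, but the proposal would not close into a proof without substantial work at each of these three joints --- which is precisely the content the present paper chooses to delegate to \cite{KKK13A} rather than reprove.
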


\bigskip

\subsection{Factoring through $\T_N$} \label{subsec:factoring}

Let us come back to the $B_n^{(1)}$-case.
Recall that $N=2n$.
We have constructed the tensor functor
$\F\col \As\To\CC_{B_{n}^{(1)}}^{0}$.  
In this subsection, we will show that the functor $\F$ 
factors through
the category  $\T_N$ under a {\em suitable} choice of 
\dcf $\{P_{i,j}(u,v)\}_{i,j \in J}$. 

First, we need  the following lemma.

\Lemma
 Set $V(a):=\F(L(a))$ for $a\in J$.  Recall that
$V(a)=V(\varpi_{i_a})_{X(a)}$ for some $i_a \in I_0$.
For $a,b\in J$, set
\eqn 
&& \Rnorm_{a,b}= (\Rnorm_{V(a),V(b)_{z_{V(b)}}}  \tens 1 \tens \cdots \tens 1) \circ   
(1 \tens \Rnorm_{V(a+1),V(b)} \tens 1 \tens \cdots \tens 1) \circ   
\cdots \\ 
&&\hskip 1em\circ (1\tens \cdots \tens 1 \tens  \Rnorm_{V(a+k), V(b)_{z_{V(b)}} }  \tens 1 \tens \cdots \tens 1)
\circ \cdots \circ  (1\tens \cdots \tens 1 \tens  \Rnorm_{V(a+N-1), V(b)_{z_{V(b)}}} )
\eneqn
and
$$g_{a,b}(z) \seteq \prod_{k=0}^{N-1} a_{V(\varpi_{i_{a+k}}),V(\varpi_{i_b})} (X(a+k)^{-1} X(b) (1+z))^{-1},$$ 
where $z=z_{V(b)}-1$ and $a_{M,N}(z_N)\in\cor[[z_N]]^\times$ denotes the ratio 
of $\Runiv_{M,N_{z_N}}$ and $\Rnorm_{M,N_{z_N}}$, i.e.,  
$\Runiv_{M,N_{z_N}}= a_{M,N}(z_N) \Rnorm_{M,N_{z_N}}$. 
Then the following diagram is commutative 
\eqn
\xymatrix{
\bl V(a) \tens \cdots \tens V(a+N-1) \br \tens V(b)_{z_{V(b)}} \ar[r]^{\Rnorm_{a,b}} 
\ar@{>>}[d]^{\vphi \tens V(b)_{z_{V(b)}} }
& V(b)_{z_{V(b)}} \tens  \bl V(a) \tens \cdots \tens V(a+N-1) \br 
\ar@{>>}[d]^{V(b)_{z_{V(b)}} \tens \vphi}  \\
\cor \tens V(b)_{z_{V(b)}}  \ar[d]_*[@]{\sim}& V(b)_{z_{V(b)}} \tens \cor \ar[d]_*[@]{\sim} \\
V(b)_{z_{V(b)}} \ar[r]^{g_{a,b}(z)} & V(b)_{z_{V(b)}}  \\
}
\eneqn
for any surjective  homomorphism $\vphi:  V(a) \tens \cdots \tens V(a+N-1)  \epito \cor$.
\end{lemma}
\Proof

Let
\eqn 
&& \Runiv= (\Runiv_{V(a),V(b)_{z_{V(b)}}} \tens 1 \tens \cdots \tens 1)\circ
(1 \tens \Runiv_{V(a+1),V(b)_{z_{V(b)}}} \tens 1 \tens \cdots \tens 1) \\  
&&\hs{40ex} \circ   
\cdots  
\circ  (1\tens \cdots \tens 1 \tens  \Runiv_{V(a+N-1),V(b)_{z_{V(b)}}}).
\eneqn

If $\Rnorm$ and  $g_{a,b}(z)$  in the diagram are replaced by $\Runiv$ and  the identity map respectively, then the square is commutative. 
Hence the lemma follows from $a_{M_x,N_y}(z)=a_{M,N}(x^{-1}yz)$ for any simple $\uqpg$ modules $M,N$ and $x,y \in \cor^\times$. 
\QED

We temporarily fix a \dcf $\{\Pt_{i,j}(u,v)\}_{i,j \in J}$,
e.g.\ as in \eqref{eq:PT},
and denote the corresponding functor from
$\As /\Ss_N$ to $\CC^{0}_{B_n^{(1)}}$ by $\Ft$. 
Recall $L_a= L(a, a+N-1)$ from \eqref{eq;La}
and $f_{a,b}(z)$ from \eqref{eq:faj}.
\begin{prop} \label{prop:hab}
Let 
\eq h_{a,b}(z)\seteq
\displaystyle f_{a,b}(z) g_{a,b}(z) \prod_{k=a}^{a+N-1} \Pt_{k,b}(0,z). 
\eneq
Then the following diagram is commutative
\eqn
\xymatrix@C+7em@R+0.5em{
\Ft(L_a \nconv L(b)_z)  \ar[r]^{\Ft(R_a(L(b)_z))} \ar[d]_*[@]{\sim} 
& \Ft(L(b)_z \nconv L_a) \ar[d]_*[@]{\sim} \\
\Ft(L_a) \tens \Ft(L(b)_z)    \ar[d]_*[@]{\sim}  ^{\vphi_a \tens \Ft(L(b)_z))}  
& \Ft(L(b)_z) \tens  \Ft(L_a) \ar[d]_*[@]{\sim}  ^{ \Ft(L(b)_z)) \tens \varphi_a}  \\
\cor \tens \Ft(L(b)_z) \ar[d]_*[@]{\sim}  
& \Ft(L(b)_z) \tens \cor \ar[d]_*[@]{\sim} \\
\Ft(L(b)_z) \ar[r]^{h_{a,b}(z)}  
& \Ft(L(b)_z)
}
\eneqn
for any isomorphism $\vphi_a : \Ft(L_a)  \isoto \cor$.
\end{prop}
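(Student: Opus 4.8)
The plan is to deduce the proposition from the preceding lemma by realizing $L_a$ as a quotient of a convolution product and transporting the $R$-matrix computation along the resulting surjection.

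First I would construct a surjective homomorphism $p_a\col L(a)\conv L(a+1)\conv\cdots\conv L(a+N-1)\epito L_a$ in $\As$. This can be done by induction on $N$: Proposition~\ref{prop:exact sequences}~(v), applied to the segments $(a+1,a+N-1)$ and $(a,a)$, yields an epimorphism $L(a)\conv L(a+1,a+N-1)\epito L_a$, and applying the (exact) functor $L(a)\conv(-)$ to the inductively obtained surjection $L(a+1)\conv\cdots\conv L(a+N-1)\epito L(a+1,a+N-1)$ and composing gives $p_a$. Since $\F$ is exact (Theorem~\ref{thm:exact}) and a tensor functor (Theorem~\ref{thm:conv to tensor}), $\F(p_a)$ becomes an epimorphism $V(a)\tens\cdots\tens V(a+N-1)\epito\F(L_a)\simeq\cor$, so that $\vphi\seteq\vphi_a\circ\F(p_a)$ is a surjection of the kind appearing in the hypothesis of the preceding lemma. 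Because $\F(p_a)\tens\F(L(b)_z)$ is again an epimorphism, it suffices to check that the square of Proposition~\ref{prop:hab} commutes after precomposition with $\Ft(p_a\conv L(b)_z)$, which the tensor structure of $\Ft$ identifies with $\F(p_a)\tens\F(L(b)_z)$ up to the evident grading shift.

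Next I would rewrite the top arrow after this precomposition. By Theorem~\ref{thm:L_a commutes with X}~(i)(a) the morphism $R_a(L(b)_z)$ is induced by $f_{a,b}(z)\,R_{L_a,L(b)_z}$, hence $\Ft\bl R_a(L(b)_z)\br=f_{a,b}(z)\,\Ft(R_{L_a,L(b)_z})$. Naturality of the quiver Hecke $R$-matrix in its first variable gives $R_{L_a,L(b)_z}\circ(p_a\conv L(b)_z)=(L(b)_z\conv p_a)\circ R_{L(a)\conv\cdots\conv L(a+N-1),\,L(b)_z}$, while multiplicativity of the $R$-matrices over convolution products (i.e.\ $R_{M_1\conv M_2,N}=(R_{M_1,N}\conv M_2)\circ(M_1\conv R_{M_2,N})$, see \cite[\S1.3--1.4]{KKK13A}) expresses $R_{L(a)\conv\cdots\conv L(a+N-1),\,L(b)_z}$ as the iterated composition, over $k=0,\ldots,N-1$, of the morphisms $L(a)\conv\cdots\conv R_{L(a+k),L(b)_z}\conv\cdots\conv L(a+N-1)$. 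Applying $\F$ and invoking Proposition~\ref{prop:image of tau}~(ii), with the affinization variable of $L(a+k)$ specialized to $0$, each factor $\F\bl R_{L(a+k),L(b)_z}\br$ equals $\Pt_{a+k,b}(0,z)\,\Rnorm_{V(a+k),V(b)_z}$; pulling these scalar factors out of the composition identifies $\F\bl R_{L(a)\conv\cdots\conv L(a+N-1),\,L(b)_z}\br$ with $\bigl(\prod_{k=a}^{a+N-1}\Pt_{k,b}(0,z)\bigr)\Rnorm_{a,b}$, where $\Rnorm_{a,b}$ is precisely the morphism of the preceding lemma.

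Finally I would assemble the pieces. Combining the above with $\vphi_a\circ\F(p_a)=\vphi$, the composite ``top arrow followed by the right-hand column'', precomposed with $\Ft(p_a\conv L(b)_z)$, becomes $f_{a,b}(z)\bigl(\prod_{k=a}^{a+N-1}\Pt_{k,b}(0,z)\bigr)$ times the composite $(\text{canonical iso }\cor\tens\F(L(b)_z)\isoto\F(L(b)_z))\circ(\F(L(b)_z)\tens\vphi)\circ\Rnorm_{a,b}$; by the commutativity asserted in the preceding lemma this composite equals $g_{a,b}(z)$ times $(\text{canonical iso})\circ(\vphi\tens\F(L(b)_z))$, so the whole thing equals $h_{a,b}(z)=f_{a,b}(z)g_{a,b}(z)\prod_{k=a}^{a+N-1}\Pt_{k,b}(0,z)$ times $(\text{canonical iso})\circ(\vphi\tens\F(L(b)_z))$. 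On the other hand, precomposing ``left-hand column followed by the bottom arrow'' with the same epimorphism visibly yields the same expression, and since $\Ft(p_a\conv L(b)_z)$ is right-cancellable the square commutes; the answer does not depend on $\vphi_a$ because $h_{a,b}(z)$ does not. The step I expect to require the most care is the bookkeeping of the grading shifts concealed in $\nconv$ and in the vertical isomorphisms marked $\sim$, which are governed by the bilinear form $B$ of \eqref{def:S} and by $\Ft$ being a graded tensor functor with $\Ft(L_a)\simeq\cor$, exactly as in \cite[Appendix~A]{KKK13A}; all the genuinely analytic input --- the evaluation of $g_{a,b}$ through the universal $R$-matrix --- has already been absorbed into the preceding lemma.
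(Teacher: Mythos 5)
Your proposal is correct and takes essentially the same approach as the paper: the paper's own proof is precisely the terse statement ``combining \eqref{eq:Fphi} with the above lemma, we have a commutative diagram [\,\ldots\,]'' followed by division by $f_{a,b}(z)$, and your argument is simply an honest unpacking of what that ``combining'' amounts to --- constructing the epimorphism $p_a$, applying naturality and multiplicativity of the quiver Hecke $R$-matrix to decompose $R_{L_a,L(b)_z}$ after precomposition with $p_a\conv L(b)_z$, and invoking Proposition~\ref{prop:image of tau}(ii) factorwise to produce $\prod_{k}\Pt_{k,b}(0,z)\,\Rnorm_{a,b}$ so that the preceding lemma supplies $g_{a,b}(z)$.
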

\Proof
Combining  \eqref{eq:Fphi} with the above lemma, we have a commutative diagram
\eqn
\xymatrix@C+7em@R+0.5em{
\Ft(L_a \nconv L(b)_z)  \ar[r]^{\Ft(R_{ L_a,L(b)_z})} \ar[d]_*[@]{\sim}
 & \Ft(L(b)_z \nconv L_a) \ar[d]_*[@]{\sim} \\
\Ft(L(b)_z) \ar[r]^{g_{a,b}(z) \prod_{k=a}^{a+N-1} \Pt_{k,b}(0,z)}  & \Ft(L(b)_z),
}\eneqn
where the vertical isomorphisms are the compositions of the isomorphisms of the diagram in the proposition.
Since $R_a(L(b)_z) =f_{a,b}(z) R_{L_a,L(b)_z}$ by Theorem \ref{thm:L_a commutes with X} (i) (a),  we get the desired result.
\QED

\begin{corollary}
The function $h_{a,b}(z)$ has no poles and no zeros at $z=0$.
\end{corollary}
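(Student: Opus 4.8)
The plan is to obtain the corollary directly from the commutative diagram of Proposition~\ref{prop:hab}, by exploiting that the morphism it starts from is an \emph{isomorphism} and that $\Ft(L(b)_z)$ has a very explicit shape. In outline: transport the invertibility of $R_a(L(b)_z)$ through $\Ft$ and through the diagram, to learn that ``multiplication by $h_{a,b}(z)$'' is an automorphism of $\Ft(L(b)_z)$, and then read off what this forces on the scalar $h_{a,b}(z)$.

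First I would recall from Theorem~\ref{thm:L_a commutes with X}~(i)~(a) that $R_a(L(b)_z)=f_{a,b}(z)\,R_{L_a,L(b)_z}$ induces an isomorphism $L_a\nconv L(b)_z\isoto L(b)_z\nconv L_a$ in $\As/\Ss_N$. Since $\Ft$ is a functor, it preserves isomorphisms, so $\Ft(R_a(L(b)_z))$ is an isomorphism of $\cor[[z]]\tens\uqpg$-modules. In the diagram of Proposition~\ref{prop:hab} this is the top horizontal arrow, and all the vertical arrows there are isomorphisms (the unlabelled ones by the tensor structure and unit constraints of $\Ft$, and $\vphi_a\tens\Ft(L(b)_z)$ because $\vphi_a\col\Ft(L_a)\isoto\cor$ is an isomorphism). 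Hence the bottom horizontal arrow, namely multiplication by the scalar $h_{a,b}(z)$ on $\Ft(L(b)_z)$, is an isomorphism of $\cor[[z]]\tens\uqpg$-modules as well.

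Next I would bring in the description $\Ft(L(b)_z)\simeq\cor[[z]]\tens V(b)$ from Proposition~\ref{prop:image of tau}~(i). This is a \emph{nonzero} free $\cor[[z]]$-module, of rank $\dim V(b)$, since $V(b)$ is one of the fundamental modules $V(\varpi_1)$ or $V(\varpi_n)$. Applying the endomorphism ``multiplication by $h_{a,b}(z)$'' to a basis vector shows $h_{a,b}(z)\in\cor[[z]]$, i.e.\ $h_{a,b}(z)$ has no pole at $z=0$; and this endomorphism being invertible forces $h_{a,b}(z)$ to be a unit of $\cor[[z]]$, i.e.\ $h_{a,b}(0)\neq0$, so it has no zero at $z=0$ either. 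Both conclusions thus come out at once from the single fact that a scalar endomorphism of a nonzero free module over the local ring $\cor[[z]]$ is invertible precisely when the scalar lies in $\cor[[z]]^\times$.

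The one point that requires care — and the place I would expect the only (minor) obstacle to lie — is that both halves of the statement genuinely have content: a priori $h_{a,b}(z)=f_{a,b}(z)\,g_{a,b}(z)\prod_{k=a}^{a+N-1}\Pt_{k,b}(0,z)$ is merely a Laurent-type expression, since the factor $f_{a,b}(z)$ may carry a pole at $z=0$ while the $\Pt_{k,b}(0,z)$ vanish there to positive order. So I must make sure that the bottom row of the Proposition~\ref{prop:hab} diagram is realized as multiplication by $h_{a,b}(z)$ on the integral module $\cor[[z]]\tens V(b)$ itself, and not merely on its localization at $z$; this is exactly what is secured by the fact that $\Ft$ is being applied to an honest isomorphism in $\As/\Ss_N$ rather than to a morphism that is only defined after inverting $z$. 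Granting that, ``no pole'' is a genuine consequence rather than an assumption, and ``no zero'' follows from invertibility, completing the argument.
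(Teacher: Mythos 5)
Your proposal is correct and follows essentially the same approach as the paper: since $R_a(L(b)_z)$ is an isomorphism and $\Ft$ preserves isomorphisms, the diagram of Proposition~\ref{prop:hab} identifies multiplication by $h_{a,b}(z)$ with an isomorphism of $\Ft(L(b)_z)\simeq\cor[[z]]\tens V(b)$, so $h_{a,b}(z)\in\cor[[z]]^\times$. You simply make explicit the (implicit) point about scalar endomorphisms of a nonzero free $\cor[[z]]$-module.
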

\Proof
 Since $R_a(L(b)_z)$ is an isomorphism,
$\Ft(R_a(L(b)_z))\simeq h_{a,b}(z) $ 
is a well-defined isomorphism. Hence  $h_{a,b}(z)$ has no pole and no zero  at $z=0$.
\QED

\begin{prop}
The following diagram is commutative
\eqn
\xymatrix@C+7em@R+0.5em{
\Ft(L_a \nconv L_b)  \ar[r]^{\Ft(R_a(L_b))} \ar[d]_*[@]{\sim} & \Ft(L_b \nconv L_a) \ar[d] _*[@]{\sim}\\
\Ft(L_a) \tens \Ft(L_b)   \ar[d]_*[@]{\sim}^{\vphi_a \tens \vphi_b} & \Ft(L_b)  \tens \Ft(L_a) \ar[d]_*[@]{\sim}^{\vphi_b \tens \vphi_a}  \\
\cor \tens \cor  \ar[d]_*[@]{\sim} & \cor \tens \cor  \ar[d]_*[@]{\sim} \\
\cor  \ar[r]^{ \prod_{k=b}^{b+N-1} h_{a,k}(0)}  & \cor
}
\eneqn
for any isomorphisms $\vphi_a : \Ft(L_a)  \isoto \cor$ and  $\vphi_b : \Ft(L_b)  \isoto \cor$. 
\end{prop}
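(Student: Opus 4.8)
The plan is to compute $\Ft(R_a(L_b))$ by transferring the question from the central object $L_b=L(b,b+N-1)$ to the $N$-fold convolution $M\seteq L(b)\conv L(b+1)\conv\cdots\conv L(b+N-1)$, whose interaction with $L_a$ is controlled one tensor factor at a time by Proposition~\ref{prop:hab}. First I would note that, up to a grading shift, $L_b$ is a quotient of $M$: iterating Proposition~\ref{prop:exact sequences}(v) with left segment $(b,b+m)$ and right segment $(b+m+1,b+m+1)$ yields surjections $L(b,b+m)\conv L(b+m+1)\epito q^{-1}L(b,b+m+1)$, and since convolution is right exact these compose to a surjection $M\epito q^{c_0}L_b$ in $\As$ for some $c_0\in\Z$. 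Passing to $\As/\Ss_N$ and writing $M^{\nconv}=L(b)\nconv\cdots\nconv L(b+N-1)$, which differs from $M$ only by the invertible object $Q=qR(0)$, this gives a surjection $\bar p\col M^{\nconv}\epito Q^{c}L_b$ for some $c\in\Z$.

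Next I would apply the functor $R_a(-)$ of Theorem~\ref{thm:L_a commutes with X} to $\bar p$. Functoriality in $X$ produces a commutative square relating $R_a(M^{\nconv})$ to $R_a(L_b)$, while iterating Theorem~\ref{thm:L_a commutes with X}(i)(b) identifies $R_a(M^{\nconv})$ with the composition, over $k=0,\dots,N-1$, of the morphisms
\[
L(b)\nconv\cdots\nconv L(b+k-1)\nconv R_a(L(b+k))\nconv L(b+k+1)\nconv\cdots\nconv L(b+N-1),
\]
i.e.\ of $L_a$ being slid past the factors $L(b+k)$ one at a time. Applying the tensor functor $\Ft$ and using $\Ft(L_a)\simeq\cor$, $\Ft(L(b+k))=V(b+k)=V(\varpi_{i_{b+k}})_{X(b+k)}$, and $\Ft(L(b+k)_z)\vert_{z=0}=V(b+k)$, Proposition~\ref{prop:hab} specialized at $z=0$ turns each slide into multiplication by the scalar $h_{a,b+k}(0)\in\cor^\times$ on the corresponding tensor factor. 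Hence, after the unit isomorphisms that identify both $\Ft(L_a\nconv M^{\nconv})$ and $\Ft(M^{\nconv}\nconv L_a)$ with $V(b)\tens\cdots\tens V(b+N-1)$, the map $\Ft(R_a(M^{\nconv}))$ becomes multiplication by $\prod_{k=0}^{N-1}h_{a,b+k}(0)=\prod_{k=b}^{b+N-1}h_{a,k}(0)$.

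Finally I would close the diagram. By naturality of the tensor-structure isomorphisms of $\Ft$, the two vertical arrows of the $\Ft$-image of the functoriality square, read through the identifications above and those of the proposition (the isomorphism $\vphi_a$ does not interact with $\bar p$, and the shift $Q^{c}$ contributes the same invertible scalar on both sides), both become the single surjection $\psi\col V(b)\tens\cdots\tens V(b+N-1)\to\cor$ obtained from $\Ft(\bar p)$ and $\vphi_b$. Thus the square reads $\psi\circ(\lambda\cdot\id)=c\cdot\psi$, where $\lambda=\prod_{k=b}^{b+N-1}h_{a,k}(0)$ and $c\in\cor^\times$ is the scalar by which $\Ft(R_a(L_b))\col\cor\to\cor$ acts; since $\psi\neq0$ this forces $c=\lambda$, as claimed. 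I expect the main obstacle to be precisely this last identification of the two vertical maps — i.e.\ checking that the quotient being produced is genuinely $L_b$ rather than some other simple quotient — together with the bookkeeping of grading shifts when passing between $\conv$ and $\nconv$ and between $M$ and $Q^{c}L_b$; the remaining steps are a routine diagram chase on top of Proposition~\ref{prop:hab} and Theorem~\ref{thm:L_a commutes with X}.
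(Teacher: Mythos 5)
Your argument is the paper's argument: both exploit the surjection $L(b)\nconv\cdots\nconv L(b+N-1)\epito L_b$ (up to a power of $Q$), slide $L_a$ past the factors $L(b+k)$ one at a time via the functoriality of $R_a(-)$, convert each slide to the scalar $h_{a,b+k}(0)$ by Proposition~\ref{prop:hab}, and then use surjectivity of the vertical maps to read off the scalar on the lower square. The paper simply writes down the composite $h$ and the two-row ladder diagram without spelling out the surjection $M\epito Q^{c}L_b$ or the identification $h=R_a(M^{\nconv})$, but the content is identical.
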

\begin{proof}
Let 
$h$ be a composition of homomorphisms in $\As /\Ss_N$ as follows:
\eqn
&&h:=\bl (L(b)\nconv \cdots \nconv L(b+N-2))\nconv R_a(L(b+N-1))  \br
 \circ \cdots  \\
&& \circ \bl (L(b)\nconv \cdots \nconv L(b+k-1))\nconv R_a(L(b+k)) \nconv (L(b+k+1) \nconv \cdots \nconv L(b+N-1))  \br
 \circ \cdots  \\
&&  \circ \bl R_a(L(b)) \nconv (L(b+1)\nconv \cdots \nconv L(b+N-1)) \br.
\eneqn
Consider the following diagram
\eqn
\xymatrix@C+5em@R-0em{
\Ft(L_a \nconv L(b)\nconv \cdots \nconv L(b+N-1))  \ar[r]^{\Ft(h)} \ar@{>>}[d] 
& \Ft( L(b)\nconv \cdots \nconv L(b+N-1) \nconv L_a)  \ar@{>>}[d] \\
\Ft(L_a \nconv L_b)  \ar[r]^{\Ft(R_a(L_b))}  \ar[d]_*[@]{\sim} & \Ft(L_b\nconv L_a)  \ar[d]_*[@]{\sim} \\
\cor   \ar[r]^{\prod_{k=b}^{b+N-1}h_{a,k}(0)}&  \cor, 
}
\eneqn
where the vertical isomorphisms are the compositions of the isomorphisms of the diagram in the proposition.

The upper square is commutative. 
By  Proposition \ref{prop:hab}, 
the outer square is also commutative.
Hence we get the commutativity of the lower square, as desired.
\end{proof}

\begin{corollary} \label{cor:lab=prodhak0}
Set $\la_{a,b}:=\prod_{k=b}^{b+N-1}h_{a,k}(0)$.
Then we have
\eqn
\la_{a,a}=1 \quad (a \in \Z) \qtext{and} \quad \la_{a,b} \la_{b,a} =1 \quad (a,b \in \Z).
\eneqn
\end{corollary}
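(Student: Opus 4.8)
The plan is to read off both identities from the commutative diagram in the preceding proposition, combined with the structural properties of the central objects $L_a$ collected in Theorem~\ref{thm:L_a commutes with X}.

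First I would reformulate the preceding proposition as follows. Since $\Ft$ is a tensor functor and $\Ft(L_a)\simeq\cor$ for every $a$ (because $L_a=L(a,a+N-1)$ is a segment of length $N$, so Proposition~\ref{prop:images} applies), after transporting along the chain of isomorphisms $\Ft(L_a\nconv L_b)\isoto\Ft(L_a)\tens\Ft(L_b)\xrightarrow{\vphi_a\tens\vphi_b}\cor\tens\cor\isoto\cor$ the morphism $\Ft(R_a(L_b))$ becomes multiplication by the scalar $\la_{a,b}=\prod_{k=b}^{b+N-1}h_{a,k}(0)$. This scalar lies in $\cor^\times$ because $R_a(L_b)$ is an isomorphism in $\As/\Ss_N$, as already observed in the corollary to Proposition~\ref{prop:hab}.

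Then, for the first identity, I would specialize to $b=a$: by Theorem~\ref{thm:L_a commutes with X}(ii) we have $R_a(L_a)=\id_{L_a\nconv L_a}$ in $\As/\Ss_N$, hence $\Ft(R_a(L_a))=\id$, and therefore $\la_{a,a}=1$. For the second identity, by Theorem~\ref{thm:L_a commutes with X}(iii) the morphisms $R_a(L_b)$ and $R_b(L_a)$ are mutually inverse in $\As/\Ss_N$; applying $\Ft$ gives $\Ft(R_a(L_b))\circ\Ft(R_b(L_a))=\id_{\Ft(L_b\nconv L_a)}$, and translating the left-hand side through the identifications above yields multiplication by $\la_{a,b}\la_{b,a}$, so $\la_{a,b}\la_{b,a}=1$.

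The only point that needs care — and it is bookkeeping rather than a genuine obstacle — is to check that the vertical isomorphisms used for the $R_b(L_a)$-diagram and for the $R_a(L_b)$-diagram agree on $\Ft(L_a\nconv L_b)$, so that the two scalars really compose to $\la_{a,b}\la_{b,a}$ when the morphisms are composed. Since the preceding proposition fixes these vertical arrows in terms of the chosen isomorphisms $\vphi_a\col\Ft(L_a)\isoto\cor$ and $\vphi_b\col\Ft(L_b)\isoto\cor$, which do not depend on the order of the tensor factors, this compatibility is immediate, and no further input is needed.
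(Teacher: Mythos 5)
Your proof is correct and takes the same approach as the paper: read $\la_{a,b}$ off as the scalar by which $\Ft(R_a(L_b))$ acts via the preceding proposition's commutative diagram, then invoke Theorem~\ref{thm:L_a commutes with X}(ii) for $\la_{a,a}=1$ and (iii) for $\la_{a,b}\la_{b,a}=1$. The extra observation about compatibility of the vertical isomorphisms is a sound bit of bookkeeping that the paper leaves implicit.
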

\begin{proof}
 By Theorem \ref{thm:L_a commutes with X} (ii),  we have $R_a(L_a) = \id_{L_a\nconv L_a}$ and hence 
 $\Ft(R_a(L_a))$ is the identity map on $\cor$. Hence $\la_{a,a}=1$ by the above proposition.
 Similarly, (iii) in Theorem \ref{thm:L_a commutes with X}  implies $\la_{a,b} \la_{b,a} =1$.
\QED

\Lemma \label{lem:lambdaab}
Let $\cor$ be a field and let $N$ be a positive integer.
Let $\{h_{a,b}\}_{a,b \in \Z}$ be a family of elements in $\cor^\times$.
Set
\eqn
\la_{a,b}:=\prod_{k=b}^{b+N-1} h_{a,k} \quad (a,b \in \Z).
\eneqn

Then the system of equations in $\{c_{a,b}\}_{a,b \in \Z}$
\eq \label{eq:cab}
&&\left\{\ba{l}\prod_{k=a}^{a+N-1} c_{k,b}=h_{a,b} \quad (a,b \in \Z), \\[1ex]
c_{a,a}=1\quad (a \in \Z)\qtext{and } \
c_{a,b} \ c_{b,a}=1 \quad (a,b \in \Z) 
\ea\right.
\eneq
has a solution 
if and only if
\eq \label{eq:hab}
\la_{a,a}=1\quad (a \in \Z)\qtext{and } \
\la_{a,b} \ \la_{b,a}=1 \quad (a,b \in \Z),
\eneq
\end{lemma}

\begin{proof}
The ``only if'' part is straightforward
because $\la_{a,b}=\prod_{k=a}^{a+N-1}\prod_{j=b}^{b+N-1} c_{k,j}$. 

\medskip
Assume that  \eqref{eq:hab}  holds. 
 By the construction, we have
\eq
 \dfrac{\la_{a,b+1}}{\la_{a,b}}=\dfrac{h_{a,b+N}}{h_{a,b}} 
\qtext{for any $a,b \in \Z$.}
\label{eq:lah}
\eneq

First, we take an  arbitrary $\{c_{a,b}\}_{0\le a,b \le N-2}$ satisfying 
$c_{a,a}=1$, $c_{a,b} \ c_{b,a}=1$. 

Next, we extend it to $\{c_{a,b}\}_{0\le a,b \le N-1}$ by
\eqn
&&c_{N-1,b}:=h_{0,b} \prod_{k=0}^{N-2} c_{k,b}^{-1} \quad (0 \le b \le N-2), \\*
&&c_{a,N-1}:= (c_{N-1,a})^{-1} \quad (0 \le a \le N-2), \\*
&&c_{N-1,N-1}=1.
\eneqn
Then we  define $c_{a,b}$ for all $a,b \in \Z$ 
 by extending  them  under  the conditions
\eq 
\label{eq:ca+Nb}&&c_{a+N,b}:=\dfrac{h_{a+1,b}}{h_{a,b}} c_{a,b}\ \qtext{and}  \\
\label{eq:cab+N}&&  c_{a,b+N}:=\dfrac{h_{b,a}}{h_{b+1,a}} c_{a,b} \qtext{for all } a,b \in \Z.
\eneq
Note that such an extension is possible because
$\vphi_{a,b}\seteq \dfrac{h_{a+1,b}}{h_{a,b}}$ and
$\psi_{a,b}\seteq\dfrac{h_{b,a}}{h_{b+1,a}}$ satisfy the cocycle condition
$$\vphi_{a,b+N}\psi_{a,b}=\psi_{a+N,b}\vphi_{a,b},$$
as seen in
\eqn
\dfrac{\vphi_{a,b+N}\psi_{a,b}}{\psi_{a+N,b}\vphi_{a,b}}
&=&\dfrac{h_{a+1,b+N}}{h_{a, b+N}}
\dfrac{h_{b,a}}{h_{b+1,a}}\dfrac{h_{b+1,a+N}}{h_{b,a+N}}
\dfrac{h_{a,b}}{h_{a+1, b}}
=\dfrac{h_{a+1,b+N}}{h_{a+1, b}}\dfrac{h_{a,b}}{h_{a, b+N}}
\dfrac{h_{b,a}}{h_{b,a+N}}\dfrac{h_{b+1,a+N}}{h_{b+1,a}}\\
&=&\dfrac{\la_{a+1,b+1}}{\la_{a+1, b}}\dfrac{\la_{a,b}}{h_{a, b+1}}
\dfrac{\la_{b,a}}{\la_{b,a+1}}\dfrac{\la_{b+1,a+1}}{\la_{b+1,a}}
=1.
\eneqn
Here the third equality follows from \eqref{eq:lah}.
By the construction, we can easily verify
\eq
c_{a,a}=1\ \text{for $0\le a\le N-1$ and}\ 
c_{a,b} \, c_{b,a}=1\ \text{for $0\le a,b\le N-1$.}
\label{eq:0N}
\eneq

\medskip
Then we have
\eqn
c_{a,b+N} \, c_{b+N,a} = \dfrac{h_{b,a}}{h_{b+1,a}}\dfrac{h_{b+1,a}}{h_{b,a}}\,c_{a,b}\, c_{b,a}=c_{a,b}\, c_{b,a},
\eneqn
and similarly
\eqn
c_{a+N,b} \, c_{b,a+N} = c_{a,b}\,c_{b,a}
\eneqn
for $a,b \in \Z$. 
It follows  from \eqref{eq:0N} that 
\eqn
c_{a,b}\, c_{b,a}=1\qtext{for all $a,b \in \Z$.}
\eneqn
\medskip

By \eqref{eq:lah}, we get
\eqn
c_{a+N,a+N} 
&&= \dfrac{h_{a+1,a+N}}{h_{a,a+N}}c_{a,a+N}=\dfrac{h_{a+1,a+N}}{h_{a,a+N}}\dfrac{h_{a,a}}{h_{a+1,a}} c_{a,a} \\
&&=\dfrac{h_{a+1,a+N}}{h_{a+1,a}} \dfrac{h_{a,a}}{h_{a,a+N}} c_{a,a}
=\dfrac{\la_{a+1,a+1}}{\la_{a+1,a}} \dfrac{\la_{a,a}}{\la_{a,a+1}} c_{a,a}=c_{a,a}
\eneqn
for all $a\in \Z$. Thus \eqref{eq:0N} implies
 $c_{a,a}=1$ for all $a\in \Z$.
\medskip

Note that by the definition we have
\eqn
\prod_{k=0}^{N-1}c_{k,b} =h_{0,b} \quad (0\le b \le N-2).
\eneqn
When $b=N-1$, we have
\eqn
\prod_{k=0}^{N-1}c_{k,N-1}=\prod_{k=0}^{N-1}c_{N-1,k}^{-1}=\prod_{k=0}^{N-2}c_{N-1,k}^{-1}
=\prod_{k=0}^{N-2}\bl h_{0,k}^{-1} \prod_{j=0}^{N-2} c_{j,k}\br.
\eneqn
Since $\prod_{0\le j,k\le N-2}c_{j,k}=1$, we have
$$\prod_{k=0}^{N-1}c_{k,N-1}=\prod_{k=0}^{N-2} h_{0,k}^{-1}
 = h_{0,N-1} \, \la_{0,0}^{-1}=h_{0,N-1}.$$ 
Hence we have  showed that 
$$\prod_{k=a}^{a+N-1} c_{k,b}=h_{a,b}$$ for  $a=0$ and $0 \le b\le N-1$.
Then by \eqref{eq:ca+Nb}, it holds for all $a \in \Z$ and $0 \le b\le N-1$.
Now by \eqref{eq:cab+N}, it holds for all $a \in \Z$ and $b \in \Z$, as desired.
\end{proof}


\begin{lemma} \label{lem:cabuv}
Let $\{h_{a,b}(z)\}_{a,b \in\Z} $ be a family of elements in  $\cor[[z]]$.
Assume that \eqn
\la_{a,b}:=\prod_{k=b}^{b+N-1} h_{a,k}(0) \quad (a,b \in \Z)
\eneqn
satisfies that 
\eqn
\la_{a,a}=1 \quad (a \in \Z) \qtext{and} \quad \la_{a,b} \la_{b,a} =1 \quad (a,b \in \Z).
\eneqn
Then there exists a  family $\{c_{a,b}(u,v)\}_{a,b \in\Z} $
of elements in $\cor[[u,v]]$
satisfying 
\eqn
&& c_{a,a}(u,v)=1 \quad  (a \in \Z), \quad c_{a,b}(u,v)\, c_{b,a}(v,u)=1 \quad (a,b \in \Z), \\
&&\text{and} \quad h_{a,b}(z)=\prod_{k=a}^{a+N-1} c_{k,b}(0,z) \quad (a,b \in \Z). 
\eneqn 
\end{lemma}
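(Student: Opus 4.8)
The plan is to reduce Lemma~\ref{lem:cabuv} to Lemma~\ref{lem:lambdaab} by factoring off constant terms. Note first that the hypothesis forces every $\la_{a,b}$ to be invertible in $\cor$ (by $\la_{a,a}=1$ and $\la_{a,b}\la_{b,a}=1$), hence every factor $h_{a,k}(0)$ of $\la_{a,b}=\prod_{k=b}^{b+N-1}h_{a,k}(0)$ is nonzero, so in fact $h_{a,b}(z)\in\cor[[z]]^\times$ for all $a,b$; write $h_{a,b}(z)=h_{a,b}(0)\,g_{a,b}(z)$ with $g_{a,b}(z)\in1+z\cor[[z]]$. Applying Lemma~\ref{lem:lambdaab} over the field $\cor$ to the family $\{h_{a,b}(0)\}_{a,b\in\Z}$ — whose associated quantities $\prod_{k=b}^{b+N-1}h_{a,k}(0)$ are precisely the $\la_{a,b}$ of the present lemma, so \eqref{eq:hab} holds — yields $\{\bar c_{a,b}\}_{a,b\in\Z}$ in $\cor^\times$ with $\bar c_{a,a}=1$, $\bar c_{a,b}\bar c_{b,a}=1$ and $\prod_{k=a}^{a+N-1}\bar c_{k,b}=h_{a,b}(0)$. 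It therefore suffices to produce a family $\{\tilde c_{a,b}(u,v)\}$ in $1+(u,v)\cor[[u,v]]$ with $\tilde c_{a,a}=1$, $\tilde c_{a,b}(u,v)\,\tilde c_{b,a}(v,u)=1$ and $\prod_{k=a}^{a+N-1}\tilde c_{k,b}(0,z)=g_{a,b}(z)$: then $c_{a,b}(u,v):=\bar c_{a,b}\,\tilde c_{a,b}(u,v)$ lies in $\cor[[u,v]]^\times$ and has all required properties (the diagonal and skew relations come from multiplying the corresponding relations for $\bar c$ and $\tilde c$, and the product relation gives $h_{a,b}(0)g_{a,b}(z)=h_{a,b}(z)$). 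The conceptual point here is that the obstruction measured by the $\la_{a,b}$ lives entirely in the constant terms and has already been absorbed into $\bar c$, so the remaining $\tilde c$-problem carries no obstruction.

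Next I would linearize the $\tilde c$-problem. Since $\cor$ has characteristic $0$, $\log$ and $\exp$ are mutually inverse isomorphisms between the additive group $(u,v)\cor[[u,v]]$ and the multiplicative group $1+(u,v)\cor[[u,v]]$, and similarly in one variable. Set $\eta_{a,b}(z):=\log g_{a,b}(z)\in z\cor[[z]]$ and look for $\tilde c_{a,b}(u,v)=\exp\gamma_{a,b}(u,v)$ with $\gamma_{a,b}\in(u,v)\cor[[u,v]]$; the three conditions become $\gamma_{a,a}=0$, $\gamma_{a,b}(u,v)+\gamma_{b,a}(v,u)=0$, and $\sum_{k=a}^{a+N-1}\gamma_{k,b}(0,z)=\eta_{a,b}(z)$. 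I would then take the ansatz $\gamma_{a,b}(u,v):=\delta_{a,b}(v)-\delta_{b,a}(u)$ for power series $\delta_{a,b}(z)\in z\cor[[z]]$. This makes the skew relation automatic, and since $\delta_{b,a}(0)=0$ one has $\gamma_{k,b}(0,z)=\delta_{k,b}(z)$; so the whole problem reduces to finding $\{\delta_{a,b}(z)\}_{a,b\in\Z}$ in $z\cor[[z]]$ with $\delta_{a,a}(z)=0$ and $\sum_{k=a}^{a+N-1}\delta_{k,b}(z)=\eta_{a,b}(z)$ for all $a,b$.

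Finally I would build the $\delta_{a,b}(z)$ one value of $b$ at a time. For fixed $b$, the window-sum condition is equivalent to the single relation $\sum_{k=0}^{N-1}\delta_{k,b}(z)=\eta_{0,b}(z)$ together with the recursion $\delta_{a+N,b}(z)=\delta_{a,b}(z)+\eta_{a+1,b}(z)-\eta_{a,b}(z)$, so the sequence $(\delta_{k,b}(z))_{k\in\Z}$ is freely determined by its $N$ initial values $\delta_{0,b}(z),\dots,\delta_{N-1,b}(z)$ subject to that one relation. Telescoping the recursion writes $\delta_{b,b}(z)=\delta_{r,b}(z)+E_b(z)$, where $r\in\{0,\dots,N-1\}$, $r\equiv b\pmod N$, and $E_b(z)\in z\cor[[z]]$ is an explicit alternating sum of $\eta$-terms; imposing $\delta_{b,b}(z)=0$ is thus the single further condition $\delta_{r,b}(z)=-E_b(z)$. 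Since $N=2n\ge4$, there remain at least $N-1\ge3$ free initial values $\delta_{k,b}(z)$ with $k\ne r$, so the two linear conditions $\delta_{r,b}(z)=-E_b(z)$ and $\sum_{k=0}^{N-1}\delta_{k,b}(z)=\eta_{0,b}(z)$ can both be satisfied (pin down $\delta_{r,b}(z)$ and absorb the rest into one other coordinate). Everything then reassembles as indicated above. The only step needing genuine care — and the one I would regard as the technical core — is this last bookkeeping: keeping the telescoped $\eta$-sums straight and checking that the diagonal condition never conflicts with the window-sum relation; the rest is formal power-series manipulation. (One can also run the same window-sum construction multiplicatively inside the group $1+z\cor[[z]]$ and bypass $\log$/$\exp$ altogether, but the additive version keeps the indices cleaner.)
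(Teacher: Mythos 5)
Your argument is correct and follows essentially the same route as the paper: split off the constant terms and handle them with Lemma~\ref{lem:lambdaab}, then construct one-variable series $\vphi_{a,b}(v)$ with the prescribed window-products and diagonal normalization, and finally combine them via the antisymmetric formula $c_{a,b}(u,v)\propto \vphi_{a,b}(v)/\vphi_{b,a}(u)$, which in your notation is $\exp\bigl(\delta_{a,b}(v)-\delta_{b,a}(u)\bigr)$. The only real departure is cosmetic: the paper builds the $\vphi_{a,b}$ multiplicatively by choosing arbitrary units on a length-$(N-1)$ window containing the diagonal index and extending by division, thereby avoiding $\log/\exp$ and the characteristic-zero hypothesis (harmless here since $\cor\supset\Q(q)$), whereas you linearize first and then do the equivalent telescoping bookkeeping additively.
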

\Proof
By Lemma \ref{lem:lambdaab}, there exists a family
$\{c_{a,b}\}_{a,b \in\Z}$ of elements in  $\cor^\times$ satisfying
\eqn
&& c_{a,a}=1 \quad  (a \in \Z), \quad c_{a,b} c_{b,a}=1 \quad (a,b \in \Z), \\
&&\text{and} \quad h_{a,b}(0)=\prod_{k=a}^{a+N-1} c_{k,b} \quad (a,b \in \Z). 
\eneqn 

Set $\vphi_{a,a}(v)=1\in \cor[[v]]$ for $a \in \Z$. 
For $a,b \in \Z$  such that $a < b < a+N-1$, 
  we choose $\vphi_{a,b}(v) \in \cor[[v]]$   satisfying
$\vphi_{a,b}(0)=c_{a,b}$.
Then $\vphi_{a,b}(0)=c_{a,b}$ for  $a,b \in \Z$ with $a \le b < a+N-1$.
We extend them to all $a,b \in\Z$ inductively 
\eqn 
\vphi_{a,b}(v) := 
\begin{cases}
\dfrac{h_{a-N+1,b}(v)}{\prod_{k=a-N+1}^{a-1} \vphi_{k,b}(v)}, & {for}\ b < a, \\
\dfrac{h_{a,b}(v)}{\prod^{k=a+N-1}_{a+1} \vphi_{k,b}(v)}, & {for}\ b \ge a+N-1.
\end{cases}
\eneqn
Then we have
\eqn 
\vphi_{a,b}(0)=c_{a,b} \qtext{for} \ a,b \in\Z, 
\qtext{and} \ \prod_{k=a}^{a+N-1} \vphi_{k,b}(v) = h_{a,b}(v) \qtext{for} \ a,b \in \Z.
\eneqn
Define 
\eqn c_{a,b}(u,v):=\dfrac{\vphi_{a,b}(v)}{\vphi_{b,a}(u)} c_{b, a} \in \cor[[u,v]] \qtext{for} \ a,b \in\Z.
\eneqn
Then  $c_{a,a}(u,v)=1$ holds trivially, and we have 
\eqn
c_{a,b}(u,v)\,c_{b,a}(v,u) =  \dfrac{\vphi_{a,b}(v)}{\vphi_{b,a}(u)}
\dfrac{\vphi_{b,a}(u)}{\vphi_{a,b}(v)}c_{b,a} c_{a, b} = 1.
\eneqn 
We also have
\eqn 
c_{a,b}(0,z)= \dfrac{\vphi_{a,b}(z)}{\vphi_{b,a}(0)} c_{b, a} = \vphi_{a,b}(z)
\eneqn
and hence
\eqn
 h_{a,b}(z)=\prod_{k=a}^{a+N-1} \vphi_{k,b}(z)=
\prod_{k=a}^{a+N-1} c_{k,b}(0,z) \qtext{for} \ a, b \in \Z,
\eneqn
as desired.
\QED

\medskip
Let 
$\{c_{a,b}(u,v)\}_{a,b \in \Z}$  be a family obtained by applying Lemma \ref{lem:cabuv} to the family $\{h_{a,b}(z)\}_{a,b \in \Z}$ in \eqref{prop:hab}.
Note that  Corollary~\ref{cor:lab=prodhak0}
asserts that
the condition 
for $\{h_{a,b}(z)\}_{a,b \in \Z}$ in Lemma \ref{lem:cabuv} holds.

We now give a new  \dcf
 by
\eqn 
P_{a,b}^{\rm{new}}(u,v):=c_{a,b}(u,v)^{-1} \Pt_{a,b}(u,v)=c_{b,a}(v,u) \Pt_{a,b}(u,v),
\eneqn
and denote the corresponding functor from $\As/\Ss_N$ to $\CC_{B_n^{(1)}}$ by $\F'$.
Note that  the new \dcf $\{P_{a,b}^{\rm{new}}(u,v)\}_{a,b\in \Z}$ also satisfies the conditions in \eqref{eq:Pij}
including the equality
$$Q_{a,b}(u,v)=\delta(a\neq b)P_{a,b}^{\rm{new}}(u,v) P_{b,a}^{\rm{new}}(v,u)$$
since $c_{a,b}(u,v) c_{b,a}(v,u)=1$. 

\Th \label{thm:cij}
Let $a\in J$ and $M \in \As/\Ss_N$.  
The the diagram
\eqn &&\hs{-4ex}
\ba{l}\xymatrix@C=8.5ex{
\F'(L_a\nconv M)\ar[d]^-{\F'(R_a(M))}\ar[r]^-\sim
&\F'(L_a)\tens\F'(M)\ar[r]^-{ g_a \tens \F'(M)}
&\cor\tens \F'(M)\ar[dr]\\
\F'(M\nconv L_a)\ar[r]^-\sim&\F'(M)\tens\F'(L_a)\ar[r]^-{\F'(M)\tens
g_a} &\F'(M)\tens  \cor  \ar[r]&\F'(M) }\ea\label{eq:1com} \eneqn
 is commutative  for any isomorphism $ g_a \col\F'(L_a)\isoto\cor$.
\enth

\Proof
It is enough to show the commutativity of the diagram in the cases $M=L(b)_z$ for $b \in J$.
By the same reasoning in Proposition \ref{prop:hab}, we need to show
$f_{a,b}(z) g_{a,b}(z) \prod_{k=a}^{a+N-1} P^{\rm{new}}_{k,b}(0,z)=1$.
This follows from
$$
\displaystyle f_{a,b}(z) g_{a,b}(z) \prod_{k=a}^{a+N-1} P^{\rm{new}}_{k,b}(0,z) 
=h_{a,b}(z) \prod_{k=a}^{a+N-1} c_{k,b}(0,z)^{-1} =1,$$
where  the last  equality is a consequence of  Lemma \ref{lem:cabuv}.
\QED

Then \cite[Proposition A.7.3]{KKK13A} and  \cite[Proposition A.7.2]{KKK13A}  imply
\begin{theorem}
There exists an exact tensor functor $\tF \col \T_N\to \CC^0_{{B_{n}^{(1)}}}$
such that the following diagram quasi-commutes:
\begin{equation}
 \xymatrix@C+7ex{
\As\ar[r]^-{\mathcal Q}\ar[drr]_-{\F}&\As/\Ss_N\ar[r]^{\Upsilon }\ar[rd]^(.55){\F'}
&\T'_N\ar[d]\ar[r]^{ \Xi  }&\T_N\ar[dl]^-{\tF}\\
&&\CC^0_{{B_{n}^{(1)}}}\,.}
\end{equation}
\end{theorem}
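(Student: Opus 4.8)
The plan is to obtain $\tF$ as a formal consequence of the universal properties of the two localizations $\T'_N$ and $\T_N$, the only substantial input being Theorem~\ref{thm:cij}, which has just been established. First I would record the set-up. Recall that $\T'_N=(\As/\Ss_N)[L_a^{\nconv -1}\mid a\in J]$ is the localization of $\As/\Ss_N$ at the commuting family of central objects $\{(L_a,R_a)\}_{a\in J}$ produced by Theorem~\ref{thm:L_a commutes with X}, and $\T_N=(\As/\Ss_N)[L_a\simeq\one\mid a\in J]$ is its further quotient identifying each $L_a$ with the unit, in the sense of \cite[Appendix A.6, A.7]{KKK13A}; write $\Upsilon$ and $\Xi$ for the two projection functors. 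Let $\F'\col\As/\Ss_N\to\CC^0_{B_n^{(1)}}$ be the tensor functor attached to the new \dcf $\{P_{a,b}^{\mathrm{new}}(u,v)\}$. By Proposition~\ref{prop:images} one has $\F'(L_a)=\F(L(a,a+N-1))\simeq\cor$, so each central object $L_a$ is sent by $\F'$ to the unit object $\cor$, in particular to an invertible object; fix isomorphisms $g_a\col\F'(L_a)\isoto\cor$.

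Next I would apply \cite[Proposition A.7.2]{KKK13A}: since $\F'$ is a tensor functor carrying every central object $L_a$ to an invertible object, it factors, uniquely up to isomorphism, through $\Upsilon$, yielding a tensor functor $\T'_N\to\CC^0_{B_n^{(1)}}$. To descend this further along $\Xi$ one must check that the isomorphisms $g_a$ are compatible with the structure morphisms $R_a$ of the commuting family, i.e.\ that the square \eqref{eq:1com} commutes for every $a\in J$ and every $M\in\As/\Ss_N$; by Theorem~\ref{thm:cij} this is exactly so. Hence \cite[Proposition A.7.3]{KKK13A} applies and $\F'$ descends to a tensor functor $\tF\col\T_N\to\CC^0_{B_n^{(1)}}$ with $\tF\circ\Xi\circ\Upsilon\simeq\F'$, and therefore $\tF\circ\Xi\circ\Upsilon\circ\mathcal Q\simeq\F$; this is the asserted quasi-commutativity.

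Exactness of $\tF$ then follows at once: $\F$ is exact by Theorem~\ref{thm:exact}, hence so is $\F'$ (exactness of the \duality functor is insensitive to the choice of \dcf, as in the remark following the duality theorem, and $\mathcal Q$ is exact), the projections $\Upsilon$ and $\Xi$ are exact, and every object of $\T_N$ is, up to grading shift, the image of an object of $\As/\Ss_N$; since $\tF\circ\Xi\circ\Upsilon=\F'$ is exact, so is $\tF$.

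The main obstacle, already surmounted upstream, is the verification of the hypothesis of \cite[Proposition A.7.3]{KKK13A}, namely Theorem~\ref{thm:cij}. Its proof reduces, exactly as in Proposition~\ref{prop:hab}, to the single scalar identity $f_{a,b}(z)\,g_{a,b}(z)\prod_{k=a}^{a+N-1}P_{k,b}^{\mathrm{new}}(0,z)=1$, which in turn holds by the construction of the correction factors $c_{a,b}(u,v)$ in Lemma~\ref{lem:cabuv} (with $c_{a,b}(u,v)\,c_{b,a}(v,u)=1$ and $\prod_{k=a}^{a+N-1}c_{k,b}(0,z)=h_{a,b}(z)$), together with Corollary~\ref{cor:lab=prodhak0}, which certifies that the compatibility condition $\la_{a,a}=1$, $\la_{a,b}\la_{b,a}=1$ needed to run Lemma~\ref{lem:cabuv} indeed holds. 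Thus the genuine work lay in producing the duality-coefficient correction $c_{a,b}(u,v)$, and the passage to $\T_N$ recorded in the theorem is the purely categorical harvest of that construction.
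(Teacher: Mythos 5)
Your proof is correct and takes essentially the same approach as the paper: the paper's proof is simply the one-line citation of \cite[Proposition A.7.2]{KKK13A} and \cite[Proposition A.7.3]{KKK13A} after having established Theorem~\ref{thm:cij}, and your argument spells out precisely what those propositions deliver and why their hypotheses are met. The additional discussion of exactness is a welcome elaboration of what the paper leaves implicit in the citation.
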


Hence $\tF$ induces a surjective ring homomorphism $\phi_{\tF} \col K(\T_N)_{q=1} \epito K(\CC^0_{{B_{n}^{(1)}}})$, where 
$K(\T_N)_{q=1}\seteq K(\T_N)/(q-1)K(\T_N)$. 

\begin{corollary}\label{cor:tFs}
The functor $\tF$ sends a non-zero object in $\T_N$ to a non-zero module in $\CC^0_{{B_{n}^{(1)}}}$.
In particular, it sends a simple to a simple.
\end{corollary}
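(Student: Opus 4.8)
The plan is to deduce Corollary \ref{cor:tFs} from the rigidity of $\T_N$, the exactness of $\tF$, and the fact that $\tF$ does not annihilate the unit object. The three facts I would assemble first are: (i) the unit object $\one$ of $\T_N$ is a \emph{simple} object --- it corresponds to the empty multisegment under the parametrization of $\Irr(\T_N)_{q=1}$ by ordered multisegments of \cite{KKK13A}, equivalently $\one\simeq(\Xi\circ\Upsilon\circ\mathcal{Q})(R(0))$; (ii) since $\tF$ is a tensor functor, $\tF(\one)$ is isomorphic to the trivial $U_q'(B_n^{(1)})$-module $\cor$, which is a non-zero object of $\CC^0_{B_n^{(1)}}$; (iii) $\tF$ is exact, as established in the preceding theorem.

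Given a non-zero object $X\in\T_N$, I would choose a right dual $X^{*}$ in $\T_N$, with evaluation $\varepsilon\col X\nconv X^{*}\to\one$ and coevaluation $\eta\col\one\to X^{*}\nconv X$, and observe that the triangle identity $(\varepsilon\nconv\id_X)\circ(\id_X\nconv\eta)=\id_X$ (modulo the unit and associativity isomorphisms) forces $\varepsilon$ to be an epimorphism: its image is a subobject of the simple object $\one$, hence equals $0$ or $\one$, and if it were $0$ then $\varepsilon=0$, whence $\id_X=0$ by the triangle identity, i.e.\ $X=0$, a contradiction. Applying the exact functor $\tF$ to the epimorphism $\varepsilon$ and inserting the monoidal structure isomorphisms $\tF(X\nconv X^{*})\simeq\tF(X)\tens\tF(X^{*})$ and $\tF(\one)\simeq\cor$ yields an epimorphism $\tF(X)\tens\tF(X^{*})\epito\cor$. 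If $\tF(X)$ were $0$, the source would vanish, forcing $\cor=0$, which is absurd; hence $\tF(X)\neq0$. This proves the first assertion.

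For the second assertion, let $S$ be a simple object of $\T_N$. By the parametrization of $\Irr(\T_N)_{q=1}$ in \cite{KKK13A} we may write $S\simeq q^{c}(\Xi\circ\Upsilon\circ\mathcal{Q})(L)$ for some $c\in\Z$ and some simple module $L\in R^{J}\gmod$ (the head of an appropriate convolution of segment modules $L(a_k,b_k)$). Since the square relating $\tF$ and $\F$ quasi-commutes, $\tF(S)\simeq q^{c}\F(L)$; and by the theorem asserting that $\F$ carries a simple module to a simple module or to $0$ (the first displayed theorem after Theorem \ref{thm:conv to tensor}), $\tF(S)$ is simple or zero. By the first part it is non-zero, hence simple.

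The argument is essentially formal once rigidity is in hand, so I do not expect a genuine obstacle; the one point that must be handled with care --- and the only place where the \emph{simplicity} of $\one$ is really used, rather than merely $\End_{\T_N}(\one)=\cor$ --- is the claim that $\varepsilon$ is an epimorphism in $\T_N$, together with correctly tracking the monoidal coherence isomorphisms of $\tF$ when transporting $\varepsilon$ to $\CC^0_{B_n^{(1)}}$.
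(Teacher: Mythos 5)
Your proof is correct and takes essentially the same approach as the paper: rigidity of $\T_N$ yields an evaluation epimorphism onto the unit, and exactness of $\tF$ together with $\tF(\one)\simeq\cor$ then forces $\tF(X)\neq0$. You supply extra detail that the paper's proof leaves implicit --- using the simplicity of $\one$ and the triangle identity to justify that the evaluation map is an epimorphism, and invoking the parametrization of $\Irr(\T_N)_{q=1}$ together with the earlier theorem that $\F$ sends simples to simples-or-zero to justify the ``in particular'' clause --- but the core argument is identical.
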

\Proof
Let $M$ be a non-zero object in $\T_N$. Since the category $\T_N$ is rigid, 
there exists $M^* \in \T_N$ such that there is an epimorphism 
$M^* \nconv M \epito \cor.$
 Since $\tF$ is exact, we have a surjective homomorphism
$\tF(M^*) \tens \tF(M) \epito \cor$. In particular, $\tF(M)$ is non-zero.
\QED

In the rest of this subsection, we will show that
 $\phi_{\tF}$ is an isomorphism and  induces a bijection between the classes of simple objects. 

\Lemma \label{lem:cancel}
Let  $S$ and $S'$ be simple modules in $\As$ with
$S \simeq \hd(L(a_1,b_1) \conv \cdots\conv L(a_r,b_r))$ and 
$S\rq{} \simeq \hd(L(a'_1,b'_1) \conv \cdots\conv L(a'_t,b'_t))$ 
where
$\bl(a_1,b_1),\ldots,(a_r,b_r)\br$ and $\bl(a'_1,b'_1),\ldots,(a'_t,b'_t)\br$
are ordered multisegments such that 
$b_i-a_i+1 \le N$ $(1 \le i \le r)$ and $b'_j-a'_j+1 \le N$ $(1 \le j \le t)$.
Assume that $a_1=a_1'$ and $\F(S) \simeq \F(S')$. 
Then we have $$\F(S_0)\simeq \F(S'_0)\not\simeq0,$$
where
$S_0=\hd(L(a_1+1,b_1) \conv \cdots \conv L(a_r,b_r))$  and $S'_0=\hd(L(a'_1+1,b'_1)\conv \cdots\conv L(a'_t,b'_t))$. 
\enlemma
\Proof  By the condition on the lengths of segments,
$S$ and $S'$ are simple as objects of $\T_N$.
Hence $\F(S) \simeq \F(S')$ is a simple module by Corollary~\ref{cor:tFs}. 
By  \cite[Theorem 3.2, Corollary 6.1]{V02}, we have 
\eqn 
S\simeq L(a_1)\hconv S_0 \qtext{and} \ S' \simeq L(a_1)\hconv S'_0, 
\eneqn
which implies that there exists an epimorphism
$\F(L(a_1))\tens \F(S_0)\epito \F(S)$.
Hence $\F(S_0)$ is non-zero and it is a simple module.
Since $\F(S)$ is a simple module, we have
$ \F(L(a_1)) \hconv \F(S_0) \simeq \F(S)$
and similarly $\F(L(a_1)) \hconv \F(S'_0)\simeq\F(S')$.

They imply that $ \F(L(a_1)) \hconv \F(S_0) \simeq  \F(L(a_1)) \hconv \F(S'_0)$.

By \cite[Corollary 3.14]{KKKO14}, we have $\F(S_0) \simeq \F(S'_0)$, as desired.
\QED

\Prop \label{prop:injective}
Let $S$ and $S'$ be simple objects  in $\T_N$ with 
$$\tF(S) \simeq \tF(S'),$$
then we have $S\simeq S'$ in $\T_N$ up to a grading shift.
\enprop
\Proof
Let $$\bl(a_1,b_1),\ldots,(a_r,b_r)\br\qtext{and} \quad\bl(a'_1,b'_1),\ldots,(a'_t,b'_t)\br$$ 
be the multisegments associated with $S$ and $S'$, respectively. 
Then we have
$b_i-a_i+1<N$ $(1 \le i \le r)$ and $b'_j-a'_j+1 <N$ $(1 \le j \le t)$. 
We shall show that if $\tF(S) \simeq \tF(S')$, then we have $a_1=a'_1$.
Once we show it, then the proposition follows
from the above lemma
by induction on $\sum (b_i-a_i+1) + \sum (b'_j-a'_j+1)$.

Assume that $a_1> a'_1$. Then the multisegment 
$((a_1,a_1+N-1),  (a'_1,b'_1), \ldots , (a'_t,b'_t))$ is ordered. Hence 
we have
\eqn
&&\tF(\hd(L(a_1,a_1+N-1)\conv L(a'_1,b'_1)\conv\cdots \conv L(a'_t,b'_t)))
\\
&&\hskip 2em \simeq
\tF(S')\simeq  
\tF(S)
\simeq
\tF(\hd(L(a_1,b_1)\conv\cdots \conv L(a_r,b_r))).
\eneqn
Applying Lemma \ref{lem:cancel} ($b_1-a_1+1$) times,
we have
\eqn
&&\tF(\hd(L(b_1+1,a_1+N-1)\conv L(a'_1,b'_1)\conv\cdots \conv L(a'_t,b'_t))) \\
&&\hskip 2em\simeq
\tF(\hd(L(a_2, b_2)\conv\cdots \conv L(a_r,b_r))).
\eneqn
Note that 
$b_1+1 > a_1  \ge a_2$. Thus 
 we  have
\eqn
&&\tF(\hd(L(b_1+1,a_1+N-1)\conv L(a'_1,b'_1)\conv\cdots \conv L(a'_t,b'_t))) \\
&&\hskip 2em \simeq
\tF(\hd(L(b_1+1, b_1+N) \conv L(a_2, b_2)\conv\cdots \conv L(a_r,b_r))).
\eneqn
Now we apply  Lemma \ref{lem:cancel} again in order to obtain
\eqn
&&\tF(\hd(L(a'_1,b'_1)\conv\cdots \conv L(a'_t,b'_t))) \\
&&\hskip 2em\simeq
\tF(\hd(L(a_1+N, b_1+N) \conv L(a_2, b_2)\conv\cdots \conv L(a_r,b_r))).
\eneqn
Note that $a_1+N > a_1 > a_1'$. Hence, repeating the above procedure,  we get
\eqn
&&\tF(\hd(L(a'_1,b'_1)\conv\cdots \conv L(a'_t,b'_t))) \\
&&\hskip 2em\simeq
\tF(\hd(L(a_1+k N, b_1+ k N) \conv L(a_2, b_2)\conv\cdots \conv L(a_r,b_r)))
\eneqn
for all $k \in \Z_{\ge 0}.$

Note that if $k$ is sufficiently large, then we have 
\eqn
&&\hd(L(a_1+k N, b_1+ k N) \conv L(a_2, b_2)\conv\cdots \conv L(a_r,b_r)) \\
&&\hskip2em\simeq
L(a_1+k N, b_1+ k N) \conv \hd(L(a_2, b_2)\conv\cdots \conv L(a_r,b_r)).
\eneqn
Hence it  follows that 
\eqn
\tF(S') \simeq
\tF(L(a_1+k N, b_1+ k N)) \tens \tF\bl\hd(L(a_2, b_2)\conv\cdots \conv L(a_r,b_r))\br
\eneqn
for all sufficiently large $k$. 
Hence the class of $\tF\bl\hd(L(a_2, b_2)\conv\cdots \conv L(a_r,b_r))\br$ in the Grothendieck ring $K(\CC_{B_{n}^{(1)}})$ is a zero-divisor. 
It contradicts the fact that  $K(\CC_{B_{n}^{(1)}})$   is a polynomial ring generated by the classes of fundamental representations (\cite{FR99}).
\QED

Let us denote by $\Irr(\CC^0_{{B_{n}^{(1)}}})$
 the set of isomorphism classes of simple objects in $ \CC^0_{{B_{n}^{(1)}}}$.\
The following is  our main theorem.
\Th
The functor $\tF\col\T_N\to \CC^0_{{B_{n}^{(1)}}}$ induces  a ring  isomorphism
\begin{equation*}
\phi_{\tF} \col K(\T_N)_{q=1} \isoto K(\CC^0_{{B_{n}^{(1)}}})
\end{equation*}
and a bijection between $\Irr(\T_N)_{q=1}$ and
$\Irr(\CC^0_{{B_{n}^{(1)}}})$. 
\enth

 \Proof  We know that
$\Irr(\T_N)_{q=1}$ is a basis of $K(\T_N)_{q=1}$ and
$\Irr(\CC^0_{{B_{n}^{(1)}}})$ is a basis of $K(\CC^0_{{B_{n}^{(1)}}})$.
By Proposition \ref{prop:injective}, the ring homomorphism $\phi_{\tF}$ sends 
$\Irr(\T_N)_{q=1}$ to $\Irr(\CC^0_{{B_{n}^{(1)}}})$ injectively. 
On the other hand, $\phi_{\tF}$ is surjective.
Hence we conclude that $\phi_{\tF}$ is an isomorphism and induces 
a bijection between $\Irr(\T_N)_{q=1}$ and $\Irr(\CC^0_{{B_{n}^{(1)}}})$.
\QED

\section{Relation between  quantum affine algebras of type A and type B}
\subsection{Isomorphisms between Grothendieck rings}
Recall from \cite{KKK13A, KKKO15} that for each $t=1,2$, there  is a functor $\tF^{(t)} : \T_N \To \CC^0_{A^{(t)}_{N-1}} $
which induces  a ring isomorphism
\begin{equation*}
\phi_{\tF^{(t)}} \col K(\T_N)_{q=1} \isoto K(\CC^0_{A^{(t)}_{N-1}}).
\end{equation*}
These ring  isomorphisms $\phi_{\tF^{(t)}}$ $(t=1,2)$ are  also bijective on the sets of classes of simple objects.

 By setting  
  \eq
\phi_t:=\phi_{\tF} \circ \phi_{\tF^{(t)}}^{-1} \qquad (t=1,2), \eneq 
we have
\Th\label{th:main}
There exist  ring isomorphisms
$$\phi_{t}: K(\CC^0_{A^{(t)}_{N-1}}) \isoto K(\CC^0_{{B_{n}^{(1)}}})$$
\enth
which induce bijections between the sets of 
isomorphism classes of simple objects. 

\subsection{Images of modules associated with segments}
In this subsection, we take
\eqn
&&\mathscr S_0(A^{(1)}_{2n-1}) \seteq \{(i,(-q)^{p}) \in \{1,\ldots,2n-1\} \times \cor^{\times} \,;\, p \equiv i+1 \hs{-1ex}\mod \ 2 \}, \label{eq:S0A^(1)_n} \quad \text{and}\\
&&\mathscr S_0(A^{(2)}_{2n-1}) \seteq \{( i,\pm(-q)^{p})
\in \{1,\ldots,n\} \times \cor^{\times} \,;\,  \label{eq:S0A^(2)_2n-1} 
\text{$i  \in \{1,\ldots,n\}$, $p \equiv i+1 \hs{-1ex}\mod  2$} \}. \nonumber  
\label{eq:S0A^(2)_2n} 
\eneqn

The  functors ${\tF^{(t)}}$ $(t=1,2)$ send  the simple module associated with  a segment to a fundamental representation. 
More precisely, we have
\eq \label{eq:untwisted}
\phi_{\tF^{(1)}} ([L(a,b)]) =  [V(\varpi_{b-a+1})_{(-q)^{a+b}}]
\eneq
for $a,b \in \Z$, and 
\eq  \label{eq:twisted}
(\phi_{\tF^{(2)}} \circ \phi_{\tF^{(1)}}^{-1}) ([V(\varpi_i)_{x}]) =
\begin{cases}
  [V(\varpi_i)_{x}]  & \text{for } \ 1 \le i \le n, \\
   [V(\varpi_{N-i})_{-x}] & \text{for } n+1 \le i\le 2n-1. \\
\end{cases}
\eneq
for  every $ (i,x) \in \mathscr S_0(A^{(1)}_{2n-1})$.

Hence the   homomorphisms $\phi_t$ $(t=1,2)$  send  a fundamental representation to the image of the simple module associated with  a segment under $\tF$.

Recall that $N=2n$ and 
\eqn
&&\tF(L(a,a+N-1)) \simeq \cor\quad \text{for all } \ a \in\Z, \\
&&\tF(L(a+N,b+N)) \simeq \tF(L(a,b))_{q^{2N-2}} \quad \text{for all } \ a,b \in \Z. 
\eneqn
Hence the following proposition determines $\tF(L(a,b))$ for 
all $a,b \in\Z$.

\begin{prop} \label{prop:segmentimage}
Let $0\le a \le N-1$ and $0\le b-a\le N-2$.
 Then we have
\bni
\item $\tF(L(0,b)) \simeq V(\varpi_n)_{q^{2b}}$ \quad for  \ $0 \le b \le N-2$.
\item $\tF(L(a,N-1)) \simeq V(\varpi_n)_{q^{2a+N-3}}$ \quad for  \ $1 \le a \le N-1$.
\item
$\tF(L(a,b))$ \\
$\simeq\begin{cases}
V(\varpi_{b-a+1})_{(-1)^{b-a}\qt q^{a+b-2}} & \text{for} \ 1 \le a \le b \le N-2, \ b-a+1 < n, \\
V(\varpi_n)_{q^{2b}} \hconv V(\varpi_n)_{q^{2a+N-3}}
& \text{for $1 \le a \le b \le N-2$, $\ b-a+1 \ge  n$,} \\
V(\varpi_n)_{q^{2a+N-3}} \hconv V(\varpi_n)_{q^{2b-2}}
& \text{for  $1\le a\le N-1<b$, \
$b-a+1 \le  n$,} \\
V(\varpi_{N-b+a-1)})_{(-1)^{b-a}\qt q^{a+b-3}}
& \text{for $1\le a\le N-1<b$,  \
$b-a+1 > n$.}\\
\end{cases}
$
 \end{enumerate}
\begin{proof}
(i) and (ii) follow from Lemma \ref{lem:FK0k}.

\snoi
(iii) Let $1 \le a \le b \le N-2$. Then we have a homomorphism
\eqn
\tF(L(a,b)) \tens \tF(L(b+1,N-1)) \epito \tF(L(a,N-1))\simeq V(\varpi_n)_{q^{2a+N-3}}.
\eneqn
Since 
  $V(\varpi_n)_{q^{2b}}$ is a left dual of $V(\varpi_n)_{q^{2b+N-1}}\simeq \tF(L(b+1,N-1))$,  we have
\eqn
\tF(L(a,b)) \monoto  V(\varpi_n)_{q^{2a+N-3}} \tens V(\varpi_n)_{q^{2b}}. 
\eneqn
It follows that 
\eqn
\tF(L(a,b)) \simeq  V(\varpi_n)_{q^{2b}} \hconv V(\varpi_n)_{q^{2a+N-3}}. 
\eneqn
  If $b-a+1< n$ , then  $2b  \le 2a+N-3$ and  hence by \eqref{eq:Doreynn}, we have
  \eqn 
  V(\varpi_n)_{q^{2b}} \hconv V(\varpi_n)_{q^{2a+N-3}} \simeq V(\varpi_{b-a+1})_{(-1)^{b-a}\qt q^{a+b-2}}.
  \eneqn

By a similar way,  we can obtain   the case when $1 \le a \le N-1 < b$.
\end{proof}
\end{prop}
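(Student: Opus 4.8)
The plan is to reduce the whole computation to Lemma~\ref{lem:FK0k}, the duality isomorphisms \eqref{eq:dual}, and the Dorey‑type surjections of Proposition~\ref{prop:Dorey}, using the concatenation of adjacent segments as the only combinatorial input. Throughout I would identify $\tF(L(a,b))$ with $\F(L(a,b))$ via the factorization $\F=\tF\circ\Xi\circ\Upsilon\circ\mathcal Q$, ignore all grading shifts, and use freely that $L(a,b)$ is a nonzero simple object whenever $b-a+1\le N-1$, so that $\F(L(a,b))$ is a nonzero simple $U'_q(B^{(1)}_n)$-module by Proposition~\ref{prop:images} and Corollary~\ref{cor:tFs}, and that $\F$ is an exact tensor functor. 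Parts (i) and (ii) are then exactly Lemma~\ref{lem:FK0k}.

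For part (iii) in the range $1\le a\le b\le N-2$, the key step is the epimorphism $L(a,b)\conv L(b+1,N-1)\epito L(a,N-1)$ supplied by the exact sequence in Proposition~\ref{prop:exact sequences}(v). Applying the exact tensor functor $\F$ and invoking parts (i), (ii) gives an epimorphism $\F(L(a,b))\tens V(\varpi_n)_{q^{2b+N-1}}\epito V(\varpi_n)_{q^{2a+N-3}}$. Since $\pstar=q^{N-1}$ and $n^*=n$, the left dual of $V(\varpi_n)_{q^{2b+N-1}}$ is $V(\varpi_n)_{q^{2b}}$, so by the adjunction \eqref{eq:dual} this becomes a nonzero, hence injective (as $\F(L(a,b))$ is simple), homomorphism $\F(L(a,b))\hookrightarrow V(\varpi_n)_{q^{2a+N-3}}\tens V(\varpi_n)_{q^{2b}}$. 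By Lemma~\ref{lem:commute_equiv_qa} the socle on the right is simple and equal to $V(\varpi_n)_{q^{2b}}\hconv V(\varpi_n)_{q^{2a+N-3}}$, which is the asserted value when $b-a+1\ge n$. When $b-a+1=k<n$ one has $2b\le 2a+N-3$, the pair $(q^{2b},q^{2a+N-3})$ being the spectral shift by $q^{a+b-2}$ of $(q^{k+1},q^{N-k})$; so \eqref{eq:Doreynn} identifies this head with $V(\varpi_{b-a+1})_{(-1)^{b-a}\qt q^{a+b-2}}$, using $(-1)^{k+1}=(-1)^{b-a}$.

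For part (iii) in the range $1\le a\le N-1<b$ I would split at $N-1$: Proposition~\ref{prop:exact sequences}(v) gives an epimorphism $L(a,N-1)\conv L(N,b)\epito L(a,b)$. Since $N\le b\le 2N-3$, part (i) together with the shift relation $\F(L(c+N,d+N))\simeq\F(L(c,d))_{q^{2N-2}}$ yields $\F(L(N,b))\simeq V(\varpi_n)_{q^{2b-2}}$, so applying $\F$ gives an epimorphism $V(\varpi_n)_{q^{2a+N-3}}\tens V(\varpi_n)_{q^{2b-2}}\epito \F(L(a,b))$; as $\F(L(a,b))$ is simple it is the (simple) head $V(\varpi_n)_{q^{2a+N-3}}\hconv V(\varpi_n)_{q^{2b-2}}$, the claimed value when $b-a+1\le n$. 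For $b-a+1>n$, setting $k=N-b+a-1$ one checks $1\le k\le n-1$ and that $(q^{2a+N-3},q^{2b-2})$ is the spectral shift by $q^{a+b-3}$ of $(q^{k+1},q^{N-k})$, so \eqref{eq:Doreynn} identifies the head with $V(\varpi_{N-b+a-1})_{(-1)^{b-a}\qt q^{a+b-3}}$, using that $N$ is even so $(-1)^{k+1}=(-1)^{b-a}$.

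I expect the main obstacle to be bookkeeping rather than conceptual: in each of the four subcases one must match the relevant $V(\varpi_n)_{q^\bullet}\hconv V(\varpi_n)_{q^\bullet}$ against \eqref{eq:Doreynn}, which means correctly determining which tensor factor plays which role, the correct overall spectral shift, the sign $(-1)^{k+1}$, and checking that the resulting index $k$ falls in the admissible range $[1,n-1]$ precisely under the stated length inequality. The one place needing genuine care is the adjacency–concatenation epimorphism extracted from Proposition~\ref{prop:exact sequences}(v) and keeping left versus right duals straight when applying \eqref{eq:dual}.
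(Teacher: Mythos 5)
Your proposal is correct and follows the same strategy as the paper's proof: concatenation epimorphism, adjunction with the left dual via $\pstar=q^{N-1}$ and $n^*=n$, identification of the resulting socle/head using Lemma~\ref{lem:commute_equiv_qa}, and matching against the Dorey surjection \eqref{eq:Doreynn} with the correct spectral shift and sign. The only difference is that you spell out the second range $1\le a\le N-1<b$ (splitting at $N-1$ and using the shift relation for $\F(L(N,b))$), which the paper compresses as ``by a similar way.''
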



Let us denote by $V^{(t)}(\varpi_i)_x$  the  fundamental representation corresponding to 
$(i,x) \in \mathscr S_0(A^{(t)}_{2n-1})$ ($t=1,2$).
 Combining \eqref{eq:untwisted}, \eqref{eq:twisted} and Proposition \ref{prop:segmentimage}, we get the following
\begin{corollary}
Let $1\le i \le n$ and let $p\in\Z$ satisfy $p\equiv i+1\mod2$.
\bnum
\item
If $ i-1 \le p \le 2N+i-3$, then we have 
\eqn 
&&\hs{2ex}
\phi_1([V^{(1)}(\varpi_i)_{(-q)^p}]) =
\phi_2([V^{(2)}(\varpi_i)_{(-q)^p}])\\
&&\hs{6ex}=\begin{cases}
[V(\varpi_n)_{q^{2p}}]  & \text{for }  p=i-1, \\
[V(\varpi_i)_{(-1)^{i-1}\qt q^{p-2}}] & \text{for } \ i+1  \le p  \le  2N-i-3, \ i <n, \\
[V(\varpi_n)_{q^{p+n-1}} \hconv V(\varpi_n)_{q^{p+n-2}}] & 
\text{for } \ i+1  \le p  \le  2N-i-3, \ i =n, \\
[V(\varpi_n)_{q^{2p-N-1}}] & \text{for } \ p=2N-i-1, \\
[V(\varpi_n)_{q^{p+N-i-2}} \hconv V(\varpi_n)_{q^{p+i-3}}]  & \text{for } \ 2N-i+1 \le p \le 2N+i-3. \\
\end{cases}
\eneqn
\item If $ N-i-1 \le p \le 3N-i-3$, then we have 
\eqn 
&&\hs{2ex}
\phi_1([V^{(1)}(\varpi_{N-i})_{(-q)^p}]) =
\phi_2 ([V^{(2)}(\varpi_i)_{-(-q)^p}])  \\
&&\hs{6ex}=\begin{cases}
[V(\varpi_n)_{q^{2p}}]  & \text{for }  p=N-i-1, \\
[V(\varpi_n)_{q^{p+N-i-1}}\hconv  V(\varpi_n)_{q^{p+i-2}}] & \text{for } \ N- i+1  \le p  \le  N+i-3, \\
[V(\varpi_n)_{q^{ 2p-N-1}}] & \text{for } \ p=N+i-1, \\
[V(\varpi_i)_{(-1)^{i-1}\qt q^{p-3}}] & \text{for } \ N+i+1 \le p \le 3N-i-3, \ i <n,  \\
[V(\varpi_n)_{q^{p+n-2}} \hconv V(\varpi_n)_{q^{p+n-3}}] &\text{for } \ N+i+1 \le p \le 3N-i-3, \ i =n.
\end{cases}
\eneqn
\ee 
\end{corollary}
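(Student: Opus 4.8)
The plan is to derive this Corollary purely by combining the two identities \eqref{eq:untwisted}, \eqref{eq:twisted} with Proposition~\ref{prop:segmentimage}. Since $\phi_t=\phi_{\tF}\circ\phi_{\tF^{(t)}}^{-1}$, evaluating $\phi_t$ on a fundamental representation means: first invert the segment-to-fundamental dictionary to write $\phi_{\tF^{(t)}}^{-1}$ of that fundamental representation as the class of a simple module $L(a,b)$ attached to a segment, and then read off $\tF(L(a,b))$ from Proposition~\ref{prop:segmentimage}.

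First I would treat $\phi_1$. By \eqref{eq:untwisted} the segment $(a,b)$ with $\phi_{\tF^{(1)}}([L(a,b)])=[V^{(1)}(\varpi_i)_{(-q)^p}]$ is the one given by $b-a+1=i$ and $a+b=p$, that is $a=(p-i+1)/2$, $b=(p+i-1)/2$, which are integers since $p\equiv i+1\bmod 2$; hence $\phi_1([V^{(1)}(\varpi_i)_{(-q)^p}])=[\tF(L(a,b))]$. I would then note that the hypothesis $i-1\le p\le 2N+i-3$ is exactly the condition $0\le a\le N-1$, and that $0\le b-a=i-1\le n-1\le N-2$, so that Proposition~\ref{prop:segmentimage} applies to $L(a,b)$ without any further reduction. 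Running through the cases of that proposition, converting the ranges and inequalities on $a$ (together with the auxiliary comparisons of $b$ with $N-1$ and of $b-a+1$ with $n$) into ranges on $p$, and substituting $a+b=p$, $b-a=i-1$ into the spectral parameters, yields exactly the list in part (i). For the $\phi_2$ equality in (i) I would invoke the first line of \eqref{eq:twisted}, which says $\phi_{\tF^{(2)}}^{-1}$ and $\phi_{\tF^{(1)}}^{-1}$ agree on $[V(\varpi_i)_x]$ for $1\le i\le n$, so that $\phi_2([V^{(2)}(\varpi_i)_{(-q)^p}])=\phi_1([V^{(1)}(\varpi_i)_{(-q)^p}])$.

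Part (ii) would be handled the same way, now starting from the second line of \eqref{eq:twisted}: for $1\le i\le n-1$ one has $n+1\le N-i\le 2n-1$, so $(\phi_{\tF^{(2)}}\circ\phi_{\tF^{(1)}}^{-1})([V^{(1)}(\varpi_{N-i})_{(-q)^p}])=[V^{(2)}(\varpi_i)_{-(-q)^p}]$, hence $\phi_2([V^{(2)}(\varpi_i)_{-(-q)^p}])=\phi_1([V^{(1)}(\varpi_{N-i})_{(-q)^p}])$; the borderline $i=n$ would be dealt with through the first line of \eqref{eq:twisted}, using that in type $A^{(2)}_{2n-1}$ the module $V(\varpi_n)_x$ is insensitive to $x\mapsto-x$. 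Then \eqref{eq:untwisted} gives the segment $(a',b')$ with $b'-a'+1=N-i$, $a'+b'=p$, i.e. $a'=(p-N+i+1)/2$, $b'=(p+N-i-1)/2$; the hypothesis $N-i-1\le p\le 3N-i-3$ is again exactly $0\le a'\le N-1$, and now $b'-a'+1=N-i\ge n$, with equality precisely when $i=n$. Feeding $L(a',b')$ into Proposition~\ref{prop:segmentimage}, and using $(-1)^{b'-a'}=(-1)^{N-i-1}=(-1)^{i-1}$ for the sign, produces the five cases of part (ii).

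I do not expect any conceptual obstacle: the only real work is the bookkeeping of translating the numerous subcases of Proposition~\ref{prop:segmentimage} into the subcases of the Corollary, converting $a$-ranges into $p$-ranges while correctly carrying the spectral-parameter shifts by $\pstar=q^{N-1}$, the signs $(-1)^{b-a}$, and the factor $\qt$. The one point deserving slightly more attention is the value $i=n$ in part (ii), where $N-i=n$ falls outside the range $n+1\le\,\cdot\,\le 2n-1$ to which the second case of \eqref{eq:twisted} applies, so that argument must instead be routed through the first case.
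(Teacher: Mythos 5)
Your derivation is correct and coincides with the paper's intended argument: the paper simply states that the corollary is obtained by combining \eqref{eq:untwisted}, \eqref{eq:twisted}, and Proposition~\ref{prop:segmentimage}, and you carry out exactly that inversion-and-bookkeeping. Your explicit flagging of the boundary case $i=n$ in part~(ii), which requires the identification $V^{(2)}(\varpi_n)_x\simeq V^{(2)}(\varpi_n)_{-x}$ before the first line of \eqref{eq:twisted} can be invoked, is a genuine subtlety that the paper passes over silently, and you handle it correctly.
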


Note that we have 
\eqn 
\phi_t ([V^{(t)}(\varpi_i)_{x q^{2N}}]) =  (\phi_t ([V^{(t)}(\varpi_i)_x]))_{q^{2N-2}}
\eneqn
for all $(i,x) \in \mathscr S_0(A^{(t)}_{2n-1})$ ($t=1,2$). Hence the above corollary  determines the maps
$\phi_t$ $(t=1,2)$ on the set of fundamental representations.


\begin{thebibliography}{99}
\nocite{}

\bibitem{AK} T. Akasaka and M. Kashiwara,
{\it Finite-dimensional representations of quantum affine algebras}, 
Publ. RIMS. Kyoto Univ., {\bf33} (1997), 839--867.





\bibitem{CP95} V. Chari and A. Pressley, {\em Quantum affine algebras and their representations}, in Representations of groups (Banff, AB, 1994), CMS Conf. Proc., {\bf16}, Amer. Math. Soc., Providence, RI, 1995, 59--78.



\bibitem{CP96B} \bysame,
{\em Quantum affine algebras and affine Hecke algebras},
Pacific J. Math. {\bf 174} (2) (1996), 295-326.


\bibitem{Che}  I. V. Cherednik,
{\em A new interpretation of Gelfand-Tzetlin bases},
Duke Math. J., {\bf54} (1987), 563-577.




\bibitem{FH11A}
E. Frenkel and D. Hernandez, {\em Langlands duality for finite-dimensional representations
of quantum affine algebras}, Lett. Math. Phys. \textbf{96} (2011) 217--261.

\bibitem{FR99}
E. Frenkel and N. Yu. Reshetikhin,
{\it The q-characters of representations of quantum affine algebras
and deformations of W-algebras, Recent developments in quantum affine algebras and
related topics},
Contemp. Math. {\bf248} (1999), 163--205.


\bibitem{GRV94} V. Ginzburg , N. Reshetikhin and E. Vasserot,
{\em Quantum groups and flag varieties},
A.M.S. Contemp. Math. {\bf 175} (1994), 101-130.


\bibitem{Her101}
D. Hernandez,
{\it Kirillov-Reshetikhin conjecture: the general case},
Int. Math. Res. Not. {\bf2010} (1) (2010), 149--193.



\bibitem{HL10}
D. Hernandez and B. Leclerc, {\em Cluster algebras and quantum affine algebras}, Duke Math. J. \textbf{154} (2010), 265--341




\bibitem{Kac} V. Kac,
\newblock{\it Infinite dimensional Lie algebras},
\newblock{3rd ed., Cambridge University Press,  Cambridge, 1990}.



\bibitem{KKK13A} S.-J. Kang, M. Kashiwara and  M. Kim,
\newblock{\it Symmetric quiver Hecke algebras and $R$-matrices of quantum affine algebras},
\newblock{arXiv:1304.0323 [math.RT]},  to appear in Invent. Math. 

\bibitem{KKK13B} 
\bysame, {\em Symmetric quiver Hecke algebras and R-matrices of
quantum affine algebras II}, Duke Math. J. \textbf{164} (2015), no.~8, 1549--1602.

\bibitem{KKKO14} 
S.-J. Kang, M. Kashiwara,  M. Kim  and   S.-j. Oh,
\newblock{\em Simplicity of heads and socles of tensor products},
Compos. Math. \textbf{151} (2015), no.~2, 377--396.

\bibitem{KKKO15} \bysame,
{\em Symmetric quiver Hecke algebras and R-matrices of quantum affine algebras III}, Proc. Lond. Math. Soc. \textbf{111} (2015), no. 2, 420--444.

\bibitem{KKKO16}
\bysame,
\newblock{\em  Symmetric quiver Hecke algebras and R-matrices of quantum affine algebras IV}, Selecta Math. {\bf 22} (2016), no. 4, 1987--2015.

\bibitem{KKKO15mm}
\bysame,
\newblock{\em Monoidal categorification of cluster algebras II},
\newblock{arXiv:1502.06714}.

\bibitem{KP11} S.-J. Kang and E. Park,
{\em Irreducible modules over Khovanov-Lauda-Rouquier
algebras of type $A_n$ and semistandard tableaux},
J. Algebra {\bf339} (2011), 223--251.




\bibitem{Kas02}
M. Kashiwara,
{\it On level zero representations of quantum affine algebras},
Duke. Math. J. {\bf112} (2002), 117--175.

\bibitem{KO17}
M. Kashiwara, and S.-j. Oh, \emph{Categorical relations between Langlands dual quantum affine algebras: Doubly laced types.}  arXiv:1705.07542  [math.RT].

\bibitem{KP15}
M. Kashiwara and E. Park,
{\em Affinizations and R-matrices for quiver Hecke algebras}, arXiv:1505.03241.




\bibitem{KL09}
M.~Khovanov  and  A. Lauda, \emph{A diagrammatic approach to
categorification of quantum groups
  {I}}, Represent. Theory \textbf{13} (2009), 309--347.



\bibitem{Nak041}
H. Nakajima, {\it Quiver varieties and $t$-analogue of $q$-characters of quantum affine algebras},
Annals of Math. {\bf160} (2004), 1057--1097.



\bibitem{Oh14}
S.-j. Oh, {\it The denominators of normalized R-matrices of types $A^{(2)}_{2n-1}$, $A^{(2)}_{2n}$, $B^{(1)}_n$ and $D^{(2)}_{n+1}$},
Publ. RIMS Kyoto Univ. {\bf51} (2015), 709--744.


\bibitem{R08}
R.~Rouquier, \emph{2-{K}ac-{M}oody algebras},    arXiv:0812.5023v1 [math.RT].


\bibitem{V02}
M. Vazirani, \emph{Parameterizing Hecke algebra modules: Bernstein-Zelevinsky multisegments, Kleshchev multipartitions, and crystal graphs},
Transform. Groups.  {\bf 7 }  (2002),  267--303.

\end{thebibliography}
\end{document}